\numberwithin{equation}{section}
\newtheorem{Co}{Corollary}
\newtheorem{Lem}{Lemma}
\newtheorem{Prop}{Proposition}
\newtheorem{Th}{Theorem}
\newtheorem*{prop:gen_conc}{Proposition~\ref{prop:gen_conc}}
\newtheorem*{prop:discr_conc}{Proposition~\ref{prop:discr_conc}}
\theoremstyle{remark} 
\newtheorem{As}{Assumption}
\newtheorem{Rem}{Remark}
\newenvironment{myassump}[2][]
{\begin{As}{#1}}{\end{As}}
\Crefname{As}{}{}
\crefname{As}{}{}
\def\argmax{\operatornamewithlimits{argmax}}
\def\argmin{\operatornamewithlimits{argmin}}
\renewcommand\leq\leqslant
\renewcommand\geq\geqslant
\newcommand\eps\varepsilon
\newcommand{\bgamma}{{\boldsymbol{\gamma}}}
\newcommand{\E}{\mathsf E}
\newcommand{\R}{\mathbb R}
\newcommandx{\norm}[2][1=]{\ifthenelse{\equal{#1}{}}{\left\Vert #2 \right\Vert}{\left\Vert #2 \right\Vert^{#1}}}
\newcommand{\AAA}{\mathsf{A}}
\newcommand{\BBB}{\mathsf{B}}
\newcommand{\Kxnorm}{\mathcal{K}}
\newcommand{\fone}{f^{(1)}}
\newcommand{\ftwo}{f^{(2)}}
\newcommand{\Wone}{W^{(1)}}
\newcommand{\Wtwo}{W^{(2)}}
\newcommand{\vone}{v^{(1)}}
\newcommand{\vtwo}{v^{(2)}}
\newcommand{\requ}{\sigma}
\newcommand{\diagone}{\mathsf{\Delta}^{(1)}}
\newcommand{\diagtwo}{\mathsf{\Delta}^{(2)}}
\newcommand{\lbeta}{\lfloor\beta\rfloor}
\newcommand{\A}{\mathcal A}
\newcommand{\B}{\mathcal B}
\newcommand{\D}{\mathcal D}
\newcommand{\G}{\mathcal G}
\renewcommand{\H}{\mathcal H}
\newcommand{\I}{\mathcal I}
\newcommand{\J}{\mathcal J}
\newcommand{\NN}{\mathsf{NN}}
\newcommand{\U}{\mathcal U}
\def\pstar{\mathsf{p}^*}
\def\sfp{\mathsf{p}}
\def\sfq{\mathsf{q}}
\def\Xset{\mathsf{X}}
\def\Yset{\mathsf{Y}}
\def\Wset{\mathsf{W}}
\def\Tset{\mathsf{\Theta}}
\def\Lipgen{\mathsf{L}_2}
\newcommand\sfl{\mathsf{L}}
\def\PP{{\rm P}}
\newcommand{\rmd}{\mathrm{d}}
\newcommand{\Id}{\mathrm{I}}
\newcommand{\Var}{\text{\rm Var}}
\DeclareMathOperator{\JS}{\mathrm{JS}}
\DeclareMathOperator{\KL}{\mathrm{KL}}
\begin{document}

\title{Rates of convergence for density estimation\\ with generative adversarial networks}

\author{\name Nikita Puchkin \email npuchkin@hse.ru \\
\addr HSE University, Russian Federation\\Institute for Information transmission Problems, Russian Federation
\AND
\name Sergey Samsonov \email svsamsonov@hse.ru \\
\addr HSE University, Russian Federation\\Institute for Information transmission Problems, Russian Federation
\AND
\name Denis Belomestny \email denis.belomestny@uni-due.de \\
\addr Duisburg-Essen University, Germany\\
HSE University, Russian Federation
\AND
\name Eric Moulines \email eric.moulines@polytechnique.edu \\
\addr \'{E}cole Polytechnique, France\\
Mohamed Bin Zayed University of AI, United Arab Emirates
\AND
\name Alexey Naumov \email anaumov@hse.ru \\
\addr HSE University, Russian Federation\\Steklov Mathematical Institute of Russian Academy of Sciences, Russian Federation
}

\maketitle

\begin{abstract}%
	In this work we undertake a thorough study of the non-asymptotic properties of the vanilla generative adversarial networks (GANs). We prove an oracle inequality for the Jensen-Shannon (JS) divergence between the underlying density $\pstar$ and the GAN estimate with a significantly better statistical error term compared to the previously known results. The advantage of our bound becomes clear in application to nonparametric density estimation. We show that the JS-divergence between the GAN estimate and $\pstar$ decays as fast as $(\log{n}/n)^{2\beta/(2\beta + d)}$, where $n$ is the sample size and $\beta$ determines the smoothness of $\pstar$. This rate of convergence coincides (up to logarithmic factors) with minimax optimal for the considered class of densities.
\end{abstract}

\begin{keywords}
  generative model, oracle inequality, Jensen-Shannon risk, minimax rates, nonparametric density estimation.
\end{keywords}

\section{Introduction}

Let $X_1, \dots, X_n$ be i.i.d. random elements with values in $\Xset \subseteq \R^d$ drawn from a distribution $P^*$. We assume that $P^*$ admits a density $\pstar$ with respect to a dominating measure $\mu$. The measure $\mu$ 
is not necessarily absolutely continuous with respect to the Lebesgue measure, it can be the counting measure or the Hausdorff measure on a low-dimensional manifold as well. Our goal is to estimate $\pstar$ based on a finite sample. The problem of density estimation was extensively studied in the  literature and encounters numerous approaches such as kernel (see, e.g., \citep[Section 1.2]{tsybakov2008introduction} and \citep{mcdonald2017minimax}) and k-nearest neighbors density estimators \citep{dasgupta14}, wavelet thresholding \citep{donoho96}, and aggregation  \citep{rakhlin05, rigollet06, bunea07, b16, dalalyan17}. Recently, \cite{goodfellow2014generative} have introduced a novel approach for a related problem of generative modeling called generative adversarial networks (or simply GANs). A generative adversarial network consists of a generator and a discriminator. Given a known easy-to-sample distribution on a latent space $\Yset$ with a density 
$\phi$, the generator $g: \Yset \rightarrow \Xset$ takes i.i.d. samples $Y_1, \dots, Y_n $ from \(\phi\) and produces fake ones $g(Y_1), \dots, g(Y_n)$. The goal of the discriminator $D$ is to distinguish between the real samples $X_1, \dots, X_n$ and $g(Y_1), \dots, g(Y_n)$. Usually, $D$ can be thought of as a map $\Xset \mapsto (0, 1)$, where larger values correspond to  higher confidence that an input variable is drawn from $\pstar$. In practice, both generator and discriminator usually belong to some parametric families (for example, to classes of neural networks). Let us fix positive integers $d_\G$ and $d_\D$ and some compact sets $\Wset \subset \R^{d_\G}$ and $\Tset \subset \R^{d_\D}$. In our paper we assume that
\[
	g \in \G = \{g_w : w \in \Wset\}
	\quad \text{and} \quad
	D \in \D = \{D_\theta : \theta \in \Tset\}.
\]
As a byproduct of the described generative approach, GANs also provide an implicit density estimate for $\pstar$. Indeed, if a statistician manages to find a good generator $g_w$, then the density of $g_w(Y)$ is a reasonable estimate of $\pstar$. In \citep{goodfellow2014generative}, the authors suggested to solve the following minimax problem, also called \emph{vanilla GAN}:
\begin{equation}
	\label{GAN:Goodfellow}
	\min_{w \in \Wset} \: \max_{\theta \in \Tset}\:
	\biggl\{L(w, \theta)
	:= \frac{1}{2}\E_{X \sim \pstar} \log D_\theta(X)
	+ \frac{1}{2}\E_{Y \sim \phi} \log\bigl( 1 - D_\theta (g_w(Y))\bigr)
	\biggr\}.
\end{equation}
Here and further in the paper, $\log$ stands for the natural logarithm.
The intuition behind \eqref{GAN:Goodfellow} is that if $\D$ contained all measurable functions on $\Xset$ with values in $(0,1)$, the minimax problem \eqref{GAN:Goodfellow} would reduce to (see \cite[Theorem 1]{goodfellow2014generative})
\begin{equation}
	\label{GAN: Goodfellow_distributonal}
	\min_{w \in \mathcal{\Wset}} \big[ \JS(\sfp_w, \pstar) - \log 2 \big],
\end{equation} 
where \(\JS\) is the Jensen-Shannon divergence (see \eqref{eq:JS_divergence_def} for the  definition), and $\sfp_w$ is the density of $g_w(Y)$ with $Y \sim \phi$. Unfortunately, since $\D$ is a parametric class (and, hence, it cannot contain all the measurable functions), the actual value of $\max_{\theta \in \Tset} L(w, \theta)$  differs from $(\JS(\sfp_w, \pstar) - \log 2)$. Fortunately, the gap between $\max_{\theta \in \Tset} L(w, \theta)$ and $(\JS(\sfp_w, \pstar) - \log 2)$ may be rather small for a proper class of discriminators. Since the true distribution $\pstar$ in \eqref{GAN:Goodfellow} is unknown,  we consider a plug-in estimate $\sfp_{\widehat w}$, where $\widehat w$ is a solution of the optimization problem
\begin{equation}
	\label{gan}
	\widehat w
	\in \argmin\limits_{w \in \Wset} \max\limits_{\theta \in \Tset} L_n(w, \theta), 
\end{equation}
and
\begin{equation}
	\label{L_n}
	L_n(w, \theta)
	= \frac1{2n} \sum\limits_{i=1}^n \log D_\theta(X_i) + \frac1{2n} \sum\limits_{j=1}^n \log \Big( 1 
	- D_\theta\big( g_w(Y_j) \big) \Big)
\end{equation}
is the empirical version of the functional $L(w, \theta)$ defined in \eqref{GAN:Goodfellow}.

GANs provide a flexible tool for sampling from an unknown distribution, and they have recently become extremely popular among practitioners. Using deep neural network classes $\mathcal{G}$ and $\mathcal{D}$, one can reach the state-of-the-art generative performance in many challenging tasks, including image super-resolution \citep{ledig:2017:super}, video synthesis \citep{2018:Kim:deep_video}, and many others. Various GAN formulations were later proposed by varying the divergence measure in \eqref{GAN: Goodfellow_distributonal}. For instance, f-GAN \citep{nowozin2016f} generalized vanilla GAN by minimizing a general f-divergence; Wasserstein GAN (WGAN) \citep{arjovsky2017wasserstein} considered the first-order Wasserstein (Kantorovich)  distance ($W_1$ distance); MMD-GAN \citep{dziugaite2015training} was based on the maximum mean discrepancy; energy-based GAN \citep{zhao2016energy} minimizes the total variation distance as discussed in 
\citep{arjovsky2017wasserstein}; Quadratic GAN \citep{feizi2020understanding} aimed to find the distribution minimizing the second-order Wasserstein (Kantorovich) distance.

The empirical success of GANs motivated  many researchers to analyze their theoretical properties.
For example, in \citep{biau2020some, schreuder2020statistical}, the authors carried out theoretical analysis of WGANs.
In \citep{biau2020some}, the authors obtained upper bounds for the excess risk of GANs for parametric classes (including the class of neural networks).
In contrast to \cite{biau2020some}, \cite{schreuder2020statistical} considered generative models based on $\beta$ times differentiable transformations of the $d$-dimensional unit hypercube  and derived rates of convergence of order \(O\left(n^{-\beta/d}\vee n^{-1/2}\right)\) for the corresponding $W_1$ distance.
\cite{liang2018well} used results from the empirical process theory to prove upper bounds for Sobolev GANs (i.e., when generators and discriminators belong to Sobolev classes $\mathcal W^\alpha$ and $\mathcal W^\beta$, respectively), MMD GANs, WGANs, and vanilla GANs.
In the case of Sobolev GANs, the obtained rate $n^{-(\alpha + \beta)/(2\beta + d)}\vee n^{-1/2}$ for the corresponding integral probability metric (IPM) is shown to be minimax optimal.
The research of \cite{liang2018well} was continued in the works of \cite{singh2018minimax, uppal2019, luise2020generalization, chen22, vardanyan23} where the authors studied the performance of GANs in terms of different IPM losses and the Sinkhorn divergence.
The vanilla GANs were studied in \citep{liang2018well, biau2018some, asatryan2020convenient}.
However, the rates of convergence for them in terms of Jensen-Shannon divergence are not yet fully understood.
\cite{biau2018some} and \cite{asatryan2020convenient} improved approximation terms as compared to \cite{liang2018well}, but it is not clear whether the rates obtained in \citep{biau2018some, asatryan2020convenient} are minimax optimal.
In this work, we provide a refined analysis of the theoretical properties of vanilla GANs and derive minimax optimal rates.

\paragraph*{Contributions.}
Our contributions can be summarized as follows.

\begin{itemize}[leftmargin=5mm, noitemsep]
	\item We prove (\Cref{fast_rate}) a sharp oracle inequality for the case when the classes $\G$ and $\D$ are general parametric classes, which significantly improves  the existing inequalities from the works of \cite{biau2018some} and \cite{asatryan2020convenient}.
	\item We apply the result of \Cref{fast_rate} to a nonparametric density estimation problem.
	Choosing $\G$ and $\D$ as classes of neural networks of appropriate architectures with ReQU activation functions, we  derive the rates of convergence for the estimate $\sfp_{\widehat w}$ to 
	the true density $\pstar$ in terms of the Jensen-Shannon divergence.
	Namely, we show that, with probability at least $1 
	- \delta$, it holds that
	\begin{equation}
		\label{eq:final-rate}
		\tag{\Cref{thm:requ}}
		\JS(\sfp_{\widehat w}, \pstar)
		\lesssim \left( \frac{\log n}n \right)^{\frac{2\beta}{2\beta + d}} + \frac{\log (1/\delta)}n,
	\end{equation}
	provided that the actual density \(\pstar\) is the density of a random variable 
	\(g^*(Y)\) with $Y$ being uniformly distributed on \([0, 1]^d\), for a smooth  invertible transform \(g^*\) not necessary belonging to 
	\(\G\). We also discuss that previously known bounds do not yield this rate of convergence.
    \item We show that the result of \Cref{thm:requ} is minimax optimal up to a logarithmic factor. Namely, we prove that for any estimate $\widehat\sfp$, it holds that
	\begin{equation}
		\tag{\Cref{th:lower_bound}}
		\sup\limits_{\pstar} \E \JS(\widehat\sfp, \pstar)
		\gtrsim n^{-\frac{2\beta}{2\beta + d}},
	\end{equation}
	where the supremum is taken with respect to the densities $\pstar$ satisfying the 
	same regularity assumptions as in \Cref{thm:requ}. Hence, our results imply the minimax optimality of GANs (up to logarithmic factors) in the context of nonparametric density estimation. 
\end{itemize}

\section{Preliminaries and notations}
\label{sec:notations}

\paragraph*{Kullback-Leibler and Jensen-Shannon divergences.}
Let $\Omega\subset \R^d$ be a bounded domain. For two probability measures on a 
measurable space $(\Omega, \mathcal B(\Omega))$ with Lebesgue densities $\sfp$ and $\sfq$, respectively, we define the Kullback-Leibler divergence between them as
\[
	\KL(\sfp, \sfq) =
	\begin{cases}
		\int \sfp(x) \log \big (\sfp(x)/\sfq(x)\big) \rmd \mu, \quad \text{if $\sfp \ll \sfq$},\\
		+\infty, \quad \text{otherwise}.
	\end{cases}
\]
Here and further in this paper, $\log$ stands for the natural logarithm. By $\JS(\sfp, \sfq)$, we denote the Jensen-Shannon divergence
\begin{equation}
\label{eq:JS_divergence_def}
\JS(\sfp, \sfq) = \frac12 \KL\left(\sfp, \frac{\sfp+\sfq}2 \right) + \frac12 \KL\left(\sfq, \frac{\sfp+\sfq}2 
	\right).
\end{equation}

\paragraph*{Norms.}
For a matrix $A$ and a vector $v$, we denote by $\|A\|_\infty$ and $\|v\|_\infty$  the maximal 
absolute value of entries of $A$ and $v$, respectively.
$\|A\|_0$ and $\|v\|_0$ shall stand for the number of non-zero entries of $A$ and $v$, 
respectively.
Finally, the Frobenius norm and operator norm of $A$ are denoted by $\|A\|_F$ and 
$\|A\|$, respectively, and the Euclidean norm of $v$ is denoted by $\|v\|$. For $x \in \R^d$ and $r > 0$ we write $\B(x,r) = \{y \in \R^d,\, \|y - x\| \leq r\}$. For a function $f : \Omega\to \mathbb{R}^d$, we set 
\begin{align*}
	&
	\|f\|_{L_\infty(\Omega)} = \sup_{x\in \Omega} \|f(x)\|,
	\\&
	\|f\|_{L_2(\Omega)} =  \left\{\int_{\Omega}\|f(x)\|^2\,\rmd \mu\right\}^{1/2},
\end{align*}
and
\[
	\|f\|_{L_2(\sfp, \Omega)} = \left( \int\limits_{\Omega} \|f(x)\|^2 \, \sfp(x) \, \rmd \mu \right)^{1/2}.
\]
Sometimes, we omit the domain $\Omega$ in the notations $L_\infty(\Omega)$, $L_2(\Omega)$, $L_2(\sfp, \Omega)$ and simply write $L_\infty$, $L_2$, and $L_2(\sfp)$, respectively, if there is no ambiguity.

\paragraph*{Smoothness classes.}
For any $s \in \mathbb N$, the function space $C^{s}(\Omega)$ consists of those 
functions 
over the domain $\Omega$ which have partial derivatives up to order $s$ in  $\Omega$, 
and these derivatives are bounded and continuous in $\Omega$. Formally,
\[
	\textstyle{C^s(\Omega) = \big\{ f: \Omega \to \R^m: \quad \|f\|_{C^s} := \max\limits_{ 
	|\bgamma| \leq s} \|D^{\bgamma} f\|_{L_\infty(\Omega)} < \infty \big\},} 
\]
where, for any multi-index $\bgamma = (\gamma_1,\dots,\gamma_d) \in 
\mathbb{N}_0^d$, the partial differential operator $D^{\bgamma}$ is defined as
\[
	D^{\bgamma}f_i = \frac{\partial^{|\bgamma|} f_i}{\partial x_1^{\gamma_1} \cdots 
	\partial x_d^{\gamma_d}}, \quad i \in \{1,\dots, m\}\,, \text{ and } \|D^{\bgamma} f\|_{L_\infty(\Omega)} = \max\limits_{1 \leq i \leq m} \|D^{\bgamma} f_i\|_{L_\infty(\Omega)}\,.
\]
Here we have written $|\bgamma| = \sum_{i=1}^d \gamma_i$ for the order of 
$D^{\bgamma}$.
To avoid confusion between multi-indices and scalars, we reserve the bold font for the former ones.
For the matrix of first derivatives, we use the usual notation $\nabla f = (\partial 
f_i/\partial 
x_j)$ \(i=1,\ldots,m\), $j = 1, \ldots, d$. For a function $\varphi: \R^d \mapsto \R$, $\varphi \in C^2(\Omega)$, we write $\nabla^2 \varphi(x) \in \R^{d \times d}$ for its Hessian at point $x$. For a function $f:\Omega\to\R^m$ and any positive number $0 < \delta \leq 1$, the
\emph{H\"older constant} of order $\delta$ is given by
\begin{equation}\label{beta}
[f]_{\delta}:=\max_{i \in \{1,\ldots,m\}}\sup_{x \not = y\in\Omega}\frac{|f_i(x)-f_i(y)|}{\min\{1, 
\|x-y\|\}^{\delta}}\;.
\end{equation} 
Now, for any $\alpha >0$, we set $s=\lfloor \alpha \rfloor$ and define the \emph{H\"older ball} $\H^{\alpha}(\Omega,H)$ as
%. If we let $s=\lfloor \alpha \rfloor$ 
%be the largest integer \emph{strictly less} than $\alpha$, $\H^{\alpha}(\Omega,H)$ contains all functions in $C^s(\Omega)$ with $\delta$-H\"older-continuous, $\delta = \alpha - s > 0$, 
%partial derivatives of order $s$. Formally,
\[
\textstyle{
	\H^{\alpha}(\Omega,H) = \big\{ f \in C^s(\Omega): \quad \|f\|_{\H^{\alpha}} := \max \{ 
	\|f\|_{C^s}, \ \max\limits_{|\bgamma| = s} [D^{\bgamma}f]_{\delta} \} \leq H \big\}.}
\]
Note that if \(f\in \H^{1+\beta}(\Omega,H)\) for some \(\beta>0,\) then, for any \(i \in \{1, \ldots, m\}\), $j \in \{1, \ldots, d\}$, it holds that
\[
\left|\frac{\partial f_i(x)}{\partial x_j} - \frac{\partial f_i(y)}{\partial x_j} \right| 
\leq \|f\|_{\H^{1+\beta}} \cdot \|x-y\|^{1\wedge \beta}\leq H\cdot \|x-y\|^{1\wedge \beta} 
\quad \text{for all $x, y \in \Omega$,}
\] 
since $\|f\|_{\H^{\beta_1}} \leq \|f\|_{\H^{\beta_2}} $ for any 
$\beta_2\geq \beta_1.$ We will also write $f\in \H^{\alpha}(\Omega)$ if  $f\in 
\H^{\alpha}(\Omega,H)$ for some $H<\infty.$
We also introduce a class of $\Lambda$-regular functions $\H^\alpha_\Lambda(\Omega, 
H)$, $\Lambda > 1$:
\begin{equation}
\label{eq:H_alpha_lambda_regular}
\H^\alpha_\Lambda(\Omega, H) = \left\{ f \in \H^{\alpha}(\Omega, H) : 
\Lambda^{-2}\Id_{d\times d}\preceq \nabla f(x)^\top \nabla f(x) \preceq\Lambda^2 \Id_{d\times d} \text{ for all $x \in \Omega$} \right\}\,,
\end{equation}
where for symmetric matrices $A,B \in \R^{d \times d}$ we write $A \preceq B$ if $u^\top (B-A)u \geq 0$ for any $u \in \R^d$.

\paragraph*{Neural networks.}
To give a formal definition of a neural network, we first fix an activation function 
$\sigma: \R \rightarrow \R$. For a  vector $v= (v_1,\dots,v_p) \in \R^p$,
we define the shifted activation function $\sigma_v: \R^p \rightarrow \R^p$ as 
\begin{equation*}
	\sigma_{v}(x) = \bigl(\sigma(x_1-v_1),\dots,\sigma(x_p-v_p)\bigr), \quad x = 
	(x_1,\dots,x_p) 
	\in 
	\R^p.
\end{equation*}
Given a positive integer $N$ and a vector $\A = (p_0, p_1, \dots, p_{N+1}) \in \mathbb N^{N+2}$,  a neural network of depth $N+1$ (with $N$ hidden layers) and architecture \(\A\) is  a function of the form
\begin{equation}
	\label{eq:nn}
	f: \R^{p_0} \rightarrow \R^{p_{N+1}}\,, \quad 
	f(x) = W_N \circ \sigma_{v_{N}} \circ W_{N-1} \circ \sigma_{v_{N-1}} \circ \dots \circ W_1\circ\sigma_{v_1} \circ 
	W_0 \circ x,
\end{equation}
where $W_i \in \R^{p_{i+1} \times p_i}$ are weight matrices and $v_i \in \R^{p_i}$ are shift vectors. 
The numbers $p, p_0, \dots, p_{N + 1}$ should not be confused with the density $\sfp(x)$ nor with $\pstar(x)$, which are always displayed in serif.
Next, we introduce a special subclass class of neural networks of depth $N+1$ with architecture $\A$:
\[
	\NN(N, \A) =
	\left\{ \text{$f$ of the form \eqref{eq:nn}} : \|W_0\|_\infty \vee \max\limits_{1 \leq \ell 
	\leq N} \left\{ \|W_\ell\|_\infty \vee \|v_\ell\|_\infty \right\} \leq 1\right\}.
\]
The maximum number of neurons in one layer $\|\A\|_\infty$ is called the width of the neural network. Similarly to \citep{schmidt-hieber2020}, we consider sparse neural networks assuming that only a few weights are not equal to zero. For this purpose, we introduce a class of neural networks of depth $N+1$ with architecture $\A$ and at most $s$ non-zero weights:
\[
	\NN(N, \A, s) =
	\left\{ f \in \NN(N, \A) : \|W_0\|_0 + \sum\limits_{\ell = 1}^{N} \left(\|W_\ell\|_0 + 
	\|v_\ell\|_0 \right) \leq s\right\}.
\]

\section{Theoretical properties of vanilla GANs: a general oracle inequality}

We begin with a sharp oracle inequality for general parametric classes of generators $\G$ and discriminators $\D$. Following \citep{biau2018some}, we impose the next regularity assumptions on generators and discriminators.

\begin{myassump}{AG}
	\label{assu:G}
	For all $g \in \G$, the image of the latent space $\Yset$ is a subset of $\Xset$, that is, $g(\Yset) \subseteq \Xset$. Moreover, for all $y \in \Yset$, the map $w \mapsto g_w(y)$ is Lipschitz on the parameter space $\Wset$ with a constant $\sfl_\G$. That is, for any $y \in \Yset$ and any $u, v \in \Wset$, it holds that
	\[
		\|g_u(y) - g_v(y)\| \leq \sfl_\G \|u - v\|_\infty.
	\]
\end{myassump}

\begin{myassump}{AD}
	\label{assu:D}
	The maps $x \mapsto D_\theta(x)$ and $\theta \mapsto D_\theta(x)$ are Lipschitz on the ambient space $\Xset$ and on the parameter set $\Tset$ with constants $\sfl_\Xset$ and $\sfl_\Tset$, respectively.  More precisely, for any $x, x_1, x_2 \in \Xset$ and any $\theta, \theta_1, \theta_2 \in \Tset$, the following inequalities hold:
	\[
		\left| D_{\theta}(x_1) - D_{\theta}(x_2) \right| \leq \sfl_{\Xset} \|x_1 - x_2\|
		\quad \text{and} \quad
		\left| D_{\theta_1}(x) - D_{\theta_2}(x) \right| \leq \sfl_{\Theta} \|\theta_1 - \theta_2\|_\infty.
	\]
	Moreover, there exist constants $0 < D_{\min} \leq D_{\max} < 1$ such that
	\[
		D_\theta(x) \in [D_{\min}, D_{\max}] \quad \text{for all $x\in \Xset$ and $\theta \in \Tset$}.
	\]
\end{myassump}

We would like to note that the requirement that all functions from $\D$ are bounded away from $0$ and $1$ is 
needed for the $\log D_\theta$ and $\log (1 - D_\theta)$ to be well defined. Similar conditions appear in the literature for aggregation with the Kullback-Leibler loss (for instance, in \citep{ps06, bs07, rig12, b16}). Finally, similarly to \citep{biau2018some}, we require the densities of fake random elements $\sfp_w(x)$, $w \in \Wset$ to fulfil the following property.

\begin{myassump}{A$\sfp$}
	\label{assu:p}
	For all $x \in \Xset$, the map $w \mapsto \sfp_w(x)$ is Lipschitz on $\Wset$ with a constant $\sfl_\sfp$. That is, for any $x \in \Xset$ and any $u, v \in \Wset$, we have
	\[
		|\sfp_u(x) - \sfp_v(x)| \leq \sfl_\sfp \|u - v\|_\infty.
	\]
\end{myassump}

Under Assumptions \ref{assu:G}, \ref{assu:D}, and \ref{assu:p}, we establish the following oracle inequality for the GAN estimate \eqref{gan}.

\begin{Th}
\label{fast_rate}
Assume \ref{assu:G}, \ref{assu:D}, and \ref{assu:p}. Let $\Wset \subseteq [-1, 1]^{d_\G}$ and $\Tset \subseteq [-1, 1]^{d_\D}$. Then, for any $\delta \in (0, 1)$, with probability at least $1 - \delta$, it holds that
	\begin{align}
		\label{eq:th1_bound}
		\JS(\sfp_{\widehat w}, \pstar) - \Delta_\G - \Delta_\D 
		&\notag
		\lesssim \sqrt{ \frac{(\Delta_\G + \Delta_\D)\big[(d_\G + d_\D) \log(2(\sfl_\G \sfl_\Xset \vee \sfl_\Tset \vee \sfl_\sfp \vee 1)n) + \log(8/\delta)\big]}{n}}
		\\&\quad
		+ C_{\ref{eq:c_d_min_d_max}} \cdot \frac{(d_\G + d_\D) \log(2(\sfl_\G \sfl_\Xset \vee \sfl_\Tset \vee \sfl_\sfp \vee 1)n) + \log(8/\delta)}{n}
	\end{align}
	where
	\[
		\Delta_{\G} = \min\limits_{w \in \Wset} \JS(\sfp_w, \pstar),
		\quad
		\Delta_{\D} = \max\limits_{w \in \Wset} \min\limits_{\theta \in \Tset} [\JS(\sfp_w, \pstar) - \log 2 - L(w, \theta)],
	\]
    and
    \begin{equation}
        \label{eq:c_d_min_d_max}
        C_{\ref{eq:c_d_min_d_max}} = \left( \frac{\log(1 / D_{\min})}{D_{\min}^2} + \frac{\log\big(1 / (1 - D_{\max} \big)}{(1 - D_{\max})^2} \right).
    \end{equation}
	Here $\lesssim$ stands for inequality up to an absolute multiplicative constant.
\end{Th}

\begin{Rem}
    Let us recall that we assumed the number of fake samples $m$ equal to the sample size $n$. In general, if $m \neq n$, $n$ should be replaced by $(m \land n)$ in \eqref{eq:th1_bound}. This does not affect the bound much, because usually $m \geq n$.
\end{Rem}

Theoretical properties of vanilla GANs were studied in the works of \cite{liang2018well, biau2018some, asatryan2020convenient}.
The results of \cite{asatryan2020convenient} mainly concern the case of highly smooth generators, so we postpone a comparison with their rates of convergence, and we will return to it after \Cref{thm:requ}.
To our knowledge, the first upper bound on the Jensen-Shannon divergence between the true density $\pstar$ and the vanilla GAN estimate $\sfp_{\widehat w}$ was obtained in \citep[Theorem 13]{liang2018well}.
In \citep{biau2018some}, the authors significantly improved the approximation terms in the oracle inequality of \citep[Theorem 13]{liang2018well}.
The closest result to our Theorem \ref{fast_rate} in the literature is \citep[Theorem 4.1]{biau2018some}, so let us focus on the comparison of these two results.
First, in \citep{biau2018some} the authors assumed that the true density $\pstar$ is bounded away from $0$ and $+\infty$ on its support and that $\sfp_w$ is uniformly bounded over all $w \in \Wset$, while we avoid such requirements in our analysis. Second, the oracle inequality for the JS risk of the vanilla GAN estimate, established in \citep{biau2018some} under similar assumptions is weaker than the bound from \Cref{fast_rate}. The authors of the work \citep{biau2018some} proved that
\begin{equation}
\label{eq:biau_th41}
    \E \JS(\sfp_{\widehat w}, \pstar) - \Delta_\G - \Delta_\D \lesssim \sqrt{\frac{d_\G + d_\D}{n}}.
\end{equation}
One can also use McDiarmid's inequality (see, e.g. \citep[Corollary 
4]{boucheron04concentration}) to transform the in-expectation guarantee \eqref{eq:biau_th41} into a large deviation bound on $\JS(\sfp_{\widehat w}, \pstar)$ of the form
\begin{equation}
    \label{eq:biau_bound}
    \JS(\sfp_{\widehat w}, \pstar) - \Delta_\G - \Delta_\D
    \lesssim \sqrt{\frac{d_\G + d_\D}{n}} + \sqrt{\frac{\log(1/\delta)}n},
\end{equation}
which holds with probability at least $1 - \delta$.
If the classes $\G,$ $\D$  cannot approximate $g^*$  and the respective optimal discriminator with high accuracy, then $\Delta_\G$ and $\Delta_\D$  are of order $1$. In this case, our rates show no improvements over \citep{biau2018some}.
However, in practice one uses  rather expressive classes of deep neural networks for $\G$ and $\D$, and the bound \eqref{eq:th1_bound} can be significantly better than \eqref{eq:biau_bound}. In Section \ref{sec:example}, we give an example parametric families $\G,\D$ such that  $\Delta_\G$ and $\Delta_\D$ tend to $0$ polynomially fast as $n$ goes to $\infty$. Finally, in contrast to \citep{biau2018some}, we specify the dependence of the rate on the constants $\sfl_\G$ and $\sfl_\Tset$, which may be large, especially in the case of wide and deep networks.

To get further insights of the result of \Cref{fast_rate}, let us elaborate on the properties of $\Delta_\D$. 
The next lemma shows that $\Delta_\D$ exhibits quadratic behaviour and its upper bound is closely related to approximation properties of the class of discriminators considered.

\begin{Lem}
	\label{lem:Delta_D_lower_bound}
	Under Assumption \eqref{assu:D}, for any $w \in \Wset$ and any $\theta \in \Tset$, it holds that
	\[
		\JS(\sfp_w, \pstar) - \log 2 - L(w, \theta)
		\geq \left\| \frac{\pstar}{\pstar + \sfp_w} - D_\theta \right\|_{L_2(\pstar + \sfp_w)}^2
	\]
    and
    \begin{equation}
        \label{eq:delta_D_upper_bound}
        \JS(\sfp_w, \pstar) - \log 2 - L(w, \theta)
		\leq \frac{C_{\ref{eq:c_delta}}^2}{(C_{\ref{eq:c_delta}} - 1)^2 D_{\min}(1 - D_{\max})} \left\| \frac{\pstar}{\pstar + \sfp_w} - D_\theta \right\|_{L_2(\pstar + \sfp_w)}^2,
    \end{equation}
    where
    \begin{equation}
        \label{eq:c_delta}
        C_{\ref{eq:c_delta}} = 1 + \sqrt{\frac{D_{\min}}{(1 - D_{\min}) \log(1 / (1 - D_{\max})} \wedge \frac{1 - D_{\max}}{D_{\max} \log(1 / D_{\min})}}.
    \end{equation}
\end{Lem}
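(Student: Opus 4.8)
The plan is to rewrite $\JS(\sfp_w,\pstar)-\log 2 - L(w,\theta)$ as an $(\pstar+\sfp_w)$-weighted integral of Bernoulli Kullback--Leibler divergences, and then apply Pinsker's inequality for the lower bound and a reverse ($\chi^2$-based) estimate for the upper bound. First I would note that for $\mu$-almost every $x$ the map $t\mapsto\pstar(x)\log t+\sfp_w(x)\log(1-t)$ is maximized over $(0,1)$ at the oracle discriminator $D_w^*(x):=\pstar(x)/(\pstar(x)+\sfp_w(x))$, and that substituting $D_w^*$ into $L(w,\cdot)$ and regrouping the two logarithms around $(\pstar+\sfp_w)/2$ yields the classical identity $L(w,D_w^*)=\JS(\sfp_w,\pstar)-\log 2$ (a one-line computation using $\int\pstar\,\rmd\mu=\int\sfp_w\,\rmd\mu=1$ and the definition of $\JS$ in \eqref{eq:JS_divergence_def}). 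Hence, using $\pstar=D_w^*(\pstar+\sfp_w)$ and $\sfp_w=(1-D_w^*)(\pstar+\sfp_w)$ $\mu$-a.e.,
\[
  \JS(\sfp_w,\pstar)-\log 2-L(w,\theta)
  = L(w,D_w^*)-L(w,D_\theta)
  = \frac12\int(\pstar+\sfp_w)(x)\,\mathrm{kl}\!\bigl(D_w^*(x),D_\theta(x)\bigr)\,\rmd\mu,
\]
where $\mathrm{kl}(a,b):=a\log\frac{a}{b}+(1-a)\log\frac{1-a}{1-b}=\KL\bigl(\mathrm{Ber}(a)\,\|\,\mathrm{Ber}(b)\bigr)$; since $\mathrm{kl}\geq 0$ pointwise and the left-hand side is finite, all these manipulations are legitimate.

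The lower bound then follows by applying Pinsker's inequality for Bernoulli laws, $\mathrm{kl}(a,b)\geq 2(a-b)^2$, pointwise and integrating against $\pstar+\sfp_w$:
\[
  \JS(\sfp_w,\pstar)-\log 2-L(w,\theta)\;\geq\;\int(\pstar+\sfp_w)\bigl(D_w^*-D_\theta\bigr)^2\,\rmd\mu
  \;=\;\Bigl\|\tfrac{\pstar}{\pstar+\sfp_w}-D_\theta\Bigr\|_{L_2(\pstar+\sfp_w)}^2,
\]
which is exactly the claimed inequality.

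For the upper bound the only genuine obstacle is that $D_w^*=\pstar/(\pstar+\sfp_w)$ need not be bounded away from $0$ and $1$ (the densities $\pstar$ or $\sfp_w$ may vanish), so a symmetric reverse-Pinsker bound is unavailable; what rescues us is Assumption~\ref{assu:D}, which confines the \emph{second} argument $D_\theta(x)$ to $[D_{\min},D_{\max}]\subset(0,1)$. Using $\KL(P\|Q)\leq\chi^2(P\|Q)$ (immediate from $\log t\leq t-1$), for Bernoulli laws this reads $\mathrm{kl}(a,b)\leq(a-b)^2/(b(1-b))$ for every $b\in(0,1)$ and every $a\in[0,1]$; since $b\mapsto b(1-b)$ attains its minimum over $[D_{\min},D_{\max}]$ at an endpoint and $D_{\min}(1-D_{\min})\wedge D_{\max}(1-D_{\max})\geq D_{\min}(1-D_{\max})$, this gives $\mathrm{kl}(D_w^*(x),D_\theta(x))\leq(D_w^*(x)-D_\theta(x))^2/\bigl(D_{\min}(1-D_{\max})\bigr)$ $\mu$-a.e., whence
\[
  \JS(\sfp_w,\pstar)-\log 2-L(w,\theta)\;\leq\;\frac{1}{2D_{\min}(1-D_{\max})}\Bigl\|\tfrac{\pstar}{\pstar+\sfp_w}-D_\theta\Bigr\|_{L_2(\pstar+\sfp_w)}^2 .
\]
Since $C_{\ref{eq:c_delta}}^{2}/(C_{\ref{eq:c_delta}}-1)^{2}\geq 1\geq\tfrac12$, this already implies \eqref{eq:delta_D_upper_bound}; the sharper constant \eqref{eq:c_delta} can be recovered by a more careful pointwise estimate of $\mathrm{kl}(a,b)/(a-b)^2$ over $a\in[0,1]$, $b\in[D_{\min},D_{\max}]$ --- for instance by splitting according to whether $|a-b|$ exceeds a threshold $t_0$, using the Taylor identity $\mathrm{kl}(a,b)=(a-b)^2/(2\xi(1-\xi))$ with $\xi$ between $a$ and $b$ when $|a-b|\leq t_0$, and the convexity of $a\mapsto\mathrm{kl}(a,b)$ together with $\mathrm{kl}(0,b)=\log\frac{1}{1-b}$, $\mathrm{kl}(1,b)=\log\frac{1}{b}$ when $|a-b|>t_0$, and then optimizing over $t_0$. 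The remaining work (the $\mu$-a.e.\ substitutions, measurability, and the one-variable calculus for the threshold optimization) is routine.
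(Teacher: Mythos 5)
Your proposal is correct, and for the upper bound it takes a genuinely different route from the paper. The core decomposition --- writing $\JS(\sfp_w,\pstar)-\log 2-L(w,\theta)=\int \mathrm{kl}\bigl(D^*,D_\theta\bigr)\,\tfrac{\pstar+\sfp_w}{2}\,\rmd\mu$ with $D^*=\pstar/(\pstar+\sfp_w)$ --- is exactly the paper's (its function $h_a(v)$ is $\mathrm{kl}(a+v,a)$), and your lower bound via Bernoulli Pinsker coincides with the paper's argument, which derives $h_a(v)\geq 2v^2$ from $h_a''\geq 4$. Where you diverge is \eqref{eq:delta_D_upper_bound}: the paper proves the pointwise bound $h_a(v)\leq C_a^2v^2/\bigl((C_a-1)^2a(1-a)\bigr)$ by a case analysis (a second-derivative bound for $|v|$ below the thresholds $a/C_a$ and $(1-a)/C_a$, and monotonicity together with the endpoint values $\log\tfrac1{1-a}$, $\log\tfrac1a$ and the definition of $C_a$ outside), then uses $C_a\geq C_{\ref{eq:c_delta}}$ and $a(1-a)\geq D_{\min}(1-D_{\max})$. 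You instead use $\KL\leq\chi^2$, i.e. $\mathrm{kl}(a,b)\leq (a-b)^2/\bigl(b(1-b)\bigr)$ for all $a\in[0,1]$ and $b\in(0,1)$, which exploits only that the \emph{second} argument $D_\theta$ lies in $[D_{\min},D_{\max}]$ and yields, after integration, the constant $1/\bigl(2D_{\min}(1-D_{\max})\bigr)$. Since $C_{\ref{eq:c_delta}}>1$ implies $C_{\ref{eq:c_delta}}^2/(C_{\ref{eq:c_delta}}-1)^2>1$, your constant is in fact strictly smaller than the one in \eqref{eq:delta_D_upper_bound}, so the lemma follows a fortiori; your argument is both shorter and, contrary to your closing remark, sharper --- the threshold-splitting you sketch at the end (which is essentially the paper's own proof) is not needed.
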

The proof of \Cref{lem:Delta_D_lower_bound} is deferred to \Cref{sec:Delta_D_lower_bound_proof}.
\Cref{lem:Delta_D_lower_bound} plays a key role in derivation of faster rates of convergence. It shows that, for each $w \in \Wset$, $L(w, \theta)$ enjoys a similar curvature as the squared loss. This fact remained unnoticed in the literature. Though it is well known that $\JS(\sfp_w, \pstar)$ has a quadratic behaviour with respect to $\sfp_w$ (see, for instance, our \Cref{lem:js_reg} below), this fact alone is not enough to derive the uniform Bernstein-type inequality \eqref{eq:th1_bound}. Only the combination of \Cref{lem:Delta_D_lower_bound} and \Cref{lem:js_reg} leads us to a new, significantly better result.
In particular, if for any $w \in \Wset$ there exists $\theta(w) \in \Tset$ such that $\|D_{\theta(w)} - \pstar / (\pstar + \sfp_w) \|_{L_2(\pstar + \sfp_w)} \leq \eps$, then $\Delta_\D \lesssim \eps^2$. Hence, in this case, \Cref{fast_rate} and the Cauchy-Schwarz inequality immediately yield that
\[
	\JS(\sfp_{\widehat w}, \pstar)-\Delta_\G 
	\lesssim  \eps^2 + \frac{d_\G + d_\D + \log(1/\delta)}{n}
\]
with probability at least $1 - \delta$.
In \citep{biau2018some}, the authors could not exploit the quadratic behaviour of $(\JS(\sfp_w, \pstar) - \log 2 - L(w, \theta))$ properly, and they only proved that
\[
	\JS(\sfp_{\widehat w}, \pstar) - \Delta_\G
	\lesssim \eps^2 + \sqrt{\frac{d_\G + d_\D}{n}} + \sqrt{\frac{\log(1/\delta)}n}
\]
under more restrictive assumptions.
Finally, we would like to note that, in contrary to the remark in \cite[Section 10.1]{singh2018nonparametric}, the oracle inequality in Theorem 1 does not require the density $\pstar$ to be bounded away from zero.

\section{Example: deep nonparametric density estimation}
\label{sec:example}

The goal of this section is to show that GAN estimates achieve minimax rates of convergence in the problem of nonparametric density estimation. From now on, we assume that $\Xset = \Yset = [0, 1]^d$, $\mu$ is the Lebesgue measure in $\R^d$, and the generators $g_w \in \G$ are non-degenerate maps, so the density of the fake samples with respect to $\mu$ is defined correctly. In this setup, if a latent random element $Y$ is drawn according to the density $\phi$ supported on $[0, 1]^d$, then the corresponding density of $g_w(Y)$ is given by
\begin{equation}
	\label{eq:pg}
	\sfp_w(x)=|\det[\nabla g_w(g_w^{-1}(x))]|^{-1} \phi(g_w^{-1}(x)),
	\quad x \in [0, 1]^d.
\end{equation}
For the ease of exposition, we assume that $Y$ is distributed uniformly on $[0, 1]^d$, so that \eqref{eq:pg} simplifies to
\begin{equation}
	\label{eq:pg_uniform}
	\sfp_w(x)=|\det[\nabla g_w(g_w^{-1}(x))]|^{-1},
	\quad x \in [0, 1]^d.
\end{equation}
We also impose a structural assumption on the underlying density $\pstar$, assuming that it is the density of a random element $g^*(Y)$ where $g^* : [0, 1]^d \rightarrow [0, 1]^d$ is a smooth regular map and, as before, $Y$ has a uniform distribution on $[0, 1]^d$.
\begin{myassump}{A$\pstar$}
	\label{assu:AP}
	There exist constants $\beta > 2$, $H^* > 0$, and $\Lambda > 1$ such that $\pstar$ is 
	of the form 
	\[
		\pstar(x) = |\det[\nabla g^*((g^*)^{-1}(x))]|^{-1}, \quad 
		x \in [0, 1]^d,	
	\]
	with $g^* \in \H^{1+\beta}_\Lambda([0, 1]^d, H^*)$.
\end{myassump}

In fact, Assumption \Cref{assu:AP} is not very restrictive and allows for a quite large class of densities.
The celebrated Brenier's theorem \citep{brenier91} implies that, for any density $\sfp$ with a finite second moment, there exists a convex almost everywhere differentiable function $\varphi$ such that $\nabla\varphi(Y) \sim \sfp$ where $Y \sim \U([0,1]^d)$.  
The Caffarelli's regularity theory \citep{caf91, caf92a, caf92b, caf96} extends the Brenier's result in the following way.
If $\sfp$ is bounded away from zero and infinity, $\Omega = \text{supp}(\sfp)$ is convex, and $\sfp$ is in $C^\beta(\text{Int}(\Omega))$, then the potential $\varphi$ is in $C^{\beta + 2}$.

\begin{Rem}
	\Cref{lem:pmin_pmax} yields that $\pstar$ is bounded away from zero and infinity in the considered model. This is a so-called strong density assumption (see, e.g., \citep[Definition 2.2]{audibert07}), widely used in statistics.
\end{Rem}

When applying GANs to the problem of nonparametric density estimation, we shall take $\G$ and $\D$ to be classes of neural networks with ReQU (rectified quadratic unit) activation functions: 
\[
	\sigma^{\mathsf{ReQU}}(x) = (x \vee 0)^2.
\]
While rectified linear unit (ReLU), defined as 
\[
	\sigma^{\mathsf{ReLU}}(x) = x \vee 0,
\]
is the most common choice for the activation functions in neural networks, it is not suitable for our purposes.
The reason is that we want to use neural networks as generators.
The density of the fake random elements $g_w(Y_1), \dots, g_w(Y_n)$ is given by \eqref{eq:pg_uniform} 
and, to enforce differentiability, we use the ReQU activation function.
Besides, a recent result of \citep{belomestny22} on approximation properties of neural networks with ReQU activations can be used to bound $\Delta_\G$ and $\Delta_\D$ from \Cref{fast_rate}. Since the activation function is fixed, we will  write $\sigma(x)$, instead of $\sigma^{\mathsf{ReQU}}(x)$. In this section, we impose the following assumptions on the classes of generators and discriminators.

\begin{myassump}{AG'}
	\label{assu:G'}
	Fix $N_\G, d_\G \in \mathbb N$, and an architecture $\A_\G \in \mathbb N^{N_\G+2}$ 
	with the first and the last component equal to $d$. There are constants $H_{\G} > 0$ and $\Lambda_\G > 1$ such that
	\[
		\G = \G(\Lambda_\G, H_\G, N_\G, \A_\G, d_\G) = \H_{\Lambda_\G}^2([0, 1]^d, H_\G) \cap 
		\NN(N_\G, \A_\G, d_\G).
	\]
	Besides, $g_w([0, 1]^d) \subseteq [0, 1]^d$ for all $g_w \in \G$.
\end{myassump}

\begin{myassump}{AD'}
	\label{assu:D'}
	Fix $N_\D, d_\D \in \mathbb N$, and an architecture $\A_\D \in \mathbb N^{N_\D+2}$ 
	with the first and the last components equal to $d$ and $1$, respectively.
	There are constants $H_{\D} > 0$ and $0 < D_{\min} \leq D_{\max} < 1$ such that
	\[
		\D  = \D(D_{\min}, D_{\max}, H_\D, N_\D, \A_\D, d_\D) = \H^1([0, 1]^d, H_\D) 
		\cap \NN(N_\D, \A_\D, d_\D),
	\]
    and each $D \in \D$ satisfies
	\[
		D(x) \in [D_{\min}, D_{\max}] \subset [0,1] \quad \text{for all $x\in [0, 1]^d$}.
	\]
\end{myassump}
According to Assumption \ref{assu:G'} and the definition of the class $\NN(N_\G, \A_\G, d_\G)$, the generators are parametrized by vectors with at most $d_\G$ components taking values in $[-1, 1]$. Hence, in the context of Section \ref{sec:example}, we have $\Wset = [-1, 1]^{d_\G}$. Similarly, we take $\Tset = [-1, 1]^{d_\D}$.
Before applying \Cref{fast_rate} to the case of parametric families of neural networks, we first check that the conditions of \Cref{fast_rate} are fulfilled. 

\subsection{Towards the rates of convergence: verifying the conditions of \Cref{fast_rate}}
\label{sec:verifying}

Let us start with Assumptions \ref{assu:G} and \ref{assu:D}. To this end we show that the maps $w \mapsto g_w(y)$ and $\theta \mapsto D_\theta(x)$ are Lipschitz on $[-1, 1]^{d_\G}$ and $[-1, 1]^{d_\D}$, respectively.

\begin{Lem}
	\label{Lem:inf_norm_bound}
	Let $N \in \mathbb{N}$ and fix an architecture $\A = (p_0, p_1, \dots, p_{N+1}) \in \mathbb{N}^{N+2}$.
	Let the matrices $\Wone_i, \Wtwo_i \in [-1, 1]^{p_{i+1} \times p_i}$, $0 \leq i \leq N$, and the vectors $\vone_i, \vtwo_i \in [-1, 1]^{p_i}$, $1 \leq i \leq N$, be such that
	\[
		\norm{\Wone_{i} - \Wtwo_{i}}_{\infty} \leq \eps
		\quad \text{for all $i \in \{0,\dots,L\}$}
	\]
	and
	\[
		\norm{\vone_{i} - \vtwo_{i}}_{\infty} \leq \eps
		\quad \text{for all $i \in \{1, \dots, L\}$.}	
	\]
	Then the neural networks
	\[
		\fone(x) = \Wone_N \circ \requ_{\vone_{N}} \circ \Wone_{N-1} \circ \requ_{\vone_{N-1}} \circ \dots \circ \requ_{\vone_1} \circ \Wone_0 \circ x,
	\]
	\[
		\ftwo(x) = \Wtwo_N \circ \requ_{\vtwo_{N}} \circ \Wtwo_{N-1} \circ \requ_{\vtwo_{N-1}} \circ \dots \circ \requ_{\vtwo_1} \circ \Wtwo_0 \circ x
	\]
	satisfy the inequality
	\begin{equation}
		\label{eq:inf_norm_bound}
		\norm{\fone(x) - \ftwo(x)}_{\infty} \leq \eps (N+1) 2^{N} \prod_{\ell=0}^{N}(p_{\ell}+1)^{2^{N}}
		\quad \text{for all $x \in [0, 1]^d$.}
	\end{equation}
\end{Lem}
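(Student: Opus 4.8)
The plan is to push bounds through the $N+1$ layers of the two networks by a double induction. Fix $x\in[0,1]^d$ and, for $j\in\{1,2\}$, set $a_0^{(j)} = W_0^{(j)} x$ and, for $\ell=1,\dots,N$, $b_\ell^{(j)} = \requ_{v_\ell^{(j)}}\bigl(a_{\ell-1}^{(j)}\bigr)$ and $a_\ell^{(j)} = W_\ell^{(j)} b_\ell^{(j)}$, so that $\fone(x) = a_N^{(1)}$ and $\ftwo(x) = a_N^{(2)}$. Write $P_\ell := \prod_{k=0}^{\ell}(p_k+1)$. First I would control the magnitudes: since $\|x\|_\infty\le 1$ and all weights and shifts lie in $[-1,1]$, we have $\|a_0^{(j)}\|_\infty \le p_0$, and the elementary bounds $\|W_\ell^{(j)} u\|_\infty \le p_\ell\|u\|_\infty$ and $\|\requ_{v}(u)\|_\infty \le (\|u\|_\infty+1)^2$ give, by induction on $\ell$,
\[
	\alpha_\ell := 1 + \max_{j\in\{1,2\}}\|a_\ell^{(j)}\|_\infty \le P_\ell^{2^{\ell}}, \qquad \max_{j\in\{1,2\}}\|b_{\ell+1}^{(j)}\|_\infty \le \alpha_\ell^2.
\]
Here the exponent doubles with each layer precisely because ReQU squares its argument.

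Next I would track the discrepancy $\delta_\ell := \|a_\ell^{(1)} - a_\ell^{(2)}\|_\infty$. The one tool needed beyond the above is the pointwise estimate
\[
	|\requ(s)-\requ(t)| = \bigl|(s\vee 0)-(t\vee 0)\bigr|\cdot\bigl|(s\vee 0)+(t\vee 0)\bigr| \le \bigl(|s|+|t|\bigr)\,|s-t|,
\]
which holds because $u\mapsto u\vee 0$ is $1$-Lipschitz and plays the role of a Lipschitz bound for ReQU on the bounded region where the pre-activations live. Applying it coordinatewise to $b_{\ell+1}^{(1)}-b_{\ell+1}^{(2)}$ (with $s,t$ the corresponding coordinate arguments, so that $|s|+|t|\le 2\alpha_\ell$ and $|s-t|\le \delta_\ell+\eps$), and then splitting $W_{\ell+1}^{(1)} b_{\ell+1}^{(1)} - W_{\ell+1}^{(2)} b_{\ell+1}^{(2)} = W_{\ell+1}^{(1)}\bigl(b_{\ell+1}^{(1)}-b_{\ell+1}^{(2)}\bigr) + \bigl(W_{\ell+1}^{(1)}-W_{\ell+1}^{(2)}\bigr)b_{\ell+1}^{(2)}$, I get the recursion
\[
	\delta_0 \le p_0\eps, \qquad \delta_{\ell+1} \le p_{\ell+1}\bigl[\,2\alpha_\ell(\delta_\ell+\eps) + \eps\,\alpha_\ell^2\,\bigr].
\]
Then I would prove by induction on $\ell$ the bound $\delta_\ell \le \eps\,(\ell+1)\,2^{\ell}\,P_\ell^{2^{\ell}}$. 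Feeding the hypothesis together with $\alpha_\ell\le P_\ell^{2^\ell}$ and $\delta_\ell+\eps\le \eps(\ell+2)2^\ell P_\ell^{2^\ell}$ into the recursion yields $\delta_{\ell+1}\le \eps\,p_{\ell+1}\,P_\ell^{2^{\ell+1}}\bigl[(\ell+2)2^{\ell+1}+1\bigr]$; the elementary inequality $p_{\ell+1}\bigl[(\ell+2)2^{\ell+1}+1\bigr]\le (\ell+2)2^{\ell+1}(p_{\ell+1}+1)^{2^{\ell+1}}$ (a consequence of $(p_{\ell+1}+1)^{2^{\ell+1}}\ge 2p_{\ell+1}$) together with the identity $P_\ell^{2^{\ell+1}}(p_{\ell+1}+1)^{2^{\ell+1}}=P_{\ell+1}^{2^{\ell+1}}$ closes the step. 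Taking $\ell=N$, using $\delta_N = \|\fone(x)-\ftwo(x)\|_\infty$ and $\prod_{\ell=0}^N(p_\ell+1)^{2^N} = P_N^{2^N}$, gives exactly \eqref{eq:inf_norm_bound}, and the bound holds for every $x\in[0,1]^d$ since $x$ was arbitrary.

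The argument is essentially bookkeeping, so there is no deep obstacle; the points demanding care are that ReQU is \emph{not} globally Lipschitz, which is why the magnitude bounds $\alpha_\ell$ are indispensable and must be carried into the discrepancy recursion, and that the inductive hypothesis for $\delta_\ell$ must be given exactly the right shape — the linear prefactor $(\ell+1)$, the geometric factor $2^\ell$, and the base $P_\ell$ raised to the power $2^\ell$ — so that the iterated squaring telescopes to the exponent $2^N$ rather than to something larger, and so that the per-layer additive $\eps$-contributions accumulate into the factor $N+1$.
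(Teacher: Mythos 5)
Your argument is correct: I checked the magnitude recursion ($\alpha_{\ell+1}\le(p_{\ell+1}+1)\alpha_\ell^2$, giving $\alpha_\ell\le P_\ell^{2^\ell}$), the ReQU difference estimate $|\requ(s)-\requ(t)|\le(|s|+|t|)\,|s-t|$, the discrepancy recursion $\delta_{\ell+1}\le p_{\ell+1}\bigl[2\alpha_\ell(\delta_\ell+\eps)+\eps\,\alpha_\ell^2\bigr]$, and the closing inequality $p_{\ell+1}\bigl[(\ell+2)2^{\ell+1}+1\bigr]\le(\ell+2)2^{\ell+1}(p_{\ell+1}+1)^{2^{\ell+1}}$; all steps hold and the induction lands exactly on the stated bound at $\ell=N$. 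Your route differs from the paper's, though. The paper follows the Schmidt--Hieber-style hybrid decomposition: it writes $\fone(x)-\ftwo(x)$ as a telescoping sum over $k$ of networks whose parameters agree except in layer $k$, and then bounds each summand by combining a Lipschitz estimate for the downstream subnetwork $\AAA^{(1)}_{k+1}$ with a sup-norm estimate for the upstream output $\BBB^{(2)}_{k-1}(x)$, both supplied by an auxiliary lemma (\Cref{lem:infty_norm_a_plus}). You instead propagate a single forward induction that carries the magnitude bound $\alpha_\ell$ and the discrepancy $\delta_\ell$ together through the layers, which makes the proof self-contained and dispenses with the separate Lipschitz lemma for the tail maps; the price is that your argument is specific to this statement, whereas the paper's auxiliary lemma on $\AAA_{j,k}$ and $\BBB_{k,i}$ is reused verbatim in the proof of the derivative analogue (\Cref{Lem:inf_norm_bound_derivative}), so the telescoping structure amortizes better across the two results. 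Both approaches handle the essential difficulty the same way — ReQU is not globally Lipschitz, so local sup-norm control of the pre-activations must enter the error propagation — and both yield the identical constant $\eps(N+1)2^N\prod_{\ell=0}^N(p_\ell+1)^{2^N}$.
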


The proof of \Cref{Lem:inf_norm_bound} is moved to \Cref{sec:inf_norm_bound}. \Cref{Lem:inf_norm_bound} and the fact that $\D \subset \H^1([0, 1]^d, H_\D)$ (due to Assumption \ref{assu:D'}) imply that Assumptions \ref{assu:G} and \ref{assu:D} are fulfilled with the constants
\[
	\sfl_\G = (N_\G+1) 2^{N_\G} \prod\limits_{p \in \A_\G} (p + 1)^{2^{N_\G}},
	\quad
	\sfl_\Tset = (N_\D+1) 2^{N_\D} \prod\limits_{p \in \A_\D} (p + 1)^{2^{N_\D}},
	\quad \text{and} \quad
	\sfl_\Xset = H_\D.
\]
It only remains to check that Assumption \ref{assu:p} is satisfied as well. We do it in two steps. First, we show that if two generators are close to each other with respect to the $\H^1$-norm, then the corresponding densities are close as well.

\begin{Lem}
	\label{lem:pf-pg}
	Assume \Cref{assu:G'} and consider any $u, v \in [-1, 1]^{d_\G}$. Then the corresponding generators $g_u,$ $g_v$ and the densities $\sfp_u,$  $\sfp_v$ fulfill
	\[
		\left\| \sfp_u - \sfp_v \right\|_{L_\infty([0, 1]^d)}
		\leq  \Lipgen \|g_u - g_v\|_{\H^1([0, 1]^d)}
	\]
	with
	\begin{equation}
	\label{eq:const_L_p_g}
		\Lipgen
		= d^{2 + d/2} \Lambda^{3d} (1 + H_\G \Lambda \sqrt d).
	\end{equation}
\end{Lem}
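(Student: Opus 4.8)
The plan is to control the difference $\sfp_u(x) - \sfp_v(x)$ pointwise using the explicit formula \eqref{eq:pg_uniform}, namely $\sfp_w(x) = |\det[\nabla g_w(g_w^{-1}(x))]|^{-1}$. The key observation is that this expression is a composition of three operations applied to $g_w$: (i) taking the inverse map $g_w^{-1}$, (ii) evaluating the Jacobian matrix $\nabla g_w$ at the resulting point, and (iii) taking the reciprocal absolute determinant. I would estimate the Lipschitz dependence of each of these three operations on $g_w$ (measured in the $\H^1$-norm), and then chain the estimates together. Throughout, the $\Lambda$-regularity condition \eqref{eq:H_alpha_lambda_regular} is crucial: it guarantees $\Lambda^{-2}\Id \preceq \nabla g_w^\top \nabla g_w \preceq \Lambda^2 \Id$, hence $|\det \nabla g_w| \in [\Lambda^{-d}, \Lambda^d]$ and $\|(\nabla g_w)^{-1}\| \leq \Lambda$, so all the denominators stay bounded away from zero and the inverse functions are globally bi-Lipschitz with constants controlled by $\Lambda$.

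First I would write
\[
	\sfp_u(x) - \sfp_v(x)
	= \frac{1}{|\det \nabla g_u(g_u^{-1}(x))|} - \frac{1}{|\det \nabla g_v(g_v^{-1}(x))|},
\]
and split it into two parts via the intermediate quantity $|\det \nabla g_v(g_u^{-1}(x))|^{-1}$. The first part compares $|\det \nabla g_u|^{-1}$ with $|\det \nabla g_v|^{-1}$ at the \emph{same} point $g_u^{-1}(x)$; using that $t \mapsto 1/t$ is Lipschitz on $[\Lambda^{-d}, \Lambda^d]$ with constant $\Lambda^{2d}$, and that $A \mapsto \det A$ is Lipschitz on matrices with operator norm $\leq \Lambda$ with constant $\lesssim d \Lambda^{d-1}$ (expanding the determinant, or using $|\det A - \det B| \le d\,\|A-B\|\,\max(\|A\|,\|B\|)^{d-1}$), this contributes a term proportional to $\|\nabla g_u - \nabla g_v\|_{L_\infty} \le \|g_u - g_v\|_{\H^1}$. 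The second part compares $|\det \nabla g_v|^{-1}$ at the two points $g_u^{-1}(x)$ and $g_v^{-1}(x)$; here I need (a) a Lipschitz bound on $x \mapsto |\det \nabla g_v(x)|^{-1}$, which follows from $g_v \in \H^2_{\Lambda_\G}$ so that $\nabla g_v$ is itself Lipschitz with constant $H_\G$, combined with the reciprocal and determinant Lipschitz bounds above; and (b) a bound $\|g_u^{-1} - g_v^{-1}\|_{L_\infty} \lesssim \Lambda \|g_u - g_v\|_{L_\infty}$, which is the standard stability estimate for inverses of bi-Lipschitz maps: if $y = g_u^{-1}(x)$ and $z = g_v^{-1}(x)$ then $g_v(y) - g_v(z) = g_v(y) - g_u(y)$, and applying the lower Lipschitz bound $\|g_v(y) - g_v(z)\| \ge \Lambda^{-1}\|y - z\|$ gives the claim.

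Collecting the two parts yields $\|\sfp_u - \sfp_v\|_{L_\infty} \le \Lipgen \|g_u - g_v\|_{\H^1}$ with $\Lipgen$ a product of powers of $\Lambda$, a polynomial in $d$, and the factor $(1 + H_\G \Lambda \sqrt d)$ coming from the Lipschitz constant of $\nabla g_v$ composed with the chain of determinant/reciprocal bounds; matching constants carefully should reproduce \eqref{eq:const_L_p_g}. The main obstacle I anticipate is bookkeeping the constants so they come out exactly as $d^{2+d/2}\Lambda^{3d}(1 + H_\G\Lambda\sqrt d)$ — in particular getting the dimension-dependent factors from the determinant expansion right, and being slightly careful that $\H^2_{\Lambda_\G}$-regularity (rather than just $\H^1$) is what makes $\nabla g_v$ Lipschitz, which is needed for part (b). None of the individual steps is deep; the work is in assembling the elementary Lipschitz estimates for $\det$, $1/t$, matrix inversion, and map inversion, each restricted to the compact regime enforced by $\Lambda$-regularity.
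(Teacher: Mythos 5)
Your plan is correct and follows essentially the same route as the paper's proof: pointwise comparison via \eqref{eq:pg_uniform}, the Lipschitz bound for $t\mapsto 1/t$ on $[\Lambda^{-d},\Lambda^d]$ coming from $\Lambda$-regularity, the elementary determinant-difference estimate, the splitting into a ``same point, different maps'' term (controlled by $\|g_u-g_v\|_{\H^1}$) and a ``same map, different points'' term (controlled by $H_\G\|g_u^{-1}(x)-g_v^{-1}(x)\|$ using the $\H^2$ bound), and the inverse-map stability estimate $\|g_u^{-1}-g_v^{-1}\|_{L_\infty}\leq \Lambda\sqrt d\,\|g_u-g_v\|_{L_\infty}$ obtained exactly by your identity $g_v(y)-g_v(z)=g_v(y)-g_u(y)$ and the lower singular-value bound. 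The only cosmetic difference is that you insert the intermediate quantity at the level of the reciprocal determinants while the paper first applies the reciprocal-Lipschitz step and then splits inside the determinant difference; the constants assemble in the same way.
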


We provide the proof of \Cref{lem:pf-pg} in \Cref{sec:pf-pg_proof}. Finally, let us show that the $\H^1$-norm of $(g_u - g_v)$ scales linearly with the norm of $(u - v)$.
We need a counterpart of \Cref{Lem:inf_norm_bound} for the Jacobi matrices $\nabla \fone(x)$ and $\nabla \ftwo(x)$.
\begin{Lem}
	\label{Lem:inf_norm_bound_derivative}
	Within the notations of Lemma \ref{Lem:inf_norm_bound}, the neural networks $\fone$ and $\ftwo$ satisfy the inequality
	\begin{equation}
		\label{eq:inf_norm_bound_2}
		\norm{\nabla \fone(x) - \nabla \ftwo(x)}_{\infty}
		\leq \eps N(N+1)2^{N+1} \prod\limits_{\ell=0}^{N} (p_{\ell}+1)^{2^{N+1}+1}
		\quad \text{for all $x \in [0, 1]^d$.}
	\end{equation}
\end{Lem}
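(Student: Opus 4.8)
The plan is to differentiate both networks by the chain rule, express each Jacobian as a product of weight matrices and diagonal derivative matrices, and then control the difference of the two products by a telescoping sum, in the same spirit as the proof of \Cref{Lem:inf_norm_bound}. The one genuinely new analytic input is that the derivative of the ReQU activation, $\requ'(t) = 2(t\vee 0)$, is itself Lipschitz (with constant $2$); this is precisely why ReQU rather than ReLU is used here.

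First I would introduce the intermediate quantities. For $k\in\{1,2\}$ set $h_0^{(k)} = x$ and recursively $u_\ell^{(k)} = \Wone[k]_{\ell-1} h_{\ell-1}^{(k)}$ (reading $\Wone[k]$ as $\Wone$ when $k=1$ and $\Wtwo$ when $k=2$) and $h_\ell^{(k)} = \requ_{\vone[k]_\ell}(u_\ell^{(k)})$ for $\ell = 1,\dots,N$. By the chain rule, $\nabla f^{(k)}(x) = \Wone[k]_N \, \mathsf{D}^{(k)}_N \, \Wone[k]_{N-1} \, \mathsf{D}^{(k)}_{N-1} \cdots \Wone[k]_1 \, \mathsf{D}^{(k)}_1 \, \Wone[k]_0$, where $\mathsf{D}^{(k)}_\ell = \diag{\requ'(u_\ell^{(k)} - \vone[k]_\ell)}$ and $\requ'$ acts componentwise. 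Next I would record the a priori bounds: since all weights lie in $[-1,1]$ and $x\in[0,1]^d$, the same induction used in the proof of \Cref{Lem:inf_norm_bound} bounds $\norm[\infty]{u_\ell^{(k)}}$, $\norm[\infty]{h_\ell^{(k)}}$, hence the entries of $\mathsf{D}^{(k)}_\ell$, by products of the form $\prod_\ell (p_\ell+1)^{c\,2^{N}}$; the same reasoning (using $\norm[\infty]{AB}\le(\text{inner dim})\norm[\infty]{A}\norm[\infty]{B}$) bounds the partial products $\Wone[k]_N \mathsf{D}^{(k)}_N\cdots\Wone[k]_\ell$ and $\mathsf{D}^{(k)}_\ell \Wone[k]_{\ell-1}\cdots\Wone[k]_0$ in $\norm[\infty]{\cdot}$ by similar quantities. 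Moreover, from the course of proving \Cref{Lem:inf_norm_bound} one has the forward-difference estimates $\norm[\infty]{h_\ell^{(1)} - h_\ell^{(2)}}\lesssim \eps\cdot(\text{poly in }p_j)$, hence also $\norm[\infty]{u_\ell^{(1)} - u_\ell^{(2)}}\lesssim \eps\cdot(\text{poly in }p_j)$.

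Then I would bound the difference of the diagonal factors: since $t\mapsto 2(t\vee0)$ is $2$-Lipschitz,
\[
	\norm[\infty]{\mathsf{D}^{(1)}_\ell - \mathsf{D}^{(2)}_\ell}
	\le 2\left(\norm[\infty]{u_\ell^{(1)} - u_\ell^{(2)}} + \norm[\infty]{\vone_\ell - \vtwo_\ell}\right)
	\le 2\left(\norm[\infty]{u_\ell^{(1)} - u_\ell^{(2)}} + \eps\right),
\]
which by the previous step is $\lesssim \eps\cdot(\text{poly in }p_j)$. Combined with $\norm[\infty]{\Wone_\ell - \Wtwo_\ell}\le\eps$, every atomic factor of the two products differs by at most $\eps$ times a polynomial in the widths. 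Finally I would telescope: writing $\nabla\fone(x) - \nabla\ftwo(x)$ as a sum of $2N+1$ terms, each replacing exactly one factor by the corresponding difference while keeping a prefix from network $1$ and a suffix from network $2$, and estimating every prefix/suffix by its a priori bound, yields $\norm[\infty]{\nabla\fone(x) - \nabla\ftwo(x)}$ bounded by $(2N+1)$ times $\eps$ times a polynomial in the $p_j$. A careful count of the dimension factors accumulated in each matrix product (at most $2N+1$ factors of widths $\le\norm[\infty]{\A}$, together with the repeated-squaring growth of exponents induced by ReQU) then consolidates to the claimed bound $\eps\, N(N+1)2^{N+1}\prod_{\ell=0}^N(p_\ell+1)^{2^{N+1}+1}$.

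The main obstacle is purely bookkeeping: one must propagate the exponentially growing exponents — a consequence of ReQU squaring its argument at every layer — consistently through both the a priori estimates and the telescoping sum, and then collapse all the polynomial-in-$p_\ell$ factors and the $N$-dependent combinatorial prefactors into the single clean expression of the statement. Conceptually nothing is hard once the forward-pass estimates of \Cref{Lem:inf_norm_bound} are available; the Lipschitzness of $\requ'$ is the only new ingredient.
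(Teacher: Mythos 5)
Your proposal follows essentially the same route as the paper's proof: both express the Jacobian via the chain rule as a product of weight matrices and diagonal ReQU-derivative factors (the paper pulls the factor $2^N$ out front and merges each diagonal with the adjacent weight matrix, which only changes the number of telescoping terms), then telescope the difference of products, bound the difference of each diagonal factor through the forward-pass estimates of \Cref{Lem:inf_norm_bound} together with the Lipschitzness of the ReQU derivative, and control the remaining prefix/suffix factors by the same a priori bounds. The bookkeeping you defer is exactly what the paper carries out, and it consolidates to the stated bound.
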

One can find the proof of \Cref{Lem:inf_norm_bound_derivative} in \Cref{sec:inf_norm_bound_derivative}.
Lemmata \ref{Lem:inf_norm_bound} and \ref{Lem:inf_norm_bound_derivative} immediately yield that, under Assumption \ref{assu:G'}, we have
\[
	\|g_u - g_v\|_{\H^1([0, 1]^d)}
	\leq  \|u - v\|_\infty \cdot N_\G (N_\G + 1)2^{N_\G + 1} \prod\limits_{p \in \A_\G} (p + 1)^{2^{N_\G + 1} + 1}.
\]
Hence, Assumption \ref{assu:p} holds with
\[
	\sfl_\sfp = d^{2 + d/2} \Lambda^{3d} (1 + H_\G \Lambda \sqrt d) N_\G (N_\G + 1)2^{N_\G + 1} \prod\limits_{p \in \A_\G} (p + 1)^{2^{N_\G + 1} + 1}.
\]
Thus, we proved that Assumptions \ref{assu:G'} and \ref{assu:D'} yield Assumptions \ref{assu:G}, \ref{assu:D}, and \ref{assu:p}.

\subsection{Rates of convergence in nonparametric density estimation with GANs}

The discussion in \Cref{sec:verifying} implies that 
\Cref{fast_rate} can be applied to the setting described in the beginning of \Cref{sec:example}. In this section, we go further and provide upper bounds on $\Delta_\G$ and $\Delta_\D$ under assumptions \ref{assu:D'}, \ref{assu:G'}, and \ref{assu:AP}. The key ingredient of our analysis is the recent result of \cite{belomestny22} quantifying the expressiveness of neural networks with ReQU activations. It ensures that for any $f \in \H^{\beta+1}([0,1]^d, H^*)$, $\beta > 1$, and any $\eps > 0,$ there is a neural network with ReQU activation function from the class $\H^{\lfloor\beta\rfloor}([0,1]^d, H^* + \eps)$ which approximates $f$ within the accuracy $\eps$ with respect to the norm in $\H^{\lfloor\beta\rfloor}([0,1]^d)$. We would like to emphasize that, unlike many other results on approximation properties of neural networks, \cite{belomestny22} considers  simultaneous approximation of a smooth function and its derivatives, which is crucial for our purposes. This result allows us to derive the optimal rates of convergence for GAN estimates in the nonparametric density estimation problem.
However, we would like to emphasize that the proof of Theorem 2 works for any choice of activation function, which yields simultaneous approximation of a function and its derivatives by a neural network with weights taking their values in $[-1, 1]$.

\begin{Th}
\label{thm:requ}
    Assume that \ref{assu:AP} holds with $\beta > 2$.
    Choose $H_\G \geq 2 H^*$, $\Lambda_\G \geq 2\Lambda$, $D_{\min} \leq \Lambda^{-d} / (\Lambda^{d} + \Lambda^{-d})]$, $D_{\max} \geq \Lambda^{d} / (\Lambda^{d} + \Lambda^{-d})]$, and $H_\D$ large enough.
	Then there are positive integers $N_\G$, $d_\G$, $N_\D$, $d_\D$ and the architectures $\A_\G \in \mathbb N^{N_\G + 2},$  $\A_\D \in \mathbb N^{N_\D + 2}$ such that   \ref{assu:G'} and \ref{assu:D'} define nonempty sets \(\G\) and \(\D\), respectively. Let us consider the estimator \eqref{gan} with $\Wset = [-1, 1]^{d_\G}$, $\Tset = [-1, 1]^{d_\D},$ \(g_w\in \G\) and \(D_\theta\in \D.\) For any $\delta \in (0, 1)$, with probability at least $1 - \delta$, $\sfp_{\widehat w}$ satisfies the inequality  
	\begin{equation}
		\label{eq:final_rate_new}
		\JS(\sfp_{\widehat w}, \pstar)
		\lesssim \left( \frac{\log n}{n} \right)^{2\beta / (2\beta + d)} + \frac{\log(1/\delta)}n,
	\end{equation}
	provided that $n \geq n_0$ with $n_0$  depending  only on $\beta, d, \Lambda, H^*$, and $H_\G$.
    In \eqref{eq:final_rate_new}, the notation $\lesssim$ stands for an inequality up to a multiplicative constant depending on $d, \beta, H^*, H_\G$, and $H_\D$ only.  
\end{Th}

\begin{Rem}
The dimensions \(d_\G\) and \(d_\D\) define the complexity of the optimization problem in \eqref{gan} and depend on \(n\). It follows from the proof of Theorem~\ref{thm:requ} that 
\begin{eqnarray*}
d_\G\lesssim \left( \frac{n}{\log n} \right)^{d / (2\beta + d)},\quad d_\D\lesssim \left( \frac{n}{\log n} \right)^{d / (2\beta + d)}.
\end{eqnarray*}
Let us note that this dependence of \(d_\G\) and \(d_\D\) on \(n\) can not be  avoided  in general.
\end{Rem}

% N. P. If we insert a note ``Discussion'' before this paragraph, we must insert a similar note after Theorem 1
%\paragraph{Discussion}
\Cref{thm:requ} improves the dependence on both $n$ and $\delta$ in the existing bounds on the JS-divergence between the vanilla GAN estimate $p_{\widehat w}$ and $p^*$.
In \citep[Theorem 3.13 and Theorem 4.4]{asatryan2020convenient}, the authors proved that $\E \JS(p_{\widehat w}, p^*)$ decays as fast as $n^{-1/2}$ if $2\beta > d$.
Our results show that in this case the rate of convergence can be much faster.
The reason for suboptimality of \citep[Theorem 4.4]{asatryan2020convenient} is the use of the chaining technique to control the global supremum of the empirical process $L_n(w, \theta) - L(w, \theta)$.
This approach was successfully applied to WGANs (see, e.g., \citep{liang2018well}) but in the case of vanilla GANs, one can prove better upper bounds for the supremum of \(L_n(w, \theta) - L(w, \theta)\) in a local vicinity of the saddle point.
It is also worth mentioning that, if one uses the bound \eqref{eq:biau_th41} from \citep[Theorem 4.1]{biau2018some} and \citep[Theorem 2]{belomestny22} to control $\Delta_\G$ and $\Delta_\D$, he will get a suboptimal rate of convergence
\begin{equation}
    \label{eq:JS-rate}
    \E \JS(\sfp_{\widehat w}, \pstar)
    \lesssim \left( \frac{\log n}n \right)^{2\beta / (4\beta + d)}.
\end{equation}
Finally, the second term  $\log(1/\delta) / n$ in \eqref{eq:final_rate_new} significantly improves  the standard rate $\sqrt{\log(1/\delta) / n}$ which follows from McDiarmid's inequality.

\Cref{thm:requ} also yields an upper bound on the squared $L_2$-distance. Indeed, under the conditions of \Cref{thm:requ}, \Cref{lem:js_reg} and \Cref{lem:pmin_pmax} yield that
\[
    \|\sfp_{\widehat w} - \pstar\|_{L_2(\mu)}^2
    \lesssim \JS(\sfp_{\widehat w}, \pstar)
    \lesssim \left(\frac{\log n}{n} \right)^{2\beta / (2\beta + d)} + \frac{\log(1/\delta)}n.
\]
This coincides (up to logarithmic factors) with the well-known minimax rate of convergence $(\log n / n)^{2\beta / (2 \beta + d)}$ for estimation of a smooth bounded away from zero density under the squared $L_2$-loss.
It turns out that the bound on the JS-divergence from \Cref{thm:requ} is also minimax optimal up to some logarithmic factors, provided that $\pstar$ belongs to the class of densities satisfying Assumption \ref{assu:AP}.

\begin{Th}
	\label{th:lower_bound}
	Let $(X_1, \dots, X_n)$ be a sample of i.i.d. observations generated from a density 
	$\pstar$ satisfying \Cref{assu:AP}.
	Then for any estimate $\widehat\sfp$  of $\pstar$,  that is, measurable function of \(X_1,\ldots,X_n,\)
	it holds that
	\[
		\sup\limits_{\pstar } \E \JS(\widehat\sfp, \pstar)
		\gtrsim n^{-2\beta / (2\beta + d)}
	\]
	with a hidden constant depending on $d$ only.
\end{Th}

The proof of \Cref{th:lower_bound} relies on the van Trees inequality (see \citep[p. 72]{vt68} and \citep{gl95}).
Though under Assumption \Cref{assu:AP} $\JS(\sfp_w, \pstar)$ is equivalent to $\|\sfp_w - \pstar\|_{L_2}^2$ (see \Cref{lem:js_reg} and \Cref{lem:pmin_pmax} below), we would like to emphasize that \Cref{th:lower_bound} does not follow from the existing lower bounds in nonparametric density estimation (see, for instance, \cite[Exercise 2.10]{tsybakov2008introduction}). The reason is that the class of admissible densities in Assumption \Cref{assu:AP} is narrower, than $\H^\beta([0, 1]^d, H_0)$, $H_0 > 0$, due to the additional assumption that $\pstar$ is the density of $g^*(Y)$ for some $g^* \in \H^{1 + \beta}_\Lambda([0, 1]^d, H^*)$. If $\pstar \in \H^\beta([0, 1]^d, H_0)$, then, according to Brenier's theorem, there is $\varphi^* : [0, 1]^d \rightarrow \R^d$, such that $\nabla \varphi^*(Y) \sim \pstar$. However, Caffarelli's regularity theory does not guarantee that $\nabla^2 \varphi^*$ satisfies the condition
\[
    \Lambda^{-2}\Id_{d\times d}\preceq \nabla^2 \varphi^*(x)^\top \nabla^2 \varphi^*(x) \preceq\Lambda^2 \Id_{d\times d},
\]
as required by Assumption \ref{assu:AP}. Without it, the lower bound will be irrelevant. 
Moreover, the theory does not provide a uniform upper bound on the $(\beta + 2)$-th derivative of $\varphi^*$. In contrary, it states that the $(\beta + 2)$-th derivative of $\varphi^*$ can tend to infinity at the border of $[0, 1]^d$. For these reasons, we find it necessary to prove an explicit lower bound in our setup.

\section{Proofs of the main results}
\label{sec:conv}

This section contains the proofs of our main results, Theorems \ref{fast_rate}, \ref{thm:requ}, and \ref{th:lower_bound}. The proof of the upper bounds relies on the uniform high probability bound on $L_n(w, \theta) - L(w, \theta)$, given below.

\begin{Prop}
	\label{prop:uniform_bound}
	Grant Assumptions \ref{assu:G}, \ref{assu:D}, and \ref{assu:p}. Let $\Wset \subseteq [-1, 1]^{d_\G}$ and $\Tset \subseteq [-1, 1]^{d_\D}$. Then, for any $\delta \in (0, 1)$ and any $\eps \in (0, 2]$, with probability at least $1 - \delta$, it holds that
	\begin{align*}
		\left| L_n(w, \theta) - L(w, \theta) \right|
		&
		\leq 3 C_{\ref{eq:eps_net_const}} \eps + \sqrt{\sfl_{\sfp}} \eps^{1/2}
		\\&\quad
        + 4 \sqrt{ \frac{C_{\ref{eq:c_D_var}} \JS(\sfp_{w}, \pstar)
		(d_\G \log(2/\eps) + d_\D \log(2/\eps) + \log(2/\delta))}{2n}}
		\\&\quad
		+ \sqrt{ \frac{C_{\ref{eq:c_D_var}} \Delta(w, \theta)
		(d_\G \log(2/\eps) + d_\D \log(2/\eps) + \log(2/\delta))}{2n}}
		\\&\quad
		+ \frac{2(C_{\ref{eq:c_D_var}} +  C_{\ref{eq:C_D_def}}) (d_\G \log(2/\eps) + d_\D \log(2/\eps) + 
		\log(2/\delta))}{3 n}
	\end{align*}
	simultaneously for all $w \in \Wset$ and $\theta \in \Tset$.
	Here $\Delta(w, \theta) = \JS(\sfp_w, \pstar) - \log 2 - L(w, \theta)$ and the constants $C_{\ref{eq:eps_net_const}}$, $C_{\ref{eq:c_D_var}}$, and $C_{\ref{eq:C_D_def}}$ are defined as follows:
	\begin{equation}
		\label{eq:eps_net_const}
		C_{\ref{eq:eps_net_const}}
		= \frac{\sfl_\G \sfl_\Xset}{2 - 2D_{\max}} + \frac{\sfl_\Tset }{D_{\min} \wedge (1 - D_{\max})},
	\end{equation}
	\begin{align}
		\label{eq:c_D_var}
		C_{\ref{eq:c_D_var}}
		&\notag
		= \left( \frac{\log^2(2 D_{\min})}{1/2 - D_{\min}} \vee 2 \log^2 2 \right)
		+ \left( \frac{\log^2(2 - 2 D_{\max})}{D_{\max} - 1/2} \vee 2 \log^2 2 \right)
		\\&\quad
		+ \left( \frac{\log(e/(2 D_{\min}))}{2 D_{\min}^2}
		+ \frac{\log(e/(2 - 2D_{\max}))}{2(1 - D_{\max})^2} \right),
	\end{align}
	\begin{equation}
		\label{eq:C_D_def}
		C_{\ref{eq:C_D_def}} = \log\left(\frac1{D_{\max}} \vee \frac1{D_{\min}} \right)
		\vee \log\left(\frac1{1 - D_{\max}} \vee \frac1{1 - D_{\min}} \right)
        = \log \frac1{D_{\min} \land (1 - D_{\max})}.
	\end{equation}
\end{Prop}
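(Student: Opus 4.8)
The plan is to bound $L_n(w,\theta) - L(w,\theta)$ uniformly over $(w,\theta)$ by combining a discretization argument with a Bernstein-type concentration inequality applied at each point of a finite net. First I would write $L_n(w,\theta) - L(w,\theta)$ as the average of the centered random variables $\xi_i = \tfrac12\bigl(\log D_\theta(X_i) - \E\log D_\theta(X)\bigr) + \tfrac12\bigl(\log(1 - D_\theta(g_w(Y_i))) - \E\log(1 - D_\theta(g_w(Y)))\bigr)$. Because of Assumption~\ref{assu:D}, each summand is bounded (by $C_{\ref{eq:C_D_def}}$ up to constants), so Bernstein's inequality applies; the key refinement is that the \emph{variance} of $\xi_i$ should be controlled not by a constant but by a quantity proportional to $\JS(\sfp_w,\pstar) + \Delta(w,\theta)$. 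This is where I expect to use a lemma (presumably proved elsewhere in the paper, analogous to \Cref{lem:Delta_D_lower_bound}) relating $\Var(\log D_\theta(X))$ and the variance of the fake term to the excess $\JS(\sfp_w,\pstar) - \log 2 - L(w,\theta)$ and to $\JS(\sfp_w, \pstar)$ itself — this is the mechanism that produces the factor $C_{\ref{eq:c_D_var}}$ and the two separate square-root terms in the stated bound.

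Next I would construct an $\eps'$-net on $\Wset \times \Tset \subseteq [-1,1]^{d_\G + d_\D}$ in the $\|\cdot\|_\infty$ metric, of cardinality at most $(2/\eps')^{d_\G + d_\D}$. Applying the Bernstein bound at each net point and taking a union bound over the net yields, with probability at least $1-\delta$, a bound of the form $\sqrt{C_{\ref{eq:c_D_var}}\,\mathrm{Var\text{-}proxy}\cdot(d_\G + d_\D)\log(2/\eps') + \log(2/\delta)}/\sqrt{2n}$ plus the Bernstein remainder term, simultaneously at all net points. Then I would pass from the net to arbitrary $(w,\theta)$ using the Lipschitz continuity: Assumptions~\ref{assu:G} and \ref{assu:D} give that $w \mapsto \log D_\theta(g_w(Y))$ and $\theta \mapsto \log D_\theta(X)$ are Lipschitz (the $1/(2 - 2D_{\max})$ and $1/(D_{\min}\wedge(1-D_{\max}))$ factors coming from differentiating $\log$ and $\log(1-\cdot)$ via the lower bounds on $D_\theta$), so moving within distance $\eps'$ of a net point changes $L_n - L$ by at most $2C_{\ref{eq:eps_net_const}}\eps'$. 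Choosing $\eps' = \eps$ gives the $3C_{\ref{eq:eps_net_const}}\eps$ term.

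The remaining subtlety is the term $\sqrt{\sfl_\sfp}\,\eps^{1/2}$, which must come from controlling how $\JS(\sfp_w,\pstar)$ itself — appearing inside the variance proxy — changes as $w$ moves within the net: Assumption~\ref{assu:p} bounds $|\sfp_u(x) - \sfp_v(x)|$ by $\sfl_\sfp\|u-v\|_\infty$, and one needs to translate this $L_\infty$ closeness of densities into closeness of the JS divergences. Since $\JS$ is (locally, away from degeneracy) governed by an $L_2$-type distance between densities, an $L_\infty$ perturbation of size $\sfl_\sfp\eps$ should produce a JS perturbation of order $\sfl_\sfp\eps$, whose square root under the main square-root term contributes $\sqrt{\sfl_\sfp\eps}$; this accounts for the half-power of $\eps$. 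I expect the main obstacle to be precisely this last bookkeeping step: carefully propagating the net discretization error through the variance proxy (which is itself $\JS(\sfp_w,\pstar)$-dependent) without circularity, and verifying that the constants $C_{\ref{eq:c_D_var}}$, $C_{\ref{eq:eps_net_const}}$, $C_{\ref{eq:C_D_def}}$ as written genuinely dominate all the pieces — the concentration and union-bound skeleton is routine, but making the variance-dependent Bernstein bound interact cleanly with the discretization is the delicate part.
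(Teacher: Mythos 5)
Your proposal follows essentially the same route as the paper's proof: a Bernstein bound at each fixed $(w,\theta)$ whose variance is controlled by a proxy of order $\JS(\sfp_w,\pstar)+\Delta(w,\theta)$ (the paper's \Cref{lem:One_Delta_Bernstein}, built on \Cref{lem:Delta_D_lower_bound} and \Cref{lem:js_reg}), a union bound over an $\eps$-net of $[-1,1]^{d_\G+d_\D}$ of cardinality $(2/\eps)^{d_\G+d_\D}$, a Lipschitz transfer of $L_n$ and $L$ off the net producing the $C_{\ref{eq:eps_net_const}}\eps$ term, and a perturbation of the JS-dependent variance proxy producing the $\sqrt{\sfl_\sfp}\,\eps^{1/2}$ term, exactly as you outline. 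The one step you leave heuristic --- converting $L_\infty$-closeness of $\sfp_w$ and $\sfp_{w_\eps}$ into closeness of the variance proxy --- is handled in the paper without any density lower bound (so not via a local $L_2$ equivalence ``away from degeneracy''), namely through the metric property of $\sqrt{\JS}$ combined with $\JS(\sfp_u,\sfp_v)\leq \tfrac{\log 2}{2}\int|\sfp_u-\sfp_v|\,\rmd\mu \lesssim \sfl_\sfp\|u-v\|_\infty^{1/2}$ (\Cref{co:js_metric_holder}), which is the device that lets the triangle inequality act at the level of $\sqrt{\JS}$ and $\sqrt{\Delta}$ inside the Bernstein term.
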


The rest of this section is organized in the following way. Section \ref{sec:uniform_bound_proof} is devoted to the proof of \Cref{prop:uniform_bound}. Then we prove Theorems \ref{fast_rate}, \ref{thm:requ}, and \ref{th:lower_bound} in Sections \ref{sec:fast_rate_proof}, \ref{sec:requ_proof}, and \ref{sec:lower_bound_proof}, respectively.
In the beginning of these sections, we restate the main results for convenience.

\subsection{Proof of \Cref{prop:uniform_bound}}
\label{sec:uniform_bound_proof}

Let us start with the sketch of the proof. First, we study large deviations of $L_n(w, \theta) - L(w, \theta)$ for a fixed pair $(w, \theta) \in \Wset \times \Tset$. After that, we show that $L_n(w, \theta)$, $L(w, \theta)$, and $\sqrt{\JS(\sfp_w, \pstar)}$ are H\"{o}lder with respect to $w \in \Wset$ and $\theta \in \Tset$. This and the $\eps$-net argument allow us to derive a uniform large deviation bound on $L_n(w, \theta) - L(w, \theta)$. We split the proof into several steps for the sake of readability.
	
	\medskip
	\noindent{\bf Step 1: large deviation bound on $L_n(w, \theta) - L(w, \theta)$.}\quad
	Under the conditions of \Cref{fast_rate}, the functional $L(w, \theta)$ has a kind of curvature inherited from the properties of the cross-entropy. This special structure allows us to obtain tight large deviation bounds on $| L_n(w, \theta) - L(w, \theta) |$. To be more precise, we have the following concentration result.
\begin{Lem}
	\label{lem:One_Delta_Bernstein}
	Fix any $w \in \Wset$ and $\theta \in \Tset$. Assume the conditions of \Cref{fast_rate}. Then, for any $\delta \in (0, 1)$, with probability at least $1 - \delta$, it holds that
	\begin{equation}
		\label{eq:One_Delta_Bernstein}
		\left| L_n(w, \theta) - L(w, \theta) \right|
		\leq \sqrt{ \frac{C_{\ref{eq:c_D_var}} (9 \JS(\sfp_w, \pstar) + \Delta(w, \theta)) \log(2/\delta)}{2n}}
		+ \frac{2 C_{\ref{eq:C_D_def}} \log(2/\delta)}{3 n},
	\end{equation}
	where $\Delta(w, \theta) = \JS(\sfp_w, \pstar) - \log 2 - L(w, \theta)$, and the constants $C_{\ref{eq:c_D_var}}$, $C_{\ref{eq:C_D_def}}$ are defined in \eqref{eq:c_D_var} and \eqref{eq:C_D_def}, respectively.
\end{Lem}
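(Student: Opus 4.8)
The plan is to apply Bernstein's inequality to the centered sum $L_n(w,\theta) - L(w,\theta)$, for which the key point is to control the variance proxy in terms of $\JS(\sfp_w, \pstar)$ and $\Delta(w,\theta)$. Write $L_n(w,\theta) - L(w,\theta) = \frac{1}{2n}\sum_{i=1}^n \xi_i + \frac{1}{2n}\sum_{j=1}^n \eta_j$ where $\xi_i = \log D_\theta(X_i) - \E_{X\sim\pstar}\log D_\theta(X)$ and $\eta_j = \log(1 - D_\theta(g_w(Y_j))) - \E_{Y\sim\phi}\log(1 - D_\theta(g_w(Y)))$. Both are i.i.d. sequences of centered random variables, and under Assumption~\ref{assu:D} the summands are bounded: $|\log D_\theta| \le \log(1/D_{\min})$ and $|\log(1-D_\theta)| \le \log(1/(1-D_{\max}))$, so the almost-sure bound is governed by $C_{\ref{eq:C_D_def}}$. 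First I would record these boundedness facts and set up Bernstein's inequality for each of the two sums separately (then combine by a union bound and triangle inequality, absorbing the factor $2$ into the constants).

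The heart of the argument is bounding $\Var_{\pstar}(\log D_\theta(X))$ and $\Var_{\phi}(\log(1 - D_\theta(g_w(Y))))$ by a multiple of $9\JS(\sfp_w,\pstar) + \Delta(w,\theta)$. The natural route: since $\Var(Z) \le \E Z^2$ for $Z$ with a convenient reference point, compare $\log D_\theta$ against $\log D^*_w := \log\big(\pstar/(\pstar+\sfp_w)\big)$, which is the Bayes-optimal discriminator. Writing $\log D_\theta = \log D^*_w + \log(D_\theta/D^*_w)$, the variance splits (up to a factor $2$) into a term involving $\Var(\log D^*_w)$ and a term involving $\log(D_\theta/D^*_w)$. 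The first is directly related to $\JS(\sfp_w,\pstar)$ — indeed $\JS$ is essentially $\frac12\E_{\pstar}\log D^*_w + \frac12\E_{\sfp_w}\log(1-D^*_w) + \log 2$, and one bounds the second moment of $\log D^*_w$ by its KL-type expectation times the boundedness constant (using $D_\theta \in [D_{\min}, D_{\max}]$ to keep $D^*_w$ and its logarithm in a controlled range, which is where the $\log^2(2D_{\min})/(1/2 - D_{\min})$ and $\log^2(2-2D_{\max})/(D_{\max}-1/2)$ pieces of $C_{\ref{eq:c_D_var}}$ come from). The second term, $\big\|\log(D_\theta/D^*_w)\big\|^2_{L_2(\pstar)}$ and its $\sfp_w$-analogue, is controlled by $\big\|D_\theta - D^*_w\big\|^2_{L_2(\pstar+\sfp_w)}$ after a Lipschitz bound on $\log$ away from $0$ and $1$, and by Lemma~\ref{lem:Delta_D_lower_bound} this is at most a constant times $\Delta(w,\theta)$. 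Tracking the numerical constants through these two comparisons yields exactly the combination $C_{\ref{eq:c_D_var}}(9\JS(\sfp_w,\pstar) + \Delta(w,\theta))$; the coefficient $9$ should emerge from the factor-$2$ variance split together with the $\frac12\cdot\frac12$ weighting in $L$ and a Cauchy–Schwarz step relating $\JS(\sfp_w,\pstar)$ to $\KL(\pstar,(\pstar+\sfp_w)/2)$ and $\KL(\sfp_w,(\pstar+\sfp_w)/2)$ individually.

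Once the variance bound $\sigma^2 \le C_{\ref{eq:c_D_var}}(9\JS + \Delta)$ and the a.s.\ bound $\le$ (const)$\cdot C_{\ref{eq:C_D_def}}$ are in hand, Bernstein's inequality gives, with probability at least $1-\delta$,
\[
\left| L_n(w,\theta) - L(w,\theta) \right|
\le \sqrt{\frac{2\sigma^2 \log(2/\delta)}{n}} + \frac{2 C_{\ref{eq:C_D_def}}\log(2/\delta)}{3n},
\]
which after substituting the variance bound (and rescaling by the $\frac{1}{2}$ factors, absorbed into constants) is precisely \eqref{eq:One_Delta_Bernstein}. The main obstacle is the bookkeeping in the variance estimate: one must keep $\log D^*_w$ inside a range where the elementary inequality $t\log^2 t \lesssim$ (something $\times$ KL-integrand) holds with the stated constants, and this forces the somewhat delicate case split ($D_{\min}$ vs. $1/2$, $D_{\max}$ vs. $1/2$) that appears in the definition of $C_{\ref{eq:c_D_var}}$ — handling the regime where $D_{\min}$ is close to $1/2$ (so $1/2 - D_{\min}$ is small) requires falling back on the crude bound $2\log^2 2$, which is exactly the "$\vee 2\log^2 2$" in \eqref{eq:c_D_var}. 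Everything else is a routine application of Bernstein combined with Lemma~\ref{lem:Delta_D_lower_bound}.
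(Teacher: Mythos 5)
Your overall skeleton—Bernstein's inequality combined with a variance bound of the form $\Var \lesssim \JS(\sfp_w,\pstar) + \Delta(w,\theta)$, obtained via \Cref{lem:Delta_D_lower_bound} and \Cref{lem:js_reg}—matches the paper's strategy, but the step at the heart of your variance bound has a genuine gap. You decompose $\log D_\theta = \log D^*_w + \log(D_\theta/D^*_w)$ with $D^*_w = \pstar/(\pstar+\sfp_w)$ and control the second piece by ``a Lipschitz bound on $\log$ away from $0$ and $1$'', asserting that $D_\theta\in[D_{\min},D_{\max}]$ keeps $D^*_w$ and its logarithm in a controlled range. It does not: $D^*_w$ is determined by the densities $\pstar$ and $\sfp_w$, and under the hypotheses of \Cref{fast_rate} (only \ref{assu:G}, \ref{assu:D}, \ref{assu:p}) neither density is assumed bounded away from $0$ or $\infty$—avoiding precisely this assumption is one of the improvements over \citep{biau2018some} that the paper emphasizes. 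On the region where $D^*_w$ is near $0$ or $1$, both $\log D^*_w$ and $\log(D_\theta/D^*_w)$ blow up and the Lipschitz comparison with $\left\| D_\theta - D^*_w\right\|_{L_2(\pstar+\sfp_w)}$ breaks down; to rescue your route you would need a more delicate pointwise inequality (e.g.\ $t\log^2(a/t)\lesssim (a-t)^2$ uniformly over $t\in[0,1]$ and $a\in[D_{\min},D_{\max}]$), which you do not supply.

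The paper avoids this issue by never taking logarithms of $D^*_w$: it bounds $4n\Var[L_n(w,\theta)]$ by $\E\log^2\bigl(2D_\theta(X)\bigr)+\E\log^2\bigl(2-2D_\theta(g_w(Y))\bigr)$, whose arguments lie in $[2D_{\min},2D_{\max}]$ by \ref{assu:D}; it then splits the resulting integral into terms weighted by $D^*_w-\tfrac12$ (handled by Cauchy--Schwarz and \Cref{lem:js_reg}, producing $\sqrt{\JS(\sfp_w,\pstar)}$) and a term quadratic in $D_\theta-\tfrac12$ (handled by convexity/second-derivative bounds for $u\mapsto\log^2(2u)$ and $u\mapsto\log^2(2u)+\log^2(2-2u)$ on $[D_{\min},D_{\max}]$, which is exactly where the constant $C_{\ref{eq:c_D_var}}$ in \eqref{eq:c_D_var} comes from). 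Only at the end does $D^*_w$ enter, through the triangle inequality $\left\|D_\theta-\tfrac12\right\|_{L_2(\pstar+\sfp_w)}\leq \left\|D_\theta-D^*_w\right\|_{L_2(\pstar+\sfp_w)}+\left\|D^*_w-\tfrac12\right\|_{L_2(\pstar+\sfp_w)}$, with the two terms controlled by $\Delta(w,\theta)$ (\Cref{lem:Delta_D_lower_bound}) and $4\JS(\sfp_w,\pstar)$ (\Cref{lem:js_reg}) without any pointwise control of $D^*_w$; this triangle-inequality step is also the actual source of the factor $9$, not the mechanism you sketch. Finally, two smaller points of bookkeeping: applying Bernstein separately to the two halves of $L_n$ and union-bounding gives $\log(4/\delta)$ rather than the stated $\log(2/\delta)$ (the paper applies Bernstein once to the whole sum), and it is not clear your decomposition would reproduce the specific constant $C_{\ref{eq:c_D_var}}$ appearing in the statement.
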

The proof of \Cref{lem:One_Delta_Bernstein} is moved to \Cref{sec:One_Delta_Bernstein_proof}. Our next goal is to convert the concentration inequality \eqref{eq:One_Delta_Bernstein} into a uniform bound on $| L_n(w, \theta) - L(w, \theta)|$.

\medskip
\noindent{\bf Step 2: towards a uniform bound on $L_n(w, \theta) - L(w, \theta)$.}\quad
Let $\eps > 0$ be a parameter to be specified later and let $\Wset_\eps$ and $\Tset_\eps$ be the minimal $\eps$-nets of $\Wset$ and $\Tset$, respectively, with respect to the norm $\|\cdot\|_{\infty}$. It is straightforward to check that the cardinalities of $\Wset_\eps$ and $\Tset_\eps$ satisfy the inequalities
\[
	\left| \Wset_\eps \right| \leq \left( \frac2\eps \right)^{d_\G}
	\quad \text{and} \quad
	\left| \Tset_\eps \right| \leq \left( \frac2\eps \right)^{d_\D}.
\]
Applying \Cref{lem:One_Delta_Bernstein} and the union bound, we conclude that, for any $\delta \in (0, 1)$, with probability at least $1 - \delta$, it holds that
\begin{align}
	\label{eq:eps-net_bound}
	\left| L_n(w, \theta) - L(w, \theta) \right|
	&\notag
	\leq \sqrt{ \frac{C_{\ref{eq:c_D_var}} (9 \JS(\sfp_w, \pstar) + \Delta(w, \theta)) \log(2|\Wset_\eps||\Tset_\eps|/\delta)}{2n}}
	\\&\quad
	+ \frac{2 C_{\ref{eq:C_D_def}} \log(2|\Wset_\eps||\Tset_\eps|/\delta)}{3 n}	
	\\&\notag
	\leq \sqrt{ \frac{C_{\ref{eq:c_D_var}} (9 \JS(\sfp_w, \pstar) + \Delta(w, \theta)) (d_\G \log(2/\eps) + d_\D \log(2/\eps) + \log(2/\delta))}{2n}}
	\\&\quad\notag
	+ \frac{2 C_{\ref{eq:C_D_def}} (d_\G \log(2/\eps) + d_\D \log(2/\eps) + \log(2/\delta))}{3 n}	
\end{align}
simultaneously for all $(w, \theta) \in \Wset_\eps \times \Tset_\eps$. If we show that $L_n(w, \theta)$, $L(w, \theta)$, and $\JS(\sfp_w, \pstar)$ are Lipschitz or at least H\"{o}lder with respect to $w \in \Wset$ and $\theta \in \Tset$, then we can easily extend the bound \eqref{eq:eps-net_bound} from the $\eps$-net $\Wset_\eps \times \Tset_\eps$ to the whole set $\Wset \times \Tset$. The Lipschitzness of $L_n(w, \theta)$ and $L(w, \theta)$ easily follows from Assumptions \Cref{assu:G} and \Cref{assu:D}.
\begin{Lem}
	\label{lem:Delta-Lip}
	Grant Assumptions \ref{assu:D} and \ref{assu:G}.
	Let $w, w_1, w_2 \in \Wset$ and $\theta, \theta_1, \theta_2 \in \Tset$. Then it holds that
	\[
		\left| L_n(w_1, \theta) - L_n(w_2, \theta) \right|
		\leq \frac{\sfl_\G \sfl_\Xset \|w_1 - w_2\|_\infty}{2 - 2D_{\max}} \quad \text{almost surely.}
	\]
	and
	\[
		\left| L_n(w, \theta_1) - L_n(w, \theta_2) \right|
		\leq \frac{\sfl_\Tset \| \theta_1 - \theta_2 \|_\infty}{D_{\min} \wedge (1 - D_{\max})} \quad \text{almost surely.}
	\]
	Moreover, under Assumptions \ref{assu:D} and \ref{assu:G}, we have
	\[
		\left| L(w_1, \theta) - L(w_2, \theta) \right|
		\leq \frac{\sfl_\G \sfl_\Xset \|w_1 - w_2\|_\infty}{2 - 2D_{\max}}
	\]
	and
	\[
		\left| L(w, \theta_1) - L(w, \theta_2) \right|
		\leq \frac{\sfl_\Tset \| \theta_1 - \theta_2 \|_\infty}{D_{\min} \wedge (1 - D_{\max})}
	\]
	for any $w, w_1, w_2 \in \Wset$ and $\theta, \theta_1, \theta_2 \in \Tset$.
\end{Lem}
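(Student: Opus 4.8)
\textbf{Proof proposal for Lemma~\ref{lem:Delta-Lip}.}

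The plan is to prove all four inequalities by the same elementary device: write each difference as an average of differences of the scalar functions $\log D_\theta(\cdot)$ or $\log(1-D_\theta(\cdot))$, and then control each such scalar difference via the mean value theorem applied to $\log$, using the uniform bounds $D_\theta \in [D_{\min}, D_{\max}]$ from Assumption~\ref{assu:D}. The two estimates for $L_n$ and the two for $L$ are literally the same computation, since $L(w,\theta) = \E L_n(w,\theta)$ and the bounds we derive are almost sure and uniform, so it suffices to carry out the $L_n$ case and remark that taking expectations preserves the inequalities for $L$.

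For the Lipschitz dependence on $w$: fix $\theta$ and recall that only the second sum in \eqref{L_n} depends on $w$, through the terms $\log\bigl(1 - D_\theta(g_w(Y_j))\bigr)$. Since $t \mapsto \log(1-t)$ has derivative $-1/(1-t)$, and $D_\theta$ takes values in $[D_{\min},D_{\max}]$, on the relevant range this derivative is bounded in absolute value by $1/(1-D_{\max})$. Hence, for each $j$,
\[
	\bigl| \log(1 - D_\theta(g_{w_1}(Y_j))) - \log(1 - D_\theta(g_{w_2}(Y_j))) \bigr|
	\leq \frac{1}{1 - D_{\max}} \bigl| D_\theta(g_{w_1}(Y_j)) - D_\theta(g_{w_2}(Y_j)) \bigr|.
\]
Now apply the $x$-Lipschitz property of $D_\theta$ (constant $\sfl_\Xset$) and then Assumption~\ref{assu:G} ($\|g_{w_1}(y) - g_{w_2}(y)\| \leq \sfl_\G \|w_1 - w_2\|_\infty$) to bound the right-hand side by $\sfl_\G \sfl_\Xset \|w_1 - w_2\|_\infty / (1 - D_{\max})$. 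Averaging over $j$ and multiplying by the factor $1/2$ in front of the second sum gives the stated constant $\sfl_\G \sfl_\Xset / (2 - 2D_{\max})$.

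For the Lipschitz dependence on $\theta$: now both sums in \eqref{L_n} contribute. For the first sum we use that $|\!\log'(t)| = 1/t \leq 1/D_{\min}$ on $[D_{\min},D_{\max}]$, and for the second that $|\log(1-t)'| = 1/(1-t) \leq 1/(1-D_{\max})$; combined with the $\theta$-Lipschitz property of $D_\theta$ (constant $\sfl_\Tset$) this yields, for each index,
\[
	\bigl| \log D_{\theta_1}(z) - \log D_{\theta_2}(z) \bigr| \leq \frac{\sfl_\Tset \|\theta_1 - \theta_2\|_\infty}{D_{\min}}
	\quad\text{and}\quad
	\bigl| \log(1-D_{\theta_1}(z)) - \log(1-D_{\theta_2}(z)) \bigr| \leq \frac{\sfl_\Tset \|\theta_1 - \theta_2\|_\infty}{1 - D_{\max}}.
\]
Each of the two half-sums in $L_n$ is an average of such terms scaled by $1/2$, so the total is bounded by $\tfrac12 \sfl_\Tset \|\theta_1-\theta_2\|_\infty \bigl(\tfrac1{D_{\min}} + \tfrac1{1-D_{\max}}\bigr) \cdot \tfrac12$ — but a slightly more careful bookkeeping (bounding each half-sum by the worse of the two reciprocals) gives the cleaner stated constant $\sfl_\Tset/(D_{\min} \wedge (1 - D_{\max}))$. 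Finally, since these bounds hold pointwise in the samples $X_i, Y_j$, taking expectations gives the corresponding inequalities for $L(w,\theta) = \E L_n(w,\theta)$, completing the proof. There is no real obstacle here; the only point requiring mild care is matching the exact form of the constants claimed in the statement, which is a matter of deciding whether to bound the two half-sums separately or together.
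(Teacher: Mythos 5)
Your proposal is correct and follows essentially the same route as the paper: isolate the $w$-dependence in the second half-sum and the $\theta$-dependence in both, use the Lipschitz constants $1/(1-D_{\max})$ and $1/D_{\min}$ of $\log(1-t)$ and $\log t$ on $[D_{\min},D_{\max}]$ together with Assumptions \ref{assu:D} and \ref{assu:G}, and pass to $L$ by taking expectations (Jensen). The only blemish is the garbled intermediate expression with the spurious extra factor $\tfrac12$ in the $\theta$-step, but your final bookkeeping (bounding each half-sum by $\sfl_\Tset\|\theta_1-\theta_2\|_\infty/\bigl(2(D_{\min}\wedge(1-D_{\max}))\bigr)$ and summing) is exactly the paper's argument and yields the stated constant.
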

The proof of \Cref{lem:Delta-Lip} is given in \Cref{sec:Delta-Lip_proof}. The next result plays a crucial role in the analysis of $\JS(\sfp_w, \pstar)$.

\begin{Lem}
	\label{lem:js_reg}
	Let $\sfp$ and $\sfq$ be the densities of two probability distributions with respect to a dominating measure $\mu$. Then it holds that
	\begin{equation}
		\label{eq:js_reg}
		\frac14 \int \frac{(\sfp(x) - \sfq(x))^2}{\sfp(x) + \sfq(x)} \rmd \mu
		\leq \JS(\sfp, \sfq)
		\leq \frac{\log 2}2  \int \frac{(\sfp(x) - \sfq(x))^2}{\sfp(x) + \sfq(x)} \rmd \mu.
	\end{equation}
\end{Lem}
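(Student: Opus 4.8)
The plan is to reduce both inequalities to a pointwise estimate on the function under the integral sign. Write $t = \sfp(x)$ and $s = \sfq(x)$; then the Jensen--Shannon divergence can be written as
\[
\JS(\sfp, \sfq)
= \int \left[ \frac{t}{2}\log\frac{2t}{t+s} + \frac{s}{2}\log\frac{2s}{t+s} \right] \rmd\mu,
\]
with the integrand interpreted as its limit ($0$) whenever $t=s=0$. Hence, introducing the elementary function
\[
h(t, s) := \frac{t}{2}\log\frac{2t}{t+s} + \frac{s}{2}\log\frac{2s}{t+s},
\]
it suffices to prove the two-sided pointwise bound
\[
\frac14 \cdot \frac{(t-s)^2}{t+s} \leq h(t,s) \leq \frac{\log 2}{2} \cdot \frac{(t-s)^2}{t+s}
\qquad \text{for all } t, s \geq 0 \text{ with } t+s > 0,
\]
and then integrate with respect to $\mu$.

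To establish the pointwise bound, I would exploit the homogeneity of degree one of $h$: since $h(\lambda t, \lambda s) = \lambda\, h(t,s)$ and $(\lambda t - \lambda s)^2/(\lambda t + \lambda s) = \lambda (t-s)^2/(t+s)$, both sides scale the same way, so we may normalize $t + s = 2$ and set $t = 1+u$, $s = 1-u$ with $u \in [-1, 1]$. Then $h(1+u, 1-u) = \frac12\bigl[(1+u)\log(1+u) + (1-u)\log(1-u)\bigr]$ and $(t-s)^2/(t+s) = 2u^2$, so the claim becomes
\[
\tfrac12 u^2 \leq \tfrac12\bigl[(1+u)\log(1+u) + (1-u)\log(1-u)\bigr] \leq (\log 2)\, u^2,
\qquad u \in [-1, 1].
\]
Let $\psi(u) := (1+u)\log(1+u) + (1-u)\log(1-u)$; this is even, $\psi(0) = 0$, $\psi'(0) = 0$, and $\psi''(u) = \frac{1}{1+u} + \frac{1}{1-u} = \frac{2}{1-u^2}$. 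For the lower bound, $\psi''(u) \geq 2$ on $(-1,1)$, so Taylor's theorem with integral remainder gives $\psi(u) \geq u^2$, i.e.\ the left inequality. For the upper bound, I would compare $\psi$ with $c\, u^2$ where $c = 2\log 2$ is chosen so that equality holds at the endpoint $u = \pm 1$ (where $\psi(1) = 2\log 2$). The function $\psi(u) - 2(\log 2) u^2$ vanishes at $u = 0$ and $u = \pm 1$; one shows it is $\leq 0$ on $[-1,1]$ by checking convexity/concavity: its second derivative $\frac{2}{1-u^2} - 4\log 2$ is negative for small $|u|$ and positive near $|u| = 1$, so $\psi(u) - 2(\log2)u^2$ is concave then convex, and a function that is concave-then-convex, vanishing at both endpoints of an interval and at an interior point, is nonpositive on that interval. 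This yields the right inequality.

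The only mild obstacle is the upper bound near $u = \pm 1$, since the naive estimate $\psi''(u) \leq$ const fails ($\psi''$ blows up at the endpoints); the concave-then-convex sign argument above circumvents this cleanly, and one should also note that the integrands are nonnegative and measurable so that the passage from the pointwise inequality to the integral inequality (including the case where one side is $+\infty$) is justified by monotonicity of the integral. This completes the proof.
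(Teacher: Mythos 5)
Your proposal follows essentially the same route as the paper's proof: both reduce the lemma to the pointwise bound $u^2 \leq \psi(u) \leq 2\log 2\, u^2$ for $\psi(u)=(1+u)\log(1+u)+(1-u)\log(1-u)$ and $u=(\sfp-\sfq)/(\sfp+\sfq)\in[-1,1]$; your normalization $t+s=2$ via homogeneity is exactly the paper's rewriting of the JS integrand as $\psi\bigl((\sfp-\sfq)/(\sfp+\sfq)\bigr)\,(\sfp+\sfq)/4$, and your lower bound via $\psi''\geq 2$ is correct.

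The one step you should tighten is the justification of the upper bound. The principle you invoke, namely that a concave-then-convex function vanishing at both endpoints of an interval and at an interior point is nonpositive, is false as stated: on $[0,1]$ the function $\sin(2\pi u)$ is concave then convex, vanishes at $0$, $1/2$, and $1$, yet is positive on $(0,1/2)$. Moreover, on $[-1,1]$ your $\phi(u):=\psi(u)-2(\log 2)u^2$ is convex--concave--convex (its second derivative $2/(1-u^2)-4\log 2$ is positive near both endpoints and negative in the middle), so it does not even have the shape your principle requires. The argument is easily repaired using facts you already recorded: by evenness it suffices to work on $[0,1]$; let $u_*\in(0,1)$ be the unique zero of $\phi''$. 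Since $\phi(0)=\phi'(0)=0$ and $\phi''<0$ on $(0,u_*)$, the function $\phi$ is nonincreasing, hence $\leq 0$, on $[0,u_*]$; on $[u_*,1]$, convexity together with $\phi(u_*)\leq 0=\phi(1)$ gives $\phi\leq 0$ there as well. With this patch (or with the paper's observation that $\psi(u)/u^2$ is even and increasing on $(0,1]$, so that it is squeezed between its limit $1$ at $0$ and its value $2\log 2$ at $1$), your proof is complete.
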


\Cref{lem:js_reg} yields the H\"{o}lderness of $\JS^{1/2}(\sfp_w, \pstar)$ with respect to $w \in \Wset$, provided that Assumption \ref{assu:p} is fulfilled.

\begin{Co}
	\label{co:js_metric_holder}
	Grant Assumption \ref{assu:p}. Then, for any $u, v \in \Wset$, it holds that
	\[
		\left| \sqrt{\JS(\sfp_u, \pstar)} - \sqrt{\JS(\sfp_v, \pstar)} \right|
		\leq \frac{\sqrt{\sfl_\sfp \log 2}}{2^{1/4}} \|u - v\|_\infty^{1/4}
        < \sqrt{\sfl_\sfp} \|u - v\|_\infty^{1/4}.
	\]
\end{Co}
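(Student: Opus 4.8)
The plan is to derive \Cref{co:js_metric_holder} as a direct consequence of \Cref{lem:js_reg} together with Assumption~\ref{assu:p}. The key observation is that the right-hand inequality in \eqref{eq:js_reg} expresses $\JS(\sfp_u,\pstar)$ in terms of the chi-square-type functional $\chi(\sfp,\sfq) = \int (\sfp-\sfq)^2/(\sfp+\sfq)\,\rmd\mu$, which is more tractable than the KL divergences directly. So first I would reduce the problem to controlling $|\sqrt{\JS(\sfp_u,\pstar)} - \sqrt{\JS(\sfp_v,\pstar)}|$ by something involving $\chi(\sfp_u,\sfp_v)$ or, more simply, by a sup-norm bound on $\sfp_u - \sfp_v$.

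The natural route is the triangle-type inequality for the square root of $\JS$. Since \Cref{lem:js_reg} shows that $\sqrt{\JS(\sfp,\sfq)}$ is comparable to $\tfrac12\sqrt{\chi(\sfp,\sfq)}$ up to constants $\tfrac12$ and $\sqrt{(\log 2)/2}$, and since $\sqrt{\chi(\cdot,\cdot)}$ behaves like a (pseudo)metric (it is, in fact, $\sqrt{2}$ times the Hellinger-type / triangular discrimination metric, which satisfies the triangle inequality), I would write
\[
	\left|\sqrt{\JS(\sfp_u,\pstar)} - \sqrt{\JS(\sfp_v,\pstar)}\right|
	\leq \sqrt{\frac{\log 2}{2}}\left|\sqrt{\chi(\sfp_u,\pstar)} - \sqrt{\chi(\sfp_v,\pstar)}\right|
	\quad\text{?}
\]
Actually the cleaner approach avoids invoking the triangle inequality for $\JS^{1/2}$ itself and instead bounds the difference directly: by the reverse triangle inequality for the $L_2(\mu)$-type seminorm hidden in $\chi$, one gets $|\sqrt{\chi(\sfp_u,\pstar)} - \sqrt{\chi(\sfp_v,\pstar)}| \le \sqrt{\chi(\sfp_u,\sfp_v)}$, but the mismatch between the weights $\sfp_u+\pstar$ and $\sfp_v+\pstar$ requires a small argument. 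The simplest honest path: use the lower bound $\tfrac14\chi(\sfp_v,\pstar) \le \JS(\sfp_v,\pstar)$ and the upper bound $\JS(\sfp_u,\pstar) \le \tfrac{\log 2}{2}\chi(\sfp_u,\pstar)$, combined with a Lipschitz-in-$w$ estimate on $\sqrt{\chi(\sfp_w,\pstar)}$, which in turn follows from $\|\sfp_u - \sfp_v\|_{L_\infty} \le \sfl_\sfp\|u-v\|_\infty$ (Assumption~\ref{assu:p}) after bounding $\chi(\sfp_u,\sfp_v) \le \int |\sfp_u - \sfp_v|\,\rmd\mu \le \|\sfp_u - \sfp_v\|_{L_\infty(\Omega)} \cdot \mu(\Omega)$, using that $(\sfp_u - \sfp_v)^2/(\sfp_u + \sfp_v) \le |\sfp_u - \sfp_v|$.

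Putting the pieces together: from Assumption~\ref{assu:p}, $\chi(\sfp_u,\sfp_v) \le \sfl_\sfp\|u-v\|_\infty$ (using $\mu([0,1]^d)=1$); hence $\sqrt{\chi(\sfp_u,\sfp_v)} \le \sqrt{\sfl_\sfp}\|u-v\|_\infty^{1/2}$. The reverse triangle inequality for $\sqrt{\chi(\cdot,\pstar)}$ — justified because $\chi^{1/2}$ is a metric — yields $|\sqrt{\chi(\sfp_u,\pstar)} - \sqrt{\chi(\sfp_v,\pstar)}| \le \sqrt{\chi(\sfp_u,\sfp_v)}$. Wait — this gives a $1/2$ exponent, not the claimed $1/4$. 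The $1/4$ exponent in the statement tells me the authors instead avoid the metric property of $\chi^{1/2}$ and argue via $|\sqrt{a}-\sqrt{b}| \le \sqrt{|a-b|}$ applied to the $\JS$ values directly after controlling $|\JS(\sfp_u,\pstar) - \JS(\sfp_v,\pstar)|$ by something proportional to $\sqrt{\|u-v\|_\infty}$; using \Cref{lem:js_reg} on both sides and a crude bound, $|\JS(\sfp_u,\pstar) - \JS(\sfp_v,\pstar)| \lesssim \sqrt{\|u-v\|_\infty}$, whence the square-root gives the $1/4$. So the main obstacle — and the step I expect to be fiddly — is extracting the $\sqrt{\|u-v\|_\infty}$ bound on the difference of $\chi$-functionals when the weights in the denominators differ: one must split $\chi(\sfp_u,\pstar) - \chi(\sfp_v,\pstar)$ and control both the numerator difference (Lipschitz, giving $\|u-v\|_\infty$) and the denominator mismatch, with the denominator potentially small, which is what forces the loss from exponent $1$ down to $1/2$ and then, after the outer square root in $|\sqrt a - \sqrt b| \le \sqrt{|a-b|}$, down to $1/4$. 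Tracking the constant carefully then produces $\sqrt{\sfl_\sfp\log 2}/2^{1/4}$, and the final strict inequality $<\sqrt{\sfl_\sfp}\|u-v\|_\infty^{1/4}$ follows since $\sqrt{\log 2}/2^{1/4} = \sqrt{\log 2}/2^{1/4} < 1$ (as $\log 2 < 1 < \sqrt 2$).
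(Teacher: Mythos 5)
Your final, committed route has a genuine gap. You propose to get the $1/4$ exponent from $|\sqrt a-\sqrt b|\le\sqrt{|a-b|}$ after bounding $|\JS(\sfp_u,\pstar)-\JS(\sfp_v,\pstar)|$ ``using \Cref{lem:js_reg} on both sides and a crude bound.'' This does not work: \Cref{lem:js_reg} sandwiches each $\JS$ between $\tfrac14$ and $\tfrac{\log 2}{2}$ times the corresponding $\chi$-functional, and because these two constants differ, a two-sided comparison of each term separately cannot control the \emph{difference} of the two $\JS$ values (even for $u=v$ the resulting upper-minus-lower bound is of order $\chi(\sfp_v,\pstar)$, not zero). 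The same constant mismatch is why your earlier attempt to pass from $|\sqrt{\JS(\sfp_u,\pstar)}-\sqrt{\JS(\sfp_v,\pstar)}|$ to $|\sqrt{\chi(\sfp_u,\pstar)}-\sqrt{\chi(\sfp_v,\pstar)}|$ (the display you marked with ``?'') fails. A direct attack on $\JS(\sfp_u,\pstar)-\JS(\sfp_v,\pstar)$ is also not routine: differentiating the integrand in $\sfp$ produces a factor $\tfrac12\log\frac{2\sfp}{\sfp+\pstar}$, which is log-singular where $\sfp$ is small, exactly the ``fiddly'' denominator issue you flag but never resolve. So as written, the proposal has no complete argument for the central estimate.

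The missing idea, and the paper's actual first step, is that the square root of the Jensen--Shannon divergence is itself a metric (Endres and Schindelin, 2003). Hence
\[
	\left|\sqrt{\JS(\sfp_u,\pstar)}-\sqrt{\JS(\sfp_v,\pstar)}\right|\le\sqrt{\JS(\sfp_u,\sfp_v)},
\]
which removes the weight-mismatch problem entirely: only the two mutually close densities remain. Then \Cref{lem:js_reg} and Assumption~\ref{assu:p} give
\[
	\JS(\sfp_u,\sfp_v)\le\frac{\log 2}{2}\int\frac{(\sfp_u-\sfp_v)^2}{\sfp_u+\sfp_v}\,\rmd\mu
	\le\frac{\log 2}{2}\int|\sfp_u-\sfp_v|\,\rmd\mu
	\le\frac{\log 2}{2}\,\|\sfp_u-\sfp_v\|_{L_\infty}^{1/2}\left(\int|\sfp_u-\sfp_v|\,\rmd\mu\right)^{1/2}
	\le\frac{\sfl_\sfp\log 2}{\sqrt 2}\,\|u-v\|_\infty^{1/2},
\]
and taking the square root yields the stated constant $\sqrt{\sfl_\sfp\log 2}/2^{1/4}$. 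Note also that the loss from exponent $1$ to $1/2$ here has nothing to do with denominator mismatch in $\chi$-functionals (your speculation): it comes from extracting only half a power of the sup-norm while keeping $\int|\sfp_u-\sfp_v|\,\rmd\mu\le 2$, which is needed because the dominating measure $\mu$ is not assumed to have finite (unit) total mass. Your initial instinct — a triangle inequality for a square-root divergence — was the right one; it just has to be applied to $\JS^{1/2}$ rather than to $\chi^{1/2}$.
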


The proofs of \Cref{lem:js_reg} and \Cref{co:js_metric_holder} are moved to \Cref{sec:js_reg_proof} and \Cref{sec:js_metric_holder_proof}, respectively. We are ready to prove a uniform bound on large deviations of $L_n(w, \theta) - L(w, \theta)$.

\medskip
\noindent{\bf Step 3: uniform bound on $L_n(w, \theta) - L(w, \theta)$.}\quad
Let $E$ be the event where \eqref{eq:eps-net_bound} holds. Choose any $(w, \theta) \in \Wset \times \Tset$ and denote the closest to $(w, \theta)$ element of $\Wset_\eps \times \Tset_\eps$ by $(w_\eps, \theta_\eps)$. Then, due to \Cref{lem:Delta-Lip}, the following holds on $E$:
\begin{align*}
	&
	\left| L_n(w, \theta) - L(w, \theta) \right|
	\\&
    \leq \left| L_n(w_\eps, \theta_\eps) - L(w_\eps, \theta_\eps) \right|
	+ 2 \left(\frac{\sfl_\G \sfl_\Xset}{2 - 2D_{\max}} + \frac{\sfl_\Tset }{D_{\min} \vee (1 - D_{\max})}\right) \eps
	\\&
	\leq 2 \left(\frac{\sfl_\G \sfl_\Xset}{2 - 2D_{\max}} + \frac{\sfl_\Tset }{D_{\min} \vee (1 - D_{\max})}\right) \eps
	\\&\quad
	+ \sqrt{ \frac{C_{\ref{eq:c_D_var}} (9 \JS(\sfp_{w_\eps}, \pstar) + \Delta(w_\eps, \theta_\eps)) (d_\G \log(2/\eps) + d_\D \log(2/\eps) + \log(2/\delta))}{2n}}
	\\&\quad
	+ \frac{2 C_{\ref{eq:C_D_def}} (d_\G \log(2/\eps) + d_\D \log(2/\eps) + \log(2/\delta))}{3 n}.	
\end{align*}
According to \Cref{lem:Delta-Lip} and \Cref{co:js_metric_holder},
\[
	\sqrt{\JS(\sfp_{w_\eps}, \pstar)} \leq \sqrt{\JS(\sfp_{w}, \pstar)} + \sqrt{\sfl_\sfp} \eps^{1/4}
\]
and
\begin{align*}
	&
	\sqrt{\Delta(w_\eps, \theta_\eps)}
	= \sqrt{\JS(\sfp_{w_\eps}, \pstar) - \log 2 - L(w_\eps, \theta_\eps)}
	\\&
	\leq \sqrt{2\JS(\sfp_{w}, \pstar) - \log 2 - L(w, \theta) + 2 \sfl_\sfp \eps^{1/2} + C_{\ref{eq:eps_net_const}} \eps} 
	\\&
	\leq \sqrt{\JS(\sfp_{w}, \pstar)} + \sqrt{\Delta(w, \theta)} + \sqrt{2 \sfl_\sfp} \eps^{1/4} + \sqrt{C_{\ref{eq:eps_net_const}} \eps}.
\end{align*}
Thus, on $E$, it holds that
\begin{align*}
	\left| L_n(w, \theta) - L(w, \theta) \right|
	&
	\leq 2 C_{\ref{eq:eps_net_const}} \eps
	+ 4 \sqrt{ \frac{C_{\ref{eq:c_D_var}} \JS(\sfp_{w}, \pstar)
	(d_\G \log(2/\eps) + d_\D \log(2/\eps) + \log(2/\delta))}{2n}}
	\\&\quad
	+ \sqrt{ \frac{C_{\ref{eq:c_D_var}} \Delta(w, \theta)
	(d_\G \log(2/\eps) + d_\D \log(2/\eps) + \log(2/\delta))}{2n}}
	\\&\quad
	+ \sqrt{ \frac{C_{\ref{eq:c_D_var}} \sfl_{\sfp} \eps^{1/2}
	(d_\G \log(2/\eps) + d_\D \log(2/\eps) + \log(2/\delta))}{n}}
	\\&\quad
	+ \sqrt{ \frac{C_{\ref{eq:c_D_var}} C_{\ref{eq:eps_net_const}} \eps
	(d_\G \log(2/\eps) + d_\D \log(2/\eps) + \log(2/\delta))}{2n}}
	\\&\quad
	+ \frac{2 C_{\ref{eq:C_D_def}} (d_\G \log(2/\eps) + d_\D \log(2/\eps) + \log(2/\delta))}{3 n}.
\end{align*}
where $C_{\ref{eq:eps_net_const}}$ is defined in \eqref{eq:eps_net_const}.
Applying the Cauchy-Schwarz inequality to the fourth and the fifth terms, we get the desired bound.

\subsection{Proof of Theorem~\ref{fast_rate}}
\label{sec:fast_rate_proof}

\begin{Th}[restatement of Theorem \ref{fast_rate}]
Assume \ref{assu:G}, \ref{assu:D}, and \ref{assu:p}. Let $\Wset \subseteq [-1, 1]^{d_\G}$ and $\Tset \subseteq [-1, 1]^{d_\D}$. Then, for any $\delta \in (0, 1)$, with probability at least $1 - \delta$, it holds that
	\begin{align*}
		\JS(\sfp_{\widehat w}, \pstar) - \Delta_\G - \Delta_\D 
		&
		\lesssim \sqrt{ \frac{(\Delta_\G + \Delta_\D)\big[(d_\G + d_\D) \log(2(\sfl_\G \sfl_\Xset \vee \sfl_\Tset \vee \sfl_\sfp \vee 1)n) + \log(8/\delta)\big]}{n}}
		\\&\quad
		+ C_{\ref{eq:c_d_min_d_max}} \cdot \frac{(d_\G + d_\D) \log(2(\sfl_\G \sfl_\Xset \vee \sfl_\Tset \vee \sfl_\sfp \vee 1)n) + \log(8/\delta)}{n}
	\end{align*}
	where
	\[
		\Delta_{\G} = \min\limits_{w \in \Wset} \JS(\sfp_w, \pstar),
		\quad
		\Delta_{\D} = \max\limits_{w \in \Wset} \min\limits_{\theta \in \Tset} [\JS(\sfp_w, \pstar) - \log 2 - L(w, \theta)],
	\]
    and
    \[
        C_{\ref{eq:c_d_min_d_max}} = \left( \frac{\log(1 / D_{\min})}{D_{\min}^2} + \frac{\log\big(1 / (1 - D_{\max} \big)}{(1 - D_{\max})^2} \right).
    \]
	Here $\lesssim$ stands for inequality up to an absolute multiplicative constant.
\end{Th}

	Let us introduce $\overline w \in \argmin_{w \in \Wset} \JS(\sfp_{w}, \pstar)$.
	We begin with studying the excess risk
	\begin{equation}
		\label{eq:excess_JS_start}
		\JS(\sfp_{\widehat w}, \pstar) - \Delta_\G
		= \JS(\sfp_{\widehat w}, \pstar) - \min\limits_{w \in \Wset} \JS(\sfp_{w}, \pstar)
		= \JS(\sfp_{\widehat w}, \pstar) - \JS(\sfp_{\overline w}, \pstar),
	\end{equation}
    where $\widehat w$ is given by \eqref{gan}.
	For any $w \in \Wset$, let $\theta_w^*$ denote the parameter, correspoding to the best discriminator in $\D$:
	\[
		\theta_w^* \in \argmax\limits_{\theta \in \Tset} L(w, \theta).
	\]
	Then the definition of $\Delta_\D$ yields that
	\[
		\JS(\sfp_{\widehat w}, \pstar) - \JS(\sfp_{\overline w}, \pstar)
		\leq \Delta_\D + L(\widehat w, \theta^*_{\widehat w}) - L(\overline w, \theta^*_{\overline w}). 
	\]
	Here we used the fact that for any $w \in \Wset$ and any $\theta \in \Tset$ it holds that $L(w, \theta) + \log 2 \leq \JS(\sfp_w, \pstar)$ (see, e.g., \citep[Proposition 1]{goodfellow2014generative} or \citep[Section 2]{biau2018some}) for the explanation.
	The expression in the right-hand side of the last inequality can be rewritten as follows:
	\begin{align}
		\label{eq:risk_js_bound}
		&\notag
		\Delta_\D + \underbrace{\bigl( L(\widehat w, \theta^*_{\widehat w})
		- L_n(\widehat w, \theta^*_{\widehat w}) \bigr)}_{T_1}
		\\&\quad
		+ \underbrace{\bigl( L_n(\widehat w, \theta^*_{\widehat w}) 
		- L_n(\overline w, \theta^*_{\overline w}) \bigr)}_{T_2}
		+ \underbrace{\bigl( L_n(\overline w, \theta^*_{\overline w})
		- L(\overline w, \theta^*_{\overline w}) \bigr)}_{T_3}.       
	\end{align}
    We split the rest of the proof into several steps for convenience.

    \medskip

    \noindent
    \textbf{Step 1: bounds on $T_1$ and $T_3$.}
    \quad
    Let us take
	\begin{align}
        \label{eq:eps_choice}
	    \eps
        &\notag
        =  \frac{C_{\ref{eq:c_D_var}} (d_\G \log(2n) + d_\D \log(2n) + \log(8/\delta))}
		{C_{\ref{eq:eps_net_const}} n}
        \\&\quad
        \wedge \frac{C_{\ref{eq:c_D_var}}^2 (d_\G \log(2n) + d_\D \log(2n) + \log(8/\delta))^2}
		{\sfl_\sfp^2 n^2} \wedge 1.
	\end{align}
    Then $C_{\ref{eq:eps_net_const}} \eps \leq 1/n$, $\sqrt{\sfl_{\sfp} \eps} \leq 1 / n$, and
    \[
        \log(1 / \eps)
        \leq \log n + \log \left( \frac{C_{\ref{eq:eps_net_const}}}{C_{\ref{eq:c_D_var}}} \vee \frac{\sfl_{\sfp}^2}{C_{\ref{eq:c_D_var}}^2} \right)
        \lesssim \log n + \log(\sfl_\G \sfl_\Xset \vee \sfl_\Tset \vee \sfl_\sfp \vee 1).
    \]
	Applying \Cref{prop:uniform_bound} with $\eps$ defined in \eqref{eq:eps_choice},
	we get that, with probability at least $1 - \delta/2$, we simultaneously have
	\begin{align}
		\label{eq:T1_bound}
		T_1
		&\notag
		\lesssim \sqrt{ \frac{(\JS(\sfp_{\widehat w}, \pstar) + \Delta_\D)
		\big[(d_\G + d_\D) \log(2(\sfl_\G \sfl_\Xset \vee \sfl_\Tset \vee \sfl_\sfp \vee 1)n) + \log(8/\delta)\big]}{n}}
		\\&\quad\notag
		+ C_{\ref{eq:c_d_min_d_max}} \cdot \frac{(d_\G + d_\D) \log(2(\sfl_\G \sfl_\Xset \vee \sfl_\Tset \vee \sfl_\sfp \vee 1)n) + \log(8/\delta)}{n}
		\\&
		\leq \sqrt{ \frac{(\JS(\sfp_{\widehat w}, \pstar) - \Delta_\G)
		\big[(d_\G + d_\D) \log(2(\sfl_\G \sfl_\Xset \vee \sfl_\Tset \vee \sfl_\sfp \vee 1)n) + \log(8/\delta)\big]}{n}}
		\\&\quad\notag
		+ \sqrt{ \frac{(\Delta_\G + \Delta_\D)
		\big[(d_\G + d_\D) \log(2(\sfl_\G \sfl_\Xset \vee \sfl_\Tset \vee \sfl_\sfp \vee 1)n) + \log(8/\delta)\big]}{n}}
		\\&\quad\notag
		+ C_{\ref{eq:c_d_min_d_max}} \cdot \frac{(d_\G + d_\D) \log(2(\sfl_\G \sfl_\Xset \vee \sfl_\Tset \vee \sfl_\sfp \vee 1)n) + \log(8/\delta)}{n}
	\end{align}
	and
	\begin{align}
		\label{eq:T3_bound}
		T_3
		&\notag
		\lesssim \sqrt{ \frac{(\Delta_\G + \Delta_\D)
		\big[(d_\G + d_\D) \log(2(\sfl_\G \sfl_\Xset \vee \sfl_\Tset \vee \sfl_\sfp \vee 1)n) + \log(8/\delta)\big]}{n}}
		\\&\quad
		+ C_{\ref{eq:c_d_min_d_max}} \cdot \frac{(d_\G + d_\D) \log(2(\sfl_\G \sfl_\Xset \vee \sfl_\Tset \vee \sfl_\sfp \vee 1)n) + \log(8/\delta)}{n}.
	\end{align}
	Here we used the fact that $\JS(\sfp_{\overline w}, \pstar) = \Delta_\G$ by the definitions of $\overline w$ and $\Delta_\G$. Also, it holds that $\Delta(w, \theta_w^*) \leq \Delta_\D$ due to the definitions of $\theta_w^*$ and $\Delta_\D$. Note that the hidden constants in \eqref{eq:T1_bound} and \eqref{eq:T3_bound} are absolute.

    \medskip

    \noindent
    \textbf{Step 2: a bound on $T_2$.}
    \quad
	It remains to bound $T_2$ in \eqref{eq:risk_js_bound}.
	For any $w \in \Wset$, let us denote
	\[
		\widehat \theta_w \in \argmax\limits_{\theta \in \Tset} L_n(w, \theta).
	\]
	By the definition of $\widehat w$, we have
	\[
		L_n(\widehat w, \widehat \theta_{\widehat w}) \leq L_n(\overline w, \widehat \theta_{\overline w}).
	\]
	Then $T_2 \leq L_n(\overline w, \widehat \theta_{\overline w}) - L_n(\overline w, \theta^*_{\overline w})$, and, applying \Cref{prop:uniform_bound} with $\eps$ from \eqref{eq:eps_choice}
    again, we obtain that, with probability at least $1 - \delta / 2$, it holds that
	\begin{align*}
		T_2
		&
		\lesssim L(\overline w, \widehat \theta_{\overline w}) - L(\overline w, \theta^*_{\overline w})
		\\&\quad
		+ \sqrt{ \frac{(\Delta_\G + \Delta(\overline w, \widehat \theta_{\overline w}))
		\big[(d_\G + d_\D) \log(2(\sfl_\G \sfl_\Xset \vee \sfl_\Tset \vee \sfl_\sfp \vee 1)n) + \log(8/\delta)\big]}{n}}
		\\&\quad
		+ \sqrt{ \frac{(\Delta_\G + \Delta_\D)
		\big[(d_\G + d_\D) \log(2(\sfl_\G \sfl_\Xset \vee \sfl_\Tset \vee \sfl_\sfp \vee 1)n) + \log(8/\delta)\big]}{n}}
		\\&\quad
		+ C_{\ref{eq:c_d_min_d_max}} \cdot \frac{(d_\G + d_\D) \log(2(\sfl_\G \sfl_\Xset \vee \sfl_\Tset \vee \sfl_\sfp \vee 1)n) + \log(8/\delta)}{n},
	\end{align*}
    where, as before, the hidden constants are absolute.
	Since
	\[
		\Delta(\overline w, \widehat \theta_{\overline w})
		= \Delta(\overline w, \theta^*_{\overline w}) + L(\overline w, \theta^*_{\overline w})
		- L(\overline w, \widehat\theta_{\overline w})
		\leq \Delta_\D + L(\overline w, \theta^*_{\overline w})
		- L(\overline w, \widehat\theta_{\overline w}),
	\]
	the following inequality holds on the same event:
	\begin{align*}
		T_2
		&
		\lesssim  - \left( L(\overline w, \theta^*_{\overline w}) - L(\overline w, \widehat \theta_{\overline w}) \right)
		\\&\quad
		+ \sqrt{ \frac{\left(L(\overline w, \theta^*_{\overline w}) - L(\overline w, \widehat\theta_{\overline w}) \right)
		\big[(d_\G + d_\D) \log(2(\sfl_\G \sfl_\Xset \vee \sfl_\Tset \vee \sfl_\sfp \vee 1)n) + \log(8/\delta)\big]}{n}}
		\\&\quad
		+ \sqrt{ \frac{(\Delta_\G + \Delta_\D)
		\big[(d_\G + d_\D) \log(2(\sfl_\G \sfl_\Xset \vee \sfl_\Tset \vee \sfl_\sfp \vee 1)n) + \log(8/\delta)\big]}{n}}
		\\&\quad
		+ C_{\ref{eq:c_d_min_d_max}} \cdot \frac{(d_\G + d_\D) \log(2(\sfl_\G \sfl_\Xset \vee \sfl_\Tset \vee \sfl_\sfp \vee 1)n) + \log(8/\delta)}{n}.
	\end{align*}
	Maximizing the right-hand side over $(L(\overline w, \theta^*_{\overline w})
	- L(\overline w, \widehat\theta_{\overline w}) )^{1/2}$, we obtain that
	\begin{align}
		\label{eq:T2_bound}
		T_2
		&\notag
		\lesssim  \sqrt{ \frac{(\Delta_\G + \Delta_\D)
		\big[(d_\G + d_\D) \log(2(\sfl_\G \sfl_\Xset \vee \sfl_\Tset \vee \sfl_\sfp \vee 1)n) + \log(8/\delta)\big]}{n}}
		\\&\quad
		+ C_{\ref{eq:c_d_min_d_max}} \cdot \frac{(d_\G + d_\D) \log(2(\sfl_\G \sfl_\Xset \vee \sfl_\Tset \vee \sfl_\sfp \vee 1)n) + \log(8/\delta)}{n}.
	\end{align}

    \medskip

    \noindent
    \textbf{Step 3: final bound.}
    \quad
	Consider the union of events where \eqref{eq:T1_bound}, \eqref{eq:T3_bound}, and \eqref{eq:T2_bound} hold. Note that the probability measure of this event is at least $1 - \delta$. Moreover, on this event, we have
	\begin{align*}
		&
        \JS(\sfp_{\widehat w}, \pstar) - \Delta_\G - \Delta_\D 
		\leq T_1 + T_2 + T_3
		\\&
		\lesssim \sqrt{ \frac{(\JS(\sfp_{\widehat w}, \pstar) - \Delta_\G)
		\big[(d_\G + d_\D) \log(2(\sfl_\G \sfl_\Xset \vee \sfl_\Tset \vee \sfl_\sfp \vee 1)n) + \log(8/\delta)\big]}{n}}
		\\&\quad
		+ \sqrt{ \frac{(\Delta_\G + \Delta_\D)
		\big[(d_\G + d_\D) \log(2(\sfl_\G \sfl_\Xset \vee \sfl_\Tset \vee \sfl_\sfp \vee 1)n) + \log(8/\delta)\big]}{n}}
		\\&\quad
		+ C_{\ref{eq:c_d_min_d_max}} \cdot \frac{(d_\G + d_\D) \log(2(\sfl_\G \sfl_\Xset \vee \sfl_\Tset \vee \sfl_\sfp \vee 1)n) + \log(8/\delta)}{n}.
		\\&
		\leq \sqrt{ \frac{(\JS(\sfp_{\widehat w}, \pstar) - \Delta_\G - \Delta_\D)_+
		\big[(d_\G + d_\D) \log(2(\sfl_\G \sfl_\Xset \vee \sfl_\Tset \vee \sfl_\sfp \vee 1)n) + \log(8/\delta)\big]}{n}}
		\\&\quad
		+ 2 \sqrt{ \frac{(\Delta_\G + \Delta_\D)\big[(d_\G + d_\D) \log(2(\sfl_\G \sfl_\Xset \vee \sfl_\Tset \vee \sfl_\sfp \vee 1)n) + \log(8/\delta)\big]}{n}}
		\\&\quad
		+ C_{\ref{eq:c_d_min_d_max}} \cdot \frac{(d_\G + d_\D) \log(2(\sfl_\G \sfl_\Xset \vee \sfl_\Tset \vee \sfl_\sfp \vee 1)n) + \log(8/\delta)}{n}.
	\end{align*}
	Since the inequality $x \leq 2a \sqrt{x} + b$ yields $\sqrt{x} \leq a + \sqrt{a^2 + b}$ and, hence, $x \leq 4a^2 + 2b$, we obtain that
	\begin{align*}
		\JS(\sfp_{\widehat w}, \pstar) - \Delta_\G - \Delta_\D
        &
        \leq \left( \JS(\sfp_{\widehat w}, \pstar) - \Delta_\G - \Delta_\D \right)_+
		\\&
        \lesssim \sqrt{ \frac{(\Delta_\G + \Delta_\D)\big[(d_\G + d_\D) \log(2(\sfl_\G \sfl_\Xset \vee \sfl_\Tset \vee \sfl_\sfp \vee 1)n) + \log(8/\delta)\big]}{n}}
		\\&\quad
		+ C_{\ref{eq:c_d_min_d_max}} \cdot \frac{(d_\G + d_\D) \log(2(\sfl_\G \sfl_\Xset \vee \sfl_\Tset \vee \sfl_\sfp \vee 1)n) + \log(8/\delta)}{n}
	\end{align*}
	on the event of probability measure at least $1 - \delta$.

\subsection{Proof of \Cref{thm:requ}}
\label{sec:requ_proof}

\begin{Th}[restatement of Theorem \ref{thm:requ}]
    Assume that \ref{assu:AP} holds with $\beta > 2$.
    Choose $H_\G \geq 2 H^*$, $\Lambda_\G \geq 2\Lambda$, $D_{\min} \leq \Lambda^{-d} / (\Lambda^{d} + \Lambda^{-d})]$, $D_{\max} \geq \Lambda^{d} / (\Lambda^{d} + \Lambda^{-d})]$, and $H_\D$ large enough.
	Then there are positive integers $N_\G$, $d_\G$, $N_\D$, $d_\D$ and the architectures $\A_\G \in \mathbb N^{N_\G + 2},$  $\A_\D \in \mathbb N^{N_\D + 2}$ such that   \ref{assu:G'} and \ref{assu:D'} define nonempty sets \(\G\) and \(\D\), respectively. Let us consider the estimator \eqref{gan} with $\Wset = [-1, 1]^{d_\G}$, $\Tset = [-1, 1]^{d_\D},$ \(g_w\in \G\) and \(D_\theta\in \D.\) For any $\delta \in (0, 1)$, with probability at least $1 - \delta$, $\sfp_{\widehat w}$ satisfies the inequality  
	\[
		\JS(\sfp_{\widehat w}, \pstar)
		\lesssim \left( \frac{\log n}{n} \right)^{2\beta / (2\beta + d)} + \frac{\log(1/\delta)}n,
	\]
	provided that $n \geq n_0$ with $n_0$  depending  only on $\beta, d, \Lambda, H^*$, and $H_\G$.
    In \eqref{eq:final_rate_new}, the notation $\lesssim$ stands for an inequality up to a multiplicative constant depending on $d, \beta, H^*, H_\G$, and $H_\D$ only. 
\end{Th}

    Applying \Cref{fast_rate}, we obtain that, for any $\delta \in (0, 1)$, the GAN estimate $\sfp_{\widehat w}$ satisfies the inequality
    \begin{align}
        \label{eq:th_fast_rate_applied}
		\JS(\sfp_{\widehat w}, \pstar) - \Delta_\G - \Delta_\D 
		&\notag
		\lesssim \sqrt{ \frac{(\Delta_\G + \Delta_\D)\big[(d_\G + d_\D) \log(2(\sfl_\G \sfl_\Xset \vee \sfl_\Tset \vee \sfl_\sfp \vee 1)n) + \log(8/\delta)\big]}{n}}
		\\&\quad
		+ C_{\ref{eq:c_d_min_d_max}} \cdot \frac{(d_\G + d_\D) \log(2(\sfl_\G \sfl_\Xset \vee \sfl_\Tset \vee \sfl_\sfp \vee 1)n) + \log(8/\delta)}{n}
	\end{align}
    on an event with probability at least $1 - \delta$. Let us recall that
	\[
		\Delta_{\G} = \min\limits_{w \in \Wset} \JS(\sfp_w, \pstar),
		\quad \text{and} \quad
		\Delta_{\D} = \max\limits_{w \in \Wset} \min\limits_{\theta \in \Tset} \big[\JS(\sfp_w, \pstar) - \log 2 - L(w, \theta) \big].
	\]
    In the rest of the proof, we provide upper bounds on the approximation terms $\Delta_\G$ and $\Delta_\D$ and specify the architectures of neural networks and the parameters $d_\G$ and $d_\D$ as well. Our approach relies on the following result, concerning approximation properties of neural networks with ReQU activations.
    
    \begin{Th}[\cite{belomestny22}, Theorem 2]
    \label{prop:requ_approx}
	   Let $\beta > 2$ and let $p, d \in \mathbb N$. Then, for any $H > 0$, $f : [0, 1]^d \rightarrow \R^p$, $f \in \H^\beta([0,1]^d, H)$ and any integer $K \geq 2$, there exists a neural network $h_f : [0, 1]^d \rightarrow \R^p$ of the width
	   \[
            \bigl(4d (K + \lbeta)^d \bigr) \vee 12\,\left((K + 2\lbeta) + 1\right) \vee p
        \]
        with
        \[
            6 + 2(\lbeta-2) + \lceil \log_{2}{d} \rceil + 2\left( \left\lceil \log_2 (2d\lbeta + d) \vee \log_2 \log_2 H \right\rceil \vee 1 \right)
        \]
        hidden layers and at most $p (K + \lbeta)^{d} C(\beta, d, H)$ non-zero weights taking their values in $[-1, 1]$, such that, for any $\ell \in \{0, \dots, \lfloor\beta\rfloor\}$,
	   \begin{equation}
		  \label{eq:approx_H_l_norm}
		  \left\|f - h_f\right\|_{\H^\ell([0, 1]^d)}
		  \leq \frac{ (\sqrt{2}e d)^{\beta} H}{K^{\beta-\ell}} + \frac{9^{d(\lbeta - 1)} (2\lbeta + 1)^{2d + \ell} (\sqrt{2}ed)^\beta H}{K^{\beta - \ell}}.
	   \end{equation}
	   The above constant $C(\beta, d, H)$ is given by
	   \begin{align}
            \label{eq:const_sparsity}
            C(\beta, d, H)
	        &\notag
            = \left( 60 \bigl( \left\lceil \log_2 (2d\lbeta + d) \vee \log_2 \log_2 H \right\rceil \vee 1 \bigr) + 38 \right) 
            \\&\quad
            + 20 d^2 + 144 d \lbeta + 8 d.
	   \end{align}
    \end{Th}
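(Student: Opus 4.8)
The plan is to approximate $f$ by a tensor-product spline surrogate $P$ and then realise $P$ \emph{exactly} by a ReQU network of the advertised size, so that the whole error in \eqref{eq:approx_H_l_norm} is already incurred at the spline stage. Concretely, I would tile $[0,1]^d$ by a uniform grid of mesh $1/K$, take a partition of unity $\{\psi_\kappa\}$ subordinate to it built as tensor products of univariate B-splines of degree $\lbeta+1$ (so each $\psi_\kappa\in C^{\lbeta}$, is supported on $O((2\lbeta+3)^d)$ cells, and $\sum_\kappa\psi_\kappa\equiv1$), let $T_\kappa$ be the order-$\lbeta$ Taylor polynomial of $f$ at the centre of cell $\kappa$, and set $P=\sum_\kappa\psi_\kappa T_\kappa$. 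The hypothesis $\beta>2$ (hence $\lbeta\ge2$) is used here so that the truncated powers and B-splines involved have degree at least two and are therefore exactly representable by ReQU in Step~3.

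\textbf{Step: the approximation error (analytic core).} For each $\ell\in\{0,\dots,\lbeta\}$ and each multi-index $\bgamma$ with $|\bgamma|\le\ell$, I would use $\sum_\kappa\partial^{\balpha}\psi_\kappa\equiv0$ for $|\balpha|\ge1$ to write $\partial^{\bgamma}(f-P)=\sum_\kappa\partial^{\bgamma}\bigl(\psi_\kappa(f-T_\kappa)\bigr)$ and expand by the Leibniz rule. The leading term uses the Hölder Taylor remainder $\|f-T_\kappa\|_{C^\ell(\kappa)}\lesssim \tfrac{d^{\lbeta}}{\lbeta!}H(\sqrt d/K)^{\beta-\ell}$; combined with $\lbeta!\ge(\lbeta/e)^{\lbeta}$ this yields the first term $(\sqrt2 e d)^{\beta}H/K^{\beta-\ell}$ of \eqref{eq:approx_H_l_norm}. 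The remaining Leibniz terms carry derivatives of the $\psi_\kappa$; bounding each univariate B-spline derivative by powers of $2\lbeta+1$, multiplying over the $d$ coordinates, and summing over the $O((2\lbeta+3)^d)$ overlapping cells produces the second term with $9^{d(\lbeta-1)}(2\lbeta+1)^{2d+\ell}$. This all-order Leibniz bookkeeping with the sharp constants is where I expect to spend most of the work.

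\textbf{Step: exact ReQU realisation of $P$.} The identity $ab=\tfrac14\bigl(\sigma(a+b)+\sigma(-a-b)-\sigma(a-b)-\sigma(-a+b)\bigr)$ computes a product exactly in one ReQU layer; iterating it in a binary tree multiplies $m$ factors in $\lceil\log_2 m\rceil$ layers and in particular gives $(x-t)_+^{m}=(x-t)^{m-2}\,\sigma(x-t)$ for $m\ge2$. Hence each univariate degree-$(\lbeta+1)$ B-spline (a fixed linear combination of such truncated powers) is exact with $O(\lbeta)$ layers, each $\psi_\kappa$ (a product of $d$ of them) with $\lceil\log_2 d\rceil$ more layers, and each monomial of $T_\kappa$ likewise; a final product layer forms $\psi_\kappa T_\kappa$ and the output linear layer sums over $\kappa$. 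Running the $\le(K+\lbeta)^d$ cells in parallel gives width of order $4d(K+\lbeta)^d$ and at most $p(K+\lbeta)^d C(\beta,d,H)$ non-zero weights, and depth equal to the stated sum $6+2(\lbeta-2)+\lceil\log_2 d\rceil+2(\lceil\log_2(2d\lbeta+d)\vee\log_2\log_2 H\rceil\vee1)$, where the $\log_2(2d\lbeta+d)$ term accounts for summing the $O(d\lbeta)$ truncated-power contributions in a binary tree. Since products and sums are exact, $h_f:=P$ inherits exactly the error bounded in the previous step.

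\textbf{Step: keeping weights in $[-1,1]$.} The Taylor and B-spline coefficients are bounded by $H$ times explicit combinatorial factors; I would pull all bounded factors through the linear layers, and realise a residual scalar gain as large as $H$ by $\lceil\log_2\log_2 H\rceil$ successive squarings $z\mapsto z^2=\sigma(z)+\sigma(-z)$ of a seed of size $\le2$ (equivalently, spreading a factor $H^{1/2^j}$ over that many layers). This is the source of the $\log_2\log_2 H$ term in the depth and of the corresponding part of $C(\beta,d,H)$, while leaving every weight in $[-1,1]$. The main obstacle is the analytic step — getting the all-order estimate with exactly the constants $(\sqrt2 e d)^{\beta}$ and $9^{d(\lbeta-1)}(2\lbeta+1)^{2d+\ell}$; the secondary obstacle is fitting the $H$-dependent coefficients into $[-1,1]$ via the $\log\log H$ squaring gadget without exceeding the stated width, depth, and sparsity.
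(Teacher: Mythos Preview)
The paper does not prove this statement: it is quoted verbatim as \cite[Theorem~2]{belomestny22} and used as a black box in the proof of \Cref{thm:requ}. There is therefore no ``paper's own proof'' to compare your proposal against. Your sketch---spline/Taylor partition-of-unity approximation followed by exact ReQU realisation via the product identity, with a $\log\log H$ squaring gadget to keep weights in $[-1,1]$---is a plausible outline of how such a result is typically proved, and indeed matches the general strategy one expects in \citep{belomestny22}, but any assessment of whether your constants and layer counts line up would require consulting that reference rather than the present paper.
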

    
    We split the proof of \Cref{thm:requ} into several steps for convenience.

    \bigskip

    \noindent{\bf Step 1: bounding $\Delta_\G$.}\quad
	According to \Cref{lem:js_reg}, it holds that
    \[
        \Delta_\G
        = \min\limits_{w \in \Wset} \JS(\sfp_w, \pstar)
		\leq \min\limits_{w \in \Wset} \frac{\log 2}2 \int \frac{(\sfp_w(x) - \pstar(x))^2}{\sfp_w(x) + \pstar(x)} \rmd \mu.
    \]
    Note that under Assumptions \ref{assu:G'} and \ref{assu:AP}, the densities $\pstar$ and $\sfp_w$ are bounded away from zero and infinity.

    \begin{Lem}
	   \label{lem:pmin_pmax}
	   Let a map $g : [0, 1]^d \rightarrow [0, 1]^d$ be from the class $\H_\Lambda^2([0, 1]^d, H_\G)$ and let
	   \[
		  \sfp(x) = |\det[\nabla g(g^{-1}(x))]|^{-1},
		  \quad x \in [0, 1]^d.
	   \]
	   Then it holds that
	   \[
		  \sfp_{\min} \leq \sfp(x) \leq \sfp_{\max}
		  \quad \text{for any $x \in [0, 1]^d$,}
	   \]
	   where 
	   \begin{equation}
	   \label{eq:p_min_p_max_def}
            \sfp_{\min} = \Lambda^{-d}, \quad
    		\sfp_{\max} = \Lambda^d.
	   \end{equation}
    \end{Lem}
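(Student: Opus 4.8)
The plan is to read the bounds off directly from the $\Lambda$-regularity condition \eqref{eq:H_alpha_lambda_regular} that is built into the definition of the class $\H_\Lambda^2([0,1]^d, H_\G)$. Fix any $x \in [0,1]^d$ and set $y = g^{-1}(x)$; this is well defined because the generators in Assumption \ref{assu:G'} are non-degenerate (invertible) maps. From $g \in \H_\Lambda^2([0,1]^d, H_\G)$ we have $\Lambda^{-2}\Id_{d\times d} \preceq \nabla g(y)^\top \nabla g(y) \preceq \Lambda^2 \Id_{d\times d}$, so every eigenvalue of the symmetric positive definite matrix $\nabla g(y)^\top \nabla g(y)$ lies in the interval $[\Lambda^{-2}, \Lambda^2]$.

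Since the determinant of a symmetric positive definite matrix is the product of its $d$ eigenvalues, this gives $\Lambda^{-2d} \leq \det\bigl(\nabla g(y)^\top \nabla g(y)\bigr) \leq \Lambda^{2d}$. Using $\det\bigl(\nabla g(y)^\top \nabla g(y)\bigr) = \bigl(\det \nabla g(y)\bigr)^2$ and taking square roots, we obtain $\Lambda^{-d} \leq |\det \nabla g(y)| \leq \Lambda^d$. Taking reciprocals and recalling that $\sfp(x) = |\det[\nabla g(g^{-1}(x))]|^{-1} = |\det \nabla g(y)|^{-1}$, we conclude $\Lambda^{-d} \leq \sfp(x) \leq \Lambda^d$, which is exactly the assertion with $\sfp_{\min} = \Lambda^{-d}$ and $\sfp_{\max} = \Lambda^d$ as in \eqref{eq:p_min_p_max_def}. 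As a sanity check, $\Lambda > 1$ forces $\sfp_{\min} = \Lambda^{-d} < 1 < \Lambda^d = \sfp_{\max}$, consistent with $\sfp$ being a probability density on $[0,1]^d$.

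There is essentially no obstacle: the statement is a one-line consequence of the defining inequality of $\Lambda$-regular functions together with the multiplicativity of the determinant, and it holds uniformly in $x$ because the eigenvalue bounds hold uniformly in $y \in [0,1]^d$. The only point deserving a word is the existence of $g^{-1}(x)$; this is not proved here but is part of the standing setup of Section \ref{sec:example}, where generators are assumed non-degenerate precisely so that the change-of-variables formula \eqref{eq:pg_uniform} defining $\sfp_w$ is meaningful. If one prefers to avoid naming $g^{-1}$ explicitly, one can simply note that $\Lambda^{-d} \leq |\det \nabla g(y)| \leq \Lambda^d$ for every $y \in [0,1]^d$, hence the same two-sided bound holds for the reciprocal $|\det \nabla g(y)|^{-1}$, and by \eqref{eq:pg_uniform} the density $\sfp$ takes only values among these reciprocals.
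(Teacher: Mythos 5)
Your proof is correct and follows essentially the same route as the paper's: both read the two-sided bound off the $\Lambda$-regularity condition, bound $\det\bigl(\nabla g(y)^\top \nabla g(y)\bigr)$ between $\Lambda^{-2d}$ and $\Lambda^{2d}$ (you via eigenvalues, the paper via monotonicity of the determinant under the Loewner order, which is the same fact), and then take square roots and reciprocals. Nothing further is needed.
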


    The proof of \Cref{lem:pmin_pmax} is provided in \Cref{sec:pmin_pmax_proof}. \Cref{lem:pmin_pmax} implies that
    \[
        \Delta_\G \lesssim \Lambda^d \min\limits_{w \in \Wset} \left\| \sfp_w - \pstar \right\|_{L_\infty([0, 1]^d)}^2.
    \]
    Combining this bound with the result of \Cref{lem:pf-pg}, we obtain that
    \[
        \Delta_\G \lesssim \Lambda^{9d} \min\limits_{w \in \Wset} \left\| g_w - g^* \right\|_{\H^1([0, 1]^d)}^2.
    \]
    Let us introduce an auxiliary parameter
	\[
		K = \left\lceil \frac{n}{\log n} \right\rceil^{1 / (2\beta + d)}
	\]
	and set $N_\G$ and $d_\G$ as follows:
	\[
		N_\G = 6 + 2(\lbeta-1) + \lceil \log_{2}{d} \rceil + 2\left( \left\lceil \log_2 (2d\lbeta + 3d) \vee \log_2 \log_2 H^* \right\rceil \vee 1 \right),
	\]
	\begin{equation}
        \label{eq:s_G_bound}
		d_\G
		= d \; C(\beta + 1, d, H^*) (K + \lbeta + 1)^{d}
        \lesssim \left( \frac{n}{\log n} \right)^{d / (2\beta + d)},
	\end{equation}
    where the constant $C(\beta, d, H^*)$ is given by \eqref{eq:const_sparsity}.
    According to \Cref{prop:requ_approx}, there is a neural network $g_w$ of the width
	\[
        \bigl(4d (K + \lbeta + 1)^d \bigr) \vee 12 \left((K + 2\lbeta) + 3\right) \vee d,
    \]
    with ReQU activations, $N_\G$ hidden layers and at most $d_\G$ non-zero weights taking their values in $[-1, 1]$, such that
    \[
        \left\|g^* - g_w \right\|_{\H^1([0, 1]^d)}
        \leq \frac{ (\sqrt{2}e d)^{\beta + 1} H^*}{K^\beta} + \frac{9^{d\lbeta} (2\lbeta + 3)^{2d + 1} (\sqrt{2}ed)^{\beta + 1} H^*}{K^\beta}
        \lesssim \left( \frac{\log n}{n} \right)^{\beta / (2\beta + d)}
    \]
    and
    \[
        \left\|g^* - g_w \right\|_{\H^2([0, 1]^d)} \lesssim \left( \frac{\log n}{n} \right)^{(\beta - 1) / (2\beta + d)}.
    \]
    Hence, it holds that
    \begin{equation}
        \label{eq:delta_G_bound}
        \Delta_\G
        \lesssim \Lambda^{9d} \min\limits_{w \in \Wset} \left\| g_w - g^* \right\|_{\H^1([0, 1]^d)}^2
        \lesssim \Lambda^{9d} \left( \frac{\log n}{n} \right)^{2\beta / (2\beta + d)}.
    \end{equation}
    
    \bigskip
    
	\noindent{\bf Step 2: bounding $\Delta_\D$.}\quad
    According to \Cref{lem:Delta_D_lower_bound}, it holds that
    \[
        \Delta_\D
        \lesssim \max\limits_{w \in \Wset} \min\limits_{\theta \in \Tset} \left\| \frac{\pstar}{\pstar + \sfp_w} - D_\theta \right\|_{L_2(\pstar + \sfp_w)}^2
        \lesssim \max\limits_{w \in \Wset} \min\limits_{\theta \in \Tset} \left\| \frac{\pstar}{\pstar + \sfp_w} - D_\theta \right\|_{L_\infty([0, 1]^d)}^2.
    \]
    Our goal is to show that it is possible to approximate $\pstar / (\pstar + \sfp_w)$ with a neural network, taking its values in $[D_{\min}, D_{\max}]$. 
    Using \eqref{eq:pg_uniform}, \Cref{lem:pmin_pmax}, and the fact that $g_w \in \H^{\beta + 1}(H_\G, [0,1]^d)$, $g^* \in \H^{\beta + 1}(H^*, [0,1]^d)$, it is straightforward to check that, for any $w \in \Wset$,
	\[
		\frac{\pstar}{\pstar + \sfp_w} \in \H^\beta(H^\circ, [0, 1]^d),
	\]
	where $H^\circ$ is a constant, depending on $H_\G, H^*, \beta$, and $d$.
	Let us set the parameters $N_\D$, and $d_\D$ equal to
	\[
		N_\D = 6 + 2(\lbeta - 2) + \lceil \log_{2}{d} \rceil + 2\left( \left\lceil \log_2 (2d\lbeta + d) \vee \log_2 \log_2 H^\circ \right\rceil \vee 1 \right),
	\]
	\begin{equation}
        \label{eq:s_D_bound}
		d_\D
		= d \; C(\beta, d, H^*) (K + \lbeta)^{d}
        \lesssim \left( \frac{n}{\log n} \right)^{d / (2\beta + d)},
	\end{equation}
    where the constant $C(\beta, d, H^\circ)$ is defined in \eqref{eq:const_sparsity}.
    Then, according to \Cref{prop:requ_approx}, for any $w \in \Wset$, there exists $\theta(w) \in \Tset$ and a neural network $D_{\theta(w)}$
    of the width
	\[
        \bigl(4d (K + \lbeta)^d \bigr) \vee 12 \left((K + 2\lbeta) + 1\right),
    \]
    with ReQU activations, $L_\D$ hidden layers and at most $d_\D$ non-zero weights taking their values in $[-1, 1]$, such that
	\begin{align*}
		\left\| \frac{\pstar}{\pstar + \sfp_w} - D_{\theta(w)} \right\|_{L_\infty([0,1]^d)}
		&
        \leq \frac{ (\sqrt{2}e d)^{\beta} H^\circ}{K^\beta} + \frac{9^{d(\lbeta - 1)} (2\lbeta + 1)^{2d} (\sqrt{2}ed)^\beta H^\circ}{K^\beta}
        \\&
        \lesssim \left( \frac{\log n}{n} \right)^{2\beta / (2\beta + d)}.
	\end{align*}
    Thus, we have
    \begin{equation}
        \label{eq:delta_D_bound}
        \Delta_\D
        \lesssim \max\limits_{w \in \Wset} \min\limits_{\theta \in \Tset} \left\| \frac{\pstar}{\pstar + \sfp_w} - D_\theta \right\|_{L_\infty([0, 1]^d)}^2
        \lesssim \left( \frac{\log n}{n} \right)^{2\beta / (2\beta + d)}.
    \end{equation}
	
	\bigskip
	
	\noindent{\bf Step 3: final bound.}\quad
	According to \Cref{Lem:inf_norm_bound} and \Cref{Lem:inf_norm_bound_derivative}, it holds that
    \[
        \log(\sfl_\G \sfl_\Xset \vee \sfl_\Tset \vee \sfl_\sfp \vee 1)
        \lesssim \log (K \Lambda).
    \]
    This and the inequalities \eqref{eq:th_fast_rate_applied}, \eqref{eq:s_G_bound}, \eqref{eq:delta_G_bound}, \eqref{eq:s_D_bound}, and \eqref{eq:delta_D_bound} yield that
	\begin{align*}
		\JS(\sfp_{\widehat w}, \pstar)
		&
		\lesssim \sqrt{ \frac{(\Delta_\G + \Delta_\D)\big[(d_\G + d_\D) \log(2(\sfl_\G \sfl_\Xset \vee \sfl_\Tset \vee \sfl_\sfp \vee 1)n) + \log(8/\delta)\big]}{n}}
		\\&\quad
		+ C_{\ref{eq:c_d_min_d_max}} \cdot \frac{(d_\G + d_\D) \log(2(\sfl_\G \sfl_\Xset \vee \sfl_\Tset \vee \sfl_\sfp \vee 1)n) + \log(8/\delta)}{n}
        \\&
        \lesssim \sqrt{ \frac{(\Delta_\G + \Delta_\D)\big[(d_\G + d_\D) \log(2 K \Lambda} n) + \log(8/\delta)\big]}{n}
		\\&\quad
		+ C_{\ref{eq:c_d_min_d_max}} \cdot \frac{(d_\G + d_\D) \log(2 K \Lambda} n) + \log(8/\delta){n}
        \\&
        \lesssim \Lambda^{9d /2} \left( \frac{\log n}{n} \right)^{\frac{2\beta}{2\beta + d}}
		+ \frac{C_{\ref{eq:c_d_min_d_max}}}n \cdot \left(\log(2 \Lambda n) \left( \frac{n}{\log n} \right)^{\frac d{2\beta + d}} + \log(8/\delta)\right)
		\\&
		\lesssim \left( \Lambda^{9d /2} + C_{\ref{eq:c_d_min_d_max}} \log \Lambda \right) \left( \frac{\log n}{n} \right)^{2\beta / (2\beta + d)} + C_{\ref{eq:c_d_min_d_max}} \cdot \frac{\log(1/\delta)}n
	\end{align*}
    with probability at least $1 - \delta$. The proof is finished.

\subsection{Proof of Theorem \ref{th:lower_bound}}
\label{sec:lower_bound_proof}

\begin{Th}[restatement of Theorem \ref{th:lower_bound}]
	Let $(X_1, \dots, X_n)$ be a sample of i.i.d. observations generated from a density 
	$\pstar$ satisfying \Cref{assu:AP}.
	Then for any estimate $\widehat\sfp$  of $\pstar$,  that is, measurable function of \(X_1,\ldots,X_n,\)
	it holds that
	\[
		\sup\limits_{\pstar } \E \JS(\widehat\sfp, \pstar)
		\gtrsim n^{-2\beta / (2\beta + d)}
	\]
	with a hidden constant depending on $d$ only.
\end{Th}

	Let
	\[
		h = h(n) = \left( \frac{1}{n \Lambda^{d} d^2} \right)^{1 / (2\beta + d)}.
	\]
	and let $M$ be the $(2h)$-packing number of $[1/3, 2/3]^d$. It is clear that $M \gtrsim h^{-d}$.
	Let $\{x_1, \dots, x_M\} \subseteq [1/3, 2/3]^d$ be a $2h$-separable set, that is, $\|x_i - 
	x_j\| > 2h$ for all $i \neq j$.
 
	Let $\varphi : \R^d \rightarrow \R$ be a function from the class $\H^{\beta + 2}(\R^d, H_{\varphi})$ with some $H_{\varphi} > 0$, such that $\text{supp}(\varphi) = \B(0, 1) \subset \R^d$, $\varphi$ attains its maximum at $0$, and $-\nabla^2 \varphi (0) \succeq I_{d \times d}$.
	Consider a parametric class of generators $\{ g_\theta : \theta \in \B(0,h) \subset \R^M\}$, where
	\begin{equation*}
	    \label{eq:g_theta}
		g_\theta^{-1}(x) = x + h^\beta \sum\limits_{j=1}^M \theta_j \nabla \varphi \left( \frac{x - 
		x_j}h \right),
		\quad x \in [0, 1]^d.
	\end{equation*}
	Here and further in the proof, $g^{-1}$ stands for the inverse map of $g$. For convenience, we split the rest of the proof into several steps.

    \medskip

    \noindent\textbf{Step 1: verifying the conditions.}
    \quad
    First, note that, according to the definition of $g_\theta$, we have
    \[
        \nabla g_\theta^{-1}(x) = I_{d \times d} + h^{\beta - 1} \sum\limits_{j = 1}^M \theta_j \nabla^2 \varphi\left( \frac{x - x_j}h \right).
    \]
    Since $\varphi$ is supported on the unit ball $\B(0, 1)$, the last expression simplifies to
    \[
        \nabla g_\theta^{-1}(x)
        = \begin{cases}
            I_{d \times d} + h^{\beta - 1} \theta_j \nabla^2 \varphi\left( \frac{x - x_j}h \right), \quad \text{if $x \in \B(x_j , h)$ for some $j \in \{1, \dots, M\}$,}\\
            I_{d \times d}, \quad \text{otherwise.}
        \end{cases}
    \]
    The fact that $\varphi \in \H^{\beta + 2}(\R^d, H_{\varphi})$ immediately implies that
    \begin{equation}
        \label{eq:g_theta_derivative}
        e^{-1/d} I_{d \times d}
        \preceq \left(1 - h^\beta H_\varphi \right) I_{d \times d}
        \preceq \nabla g_\theta^{-1}(x)
        \preceq \left(1 + h^\beta H_\varphi \right) I_{d \times d}
        \preceq e^{1/d} I_{d \times d},
    \end{equation}
    provided that
    \[
        h^\beta \leq \frac{e^{1/d} - 1}{H_\varphi} \land \frac{1 - e^{-1/d}}{H_\varphi},
    \]
    that is, if the sample size $n$ is large enough.
    Hence, we checked that $g_\theta^{-1} \in \H_{e^{1/d}}^{\beta + 1}([0, 1]^d, H_0)$ for some $H_0 > 0$ and each $\theta \in \B(0, h) \subset \R^M$. This yields that, for all $\theta \in \B(0, h) \subset \R^M$, $g_\theta$ belongs to a class $\H_{e^{1/d}}^{\beta + 1}([0, 1]^d, H^\circ)$ with some $H^\circ > 0$.
	
    Besides, for each $\theta \in \B(0, h)$, $g_\theta$ differs from $g_0(x) \equiv x$ only on $[1/3-h, 2/3+h]^d$. If $n$ is sufficiently large, then $h$ is small and $g_\theta^{-1}([1/3-h, 2/3+h]^d) \subset [0, 1]^d$. Then $g_\theta^{-1}([0, 1]^d) \subseteq [0, 1]^d$. Similarly, we can show that $g_\theta([0, 1]^d) \subseteq [0, 1]^d$. Hence, for any $\theta \in \B(0, h)$, $g_\theta$ is indeed a bijection between $[0, 1]^d$ and $[0, 1]^d$, provided that $h$ is small enough.

    \medskip

    \noindent\textbf{Step 2: a minimax lower bound on the accuracy of parametric estimation.}
    The next auxiliary result provides a lower bound on the accuracy of estimation of the 
	parameter $\theta$.
	\begin{Lem}
		\label{lem:van_trees}
		Let $Y_1, \dots, Y_n$ be i.i.d. random elements on $\Yset$, $\text{Vol}(\Yset) = 1$, drawn from the 
		uniform distribution on $\Yset$ and assume that a learner observes a sample $(X_1, \dots, X_n)$ 
		where $X_i = g_\theta(Y_i), i \in \{1, \ldots, n\}$. Under the assumptions of \Cref{th:lower_bound}, if $n$ is large enough, for any estimate $\widehat \theta$ taking its 
		values in $\B(0, h) \subset \R^M$ it holds that
		\[
		\sup\limits_{\|\theta\| \leq h} \E_\theta \|\widehat\theta - \theta\|^2
		\gtrsim \frac{M h^2}{n h^{2\beta + d} d^2 + 1}.
		\]
	\end{Lem}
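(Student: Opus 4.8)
The plan is to pass from the minimax risk over the sub-model $\{\sfp_\theta:\theta\in\B(0,h)\}$ — where $\sfp_\theta$ is the density of $g_\theta(Y)$ with $Y$ uniform on $\Yset$ — to a Bayes risk, and then to invoke the multivariate van Trees inequality (see \citep[p.~72]{vt68} and \citep{gl95}). First I would compute the density of one observation: since $Y$ is uniform on a set of unit volume, the change-of-variables formula (as in \eqref{eq:pg_uniform}) gives $\sfp_\theta(x)=|\det\nabla g_\theta^{-1}(x)|$ on $[0,1]^d$, and differentiating $g_\theta^{-1}(x)=x+h^\beta\sum_{j=1}^M\theta_j\nabla\varphi((x-x_j)/h)$ in $x$ yields
\[
	\nabla g_\theta^{-1}(x)=\Id_{d\times d}+h^{\beta-1}\sum_{j=1}^M\theta_j\,\nabla^2\varphi\bigl((x-x_j)/h\bigr).
\]
Because $\text{supp}\,\varphi=\B(0,1)$ and the centres $x_1,\dots,x_M$ are $2h$-separated, the balls $\B(x_j,h)$ are pairwise disjoint, $\nabla g_\theta^{-1}\equiv\Id_{d\times d}$ outside their union, and on each $\B(x_j,h)$ only the $j$-th summand survives, so that $\nabla g_\theta^{-1}$ and $\sfp_\theta$ restricted to $\B(x_j,h)$ depend on $\theta_j$ alone. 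For $n$ large enough, $h=h(n)$ is small enough that $\|h^{\beta-1}\theta_j\,\nabla^2\varphi((x-x_j)/h)\|\leq h^\beta d\|\varphi\|_{C^2}<1/2$ for all $\theta\in\B(0,h)$; hence $\nabla g_\theta^{-1}$ is uniformly well conditioned (so $g_\theta$ is a genuine bijection and $\Lambda^{-d}\leq\sfp_\theta\leq\Lambda^d$ by the argument behind \Cref{lem:pmin_pmax}), and a Neumann expansion of $\log\det$ gives, for $x\in\B(x_j,h)$,
\[
	\frac{\partial}{\partial\theta_j}\log\sfp_\theta(x)=h^{\beta-1}\Tr\!\bigl[(\nabla g_\theta^{-1}(x))^{-1}\nabla^2\varphi((x-x_j)/h)\bigr]=h^{\beta-1}\bigl(\Delta\varphi((x-x_j)/h)+O(h^\beta)\bigr),
\]
while $\partial_{\theta_j}\log\sfp_\theta\equiv 0$ outside $\B(x_j,h)$.

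Next I would establish that the Fisher information matrix is diagonal and small. Since $\partial_{\theta_j}\log\sfp_\theta$ and $\partial_{\theta_k}\log\sfp_\theta$ have disjoint supports for $j\neq k$, the Fisher information of one observation is diagonal; by independence $I_n(\theta)=n\,I_1(\theta)$ with $I_1(\theta)=\diag{(I_1(\theta))_{11},\dots,(I_1(\theta))_{MM}}$ and $(I_1(\theta))_{jj}$ a function of $\theta_j$ alone. Using $|\Delta\varphi|\leq d\|\varphi\|_{C^2}$, the score is bounded on $\B(x_j,h)$ by $2d\|\varphi\|_{C^2}h^{\beta-1}$ once $h$ is small, and after the substitution $x=x_j+hu$,
\[
	(I_1(\theta))_{jj}=\int_{\B(x_j,h)}\bigl(\partial_{\theta_j}\log\sfp_\theta(x)\bigr)^2\sfp_\theta(x)\,\rmd x\leq C_\varphi\,\Lambda^d d^2 h^{2\beta+d-2},
\]
uniformly over $\theta\in\B(0,h)$, where $C_\varphi$ depends only on $\|\varphi\|_{C^2}$ and the volume of the unit $d$-ball.

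Then I would choose the prior and apply the inequality. Fix a smooth density $q$ on $[-1,1]$ vanishing at $\pm1$ with finite Fisher information $I_0=\int(q'/q)^2q<\infty$, and take the product prior $\lambda(\theta)=\prod_{j=1}^M(ch)^{-1}q(\theta_j/(ch))$ with $c$ a small constant so that $\text{supp}\,\lambda\subseteq\B(0,h)$ — admissible because the sub-model only constrains the coordinates of $\theta$ separately, so the marginals may be taken of length of order $h$. Its Fisher information matrix is $\diag{I_0/(c^2h^2),\dots,I_0/(c^2h^2)}$. Conditioning on the other coordinates $\theta_{-j}$ and applying the scalar van Trees inequality in $\theta_j$ — whose likelihood carries Fisher information $(I_n(\theta))_{jj}$, a function of $\theta_j$ only — and then using convexity of $t\mapsto1/t$, I obtain for every estimator $\widehat\theta$
\[
	\E_\lambda\E_\theta(\widehat\theta_j-\theta_j)^2\geq\frac1{\E_\lambda[(I_n(\theta))_{jj}]+I_0/(c^2h^2)}\geq\frac1{C_\varphi n\Lambda^d d^2h^{2\beta+d-2}+I_0/(c^2h^2)}.
\]
Summing over $j=1,\dots,M$ and bounding the supremum risk below by the Bayes risk gives
\[
	\sup_{\|\theta\|\leq h}\E_\theta\|\widehat\theta-\theta\|^2\geq\frac{M}{C_\varphi n\Lambda^d d^2h^{2\beta+d-2}+I_0/(c^2h^2)}=\frac{Mh^2}{C_\varphi n\Lambda^d d^2h^{2\beta+d}+I_0/c^2}\gtrsim\frac{Mh^2}{n h^{2\beta+d}\Lambda^d d^2+1},
\]
which is the asserted bound.

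The step I expect to be the main obstacle is the uniform estimate of the diagonal Fisher information entries: one must extract exactly the power $h^{2\beta+d-2}$ while keeping the dependence on the parameters at the stated order $\Lambda^d d^2$. This requires controlling $(\nabla g_\theta^{-1}(x))^{-1}$ via a Neumann series uniformly over the support of the prior — precisely the point where the assumption that $n$ is large is used, to keep the perturbation's operator norm below $1/2$ — together with the observation that the leading part of the trace is the Laplacian $\Delta\varphi$, bounded by $d\|\varphi\|_{C^2}$, the remainder being of lower order in $h$. A secondary point is to verify the regularity hypotheses of the van Trees inequality: integrability of the score, immediate from the smoothness of $\varphi$ and the two-sided bounds $\Lambda^{-d}\leq\sfp_\theta\leq\Lambda^d$, and the vanishing of the prior density at the boundary of its support, which holds by the choice of $q$.
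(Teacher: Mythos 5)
Your argument is essentially the paper's: you derive the same score $\partial_{\theta_j}\log\sfp_{g_\theta}(x)=h^{\beta-1}\Tr\bigl[A_\theta(x)^{-1}\nabla^2\varphi((x-x_j)/h)\bigr]$, bound the trace by $O(d)$ through the same Neumann-series argument (using that $h^{\beta-1}|\theta_j|\leq h^{\beta}$ is small for large $n$), integrate over $\B(x_j,h)$ with $\sfp_{g_\theta}\leq\Lambda^d$ to get the Fisher-information bound of order $\Lambda^d d^2h^{2\beta+d-2}$, and conclude with a van Trees inequality against a prior whose information is of order $M/h^2$. The only methodological difference is that you exploit the disjoint bump supports to diagonalize the Fisher matrix and apply the scalar van Trees inequality coordinate-by-coordinate with a product prior, whereas the paper applies the multivariate van Trees inequality once with a prior $h^{-M}\lambda_0(\cdot/h)$ supported on the ball; both give the same denominator $n\Lambda^d d^2 h^{2\beta+d-2}\cdot M+M/h^2$ per the $M^2$ numerator, hence the same bound.

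One point needs care. Your product prior is supported on the cube $[-ch,ch]^M$, which is \emph{not} contained in the Euclidean ball $\B(0,h)\subset\R^M$ unless $c\lesssim M^{-1/2}$; with that shrinkage your denominator becomes $nh^{2\beta+d}\Lambda^d d^2+M$ instead of $+1$, and your justification (``the sub-model only constrains the coordinates separately'') amounts to silently replacing $\B(0,h)$ by the cube $[-h,h]^M$. However, this is not a defect of your proof relative to the paper's: the paper's own bound $\J(\lambda)\lesssim M/h^2$ cannot hold with an $M$-independent constant for \emph{any} prior supported on $\B(0,h)\subset\R^M$, since the Cram\'er--Rao inequality gives $\J(\lambda)\geq\Tr(\Sigma^{-1})\geq M^2/\Tr(\Sigma)\geq M^2/h^2$ when the support lies in a ball of radius $h$; indeed, for the Euclidean-ball parameter set the asserted lower bound of order $Mh^2$ is contradicted by the trivial estimator $\widehat\theta\equiv0$, whose risk is at most $h^2$. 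The statement is really about the coordinate-wise constraint $|\theta_j|\leq h$, which is all that the regularity of $g_\theta$ and the remaining steps of the proof of \Cref{th:lower_bound} require, and under that reading your product-prior, coordinate-wise van Trees argument is correct — and arguably cleaner, since it makes the prior-information bookkeeping explicit.
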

	Here we write $\E_\theta$ to emphasize that the expectation is taken with respect 
	to the 
	probability measure $\PP_{g_\theta}^{\otimes n}$ where $\PP_{g_\theta}$ is a distribution of random element $X = g_{\theta}(Y)$ with $Y$ uniformly distributed on $[0,1]^d$. The proof of Lemma~\ref{lem:van_trees} is based on the van Trees inequality and it is postponed to \Cref{sec:van_trees_proof} 
	below.

    \medskip

    \noindent
    \textbf{Step 3: a minimax lower bound on the accuracy of density estimation.}
    Denote a density of the measure $\PP_{g_\theta}$ by $\sfp_{g_\theta}$. Our next goal is to convert the result of Lemma \ref{lem:van_trees} to the 
	lower bound on
	\[
		\inf\limits_{\widehat\theta} \sup\limits_{\|\theta\| \leq h} \E_\theta \|\sfp_{g_{\widehat\theta}}  - \sfp_{g_\theta} \|_{L_2(\Xset)}^2
	\]
	where the infimum is taken with respect to measurable functions $\widehat\theta$ of the sample satisfying assumptions of \Cref{lem:van_trees}. Recall that, for any $\theta \in \B(0, h)$, the log-density of r.v. $X = g_\theta(Y)$ 
	is given by
	\begin{equation}
	\label{eq:p_g_theta_density}
		\log \sfp_{g_\theta}(x)
		= \log \det\left( I_{d \times d} + h^{\beta - 1} \sum\limits_{j=1}^M \theta_j 
		\nabla^2 \varphi \left(\frac{x - x_j}h \right) \right).
	\end{equation}
	Before obtaining a minimax lower bound on $\sup\limits_{\|\theta\| \leq h} \E_\theta 
	\|\sfp_{g_{\widehat\theta}} - \sfp_{g_\theta} \|_{L_2(\Xset)}^2$, we prove some properties of the map $\theta \mapsto \log \sfp_{g_\theta}$.
	The next lemma shows that, for any $j \in \{1, \dots, M\}$, the partial derivative
	\[
		\frac{\partial \log \sfp_{g_\theta}(x)}{\partial \theta_j}
	\]
	is bounded away from zero in a vicinity of $x_j$.
	
	\begin{Lem}
		\label{lem:log_density}
		Let
		\[
			r_0 = \frac{h}{2(H_{\varphi} \vee 1)}.
		\]
		Then, under the assumptions of Theorem \ref{th:lower_bound}, for a sufficiently small $h > 0$ and any estimate $\widehat \theta$ taking its values in $\B(0, h) \subset \R^M$, it holds that
		\[
			\left| \frac{\partial \log \sfp_{g_\theta}(x)}{\partial \theta_j} \right|
			\geq h^{\beta - 1} d / 2, \quad \text{for all } x \in \B(x_j, r_0).
		\]
	\end{Lem}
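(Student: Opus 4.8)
The plan is to differentiate the explicit determinant formula \eqref{eq:p_g_theta_density} via Jacobi's formula and then exploit two structural features: the disjointness of the supports of the bump functions $\nabla^2\varphi((\cdot-x_k)/h)$, and the fact that the matrix appearing inside the trace is a near-identity perturbation because $|\theta_j|\le h$ is tiny.

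First I would write $\log\sfp_{g_\theta}(x)=\log\det M_\theta(x)$ with $M_\theta(x)=\Id_{d\times d}+h^{\beta-1}\sum_{k=1}^M\theta_k\nabla^2\varphi((x-x_k)/h)$, which is invertible for $h$ small since it is close to $\Id_{d\times d}$. By Jacobi's formula, $\partial_{\theta_j}\log\det M_\theta(x)=h^{\beta-1}\Tr\bigl(M_\theta(x)^{-1}\nabla^2\varphi((x-x_j)/h)\bigr)$. Since $\operatorname{supp}\varphi=\B(0,1)$, the matrix $\nabla^2\varphi((x-x_k)/h)$ vanishes unless $x\in\B(x_k,h)$, and the centers $x_1,\dots,x_M$ are $2h$-separated, so the balls $\B(x_k,h)$ are pairwise disjoint; for $x\in\B(x_j,r_0)\subseteq\B(x_j,h)$ only the $k=j$ term survives in $M_\theta(x)$. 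Hence on $\B(x_j,r_0)$ one has
\[
	\frac{\partial\log\sfp_{g_\theta}(x)}{\partial\theta_j}
	= h^{\beta-1}\Tr\Bigl((\Id_{d\times d}+h^{\beta-1}\theta_j A)^{-1}A\Bigr),
	\qquad A:=\nabla^2\varphi\Bigl(\tfrac{x-x_j}{h}\Bigr).
\]

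Next I would control $A$. Since $\varphi\in\H^{\beta+2}(\R^d,H_\varphi)$ and $\beta>2>1$, applying the $\H^{1+\beta}$ Lipschitz bound recalled just after \eqref{beta} to $\nabla\varphi\in\H^{\beta+1}(\R^d,H_\varphi)$ shows that every entry of the Hessian is $H_\varphi$-Lipschitz. For $x\in\B(x_j,r_0)$ we have $\|(x-x_j)/h\|\le r_0/h=1/(2(H_\varphi\vee1))$, so every entry of $A-\nabla^2\varphi(0)$ is at most $H_\varphi\cdot r_0/h\le 1/2$ in absolute value; together with $-\nabla^2\varphi(0)\succeq\Id_{d\times d}$ (so $\partial_i^2\varphi(0)\le-1$) this gives $A_{ii}\le-1/2$ for every $i$, hence $\Tr(A)\le-d/2$. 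Moreover $\|A\|$ is bounded by a constant depending only on $d$ and $H_\varphi$. Finally, because $|\theta_j|\le\|\theta\|\le h$, we get $\|h^{\beta-1}\theta_j A\|\le h^\beta\|A\|\to0$ as $h\to0$, so $(\Id_{d\times d}+h^{\beta-1}\theta_j A)^{-1}=\Id_{d\times d}+O(h^\beta)$ in operator norm and therefore $\Tr\bigl((\Id_{d\times d}+h^{\beta-1}\theta_j A)^{-1}A\bigr)=\Tr(A)+O(h^\beta)$, with the $O(\cdot)$ depending only on $d$ and $H_\varphi$. Consequently, for $h$ small enough, $\Tr\bigl((\Id_{d\times d}+h^{\beta-1}\theta_j A)^{-1}A\bigr)\le-d/2$, and the claimed bound $\bigl|\partial_{\theta_j}\log\sfp_{g_\theta}(x)\bigr|\ge h^{\beta-1}d/2$ follows for all $x\in\B(x_j,r_0)$, uniformly over $\theta\in\B(0,h)$.

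The main technical point is the last step: one has to verify that the matrix inverse $(\Id_{d\times d}+h^{\beta-1}\theta_j A)^{-1}$ contributes only a term of lower order in $h$, so that the estimate $\Tr(A)\le-d/2$ is not spoiled. This is precisely where the smallness of $h$ is used, and where the specific radius $r_0=h/(2(H_\varphi\vee1))$ matters, since it is tuned so that the diagonal entries of $A$ stay $\le-1/2$ throughout $\B(x_j,r_0)$.
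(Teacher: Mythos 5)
Your proposal is correct and follows essentially the same route as the paper: Jacobi's formula for the derivative of $\log\det$, the $2h$-separation of the centers so that only the $j$-th bump survives on $\B(x_j,r_0)$, the choice of $r_0$ so that the Hessian of $\varphi$ stays bounded below by $\tfrac12$ near $x_j$, and a Neumann-type perturbation argument showing the inverse contributes only $O(h^{\beta})$. The only (cosmetic) looseness — that the leading term gives exactly $d/2$ while the $O(h^{\beta})$ correction could in principle eat into it, so the clean constant $d/2$ really holds up to an adjustment of the constant for small $h$ — is present in the paper's own argument as well and does not affect the use of the lemma.
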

	
	The proof of \Cref{lem:log_density} is postponed until \Cref{sec:log_density_proof}. \Cref{lem:log_density} implies that, for any $j \in \{1, \dots, M\}$,
	\[
		\left| \frac{\partial \sfp_{g_\theta}(x)}{\partial \theta_j} \right|
		= \frac{1}{\sfp_{g_\theta}(x)} \left| \frac{\partial \log \sfp_{g_\theta}(x)}{\partial 
		\theta_j} 
		\right|
		\geq \frac{h^{\beta - 1} d}{2 \sfp_{g_\theta}(x)}, \quad \text{for all } x \in \B(x_j, r_0).
	\]
	The equality \eqref{eq:p_g_theta_density} implies that for $x \in \B(x_j, r_0)$, $\frac{\partial \sfp_{g_\theta}(x)}{\partial \theta_i} = 0$ for $i \neq j$. Hence, for $\theta,\theta' \in \B(0,h)$, it holds that
	\begin{align*}
		\|\sfp_{g_{\theta'}}  - \sfp_{g_\theta} \|_{L_2(\Xset)}^2
		&
		\geq \sum\limits_{j=1}^M \int\limits_{\B(x_j, r_0)} \left( \sfp_{g_{\theta'}}(x) - 
		\sfp_{g_\theta}(x) \right)^2 dx
		\\&
		\geq \sum\limits_{j=1}^M \int\limits_{\B(x_j, r_0)} \frac{h^{2\beta - 2} 
		d^2(\theta'_j - \theta_j)^2 }{4 \sfp_{g_{\vartheta(x)}}(x)^2} dx
	\end{align*}
	for some $\vartheta(x) = t(x) \theta + (1 - t(x)) \theta'$, $t(x) \in [0, 1]$. The inequality \eqref{eq:g_theta_derivative} and \Cref{lem:pmin_pmax} imply that
	\begin{align*}
		\|\sfp_{g_{\theta'}}  - \sfp_{g_\theta} \|_{L_2(\Xset)}^2
		&
		\geq \sum\limits_{j=1}^M \int\limits_{\B(x_j, r_0)} \frac{h^{2\beta - 2}
		d^2}{4 e^2} (\theta'_{j} - \theta_j)^2 dx
		\\&
		\gtrsim \sum\limits_{j=1}^M \left(\frac{r_0}h\right)^d h^{2\beta - 2 + d} \; (\theta'_{j} - \theta_j)^2
		\\&
		\gtrsim h^{2\beta - 2 + d} \; \|\theta' - \theta\|^2,
	\end{align*}
	where we write $\gtrsim$ for inequality up to an absolute constant in power $d$. Hence, \Cref{lem:van_trees} implies that
	\begin{align*}
		\sup\limits_{\|\theta\| \leq h} \E_\theta 
		\|\sfp_{g_{\widehat\theta}}  - \sfp_{g_\theta} \|_{L_2(\Xset)}^2
		&
		\gtrsim h^{2\beta - 2 + d} \; 
		\sup\limits_{\|\theta\| \leq h} \E_\theta \|\widehat\theta - \theta\|^2
		\\&
		\gtrsim \frac{h^{2 \beta}}{n h^{2\beta + d} d^2 + 1}
	\end{align*}
	for any estimate $\widehat \theta$ taking its values in $\B(0, h)$.
	Since $h = \left(n d^2\right)^{- 1/(2\beta + d)}$, we get
	\begin{equation}
		\label{eq:parametric_lower_bound}
		\sup\limits_{\|\theta\| \leq h} \E_\theta 
		\|\sfp_{g_{\widehat\theta}}  - \sfp_{g_\theta} \|_{L_2(\Xset)}^2
		\gtrsim n^{-2\beta / (2\beta + d)},
	\end{equation}
    where the hidden constant depends on $d$ only.
	
    \medskip

    \noindent
    \textbf{Step 4: a minimax lower bound on the accuracy of nonparametric estimation.}\quad
    We now prove that \eqref{eq:parametric_lower_bound} yields
	\[
		\inf\limits_{\widehat\sfp} \sup\limits_{\pstar} \E \|\widehat\sfp - \pstar\|_{L_2(\Xset)}^2
		\gtrsim n^{-2\beta / (2\beta + d)},
	\]
	where $\pstar$ satisfies \Cref{assu:AP} and $\widehat\sfp$ is any estimate of $\pstar$.
	Let $C_{d}$ be the hidden constant in \eqref{eq:parametric_lower_bound}, that is,
	\begin{equation}
		\label{eq:parametric_lower_bound_expl}
		\sup\limits_{\|\theta\| \leq h} \E_\theta 
		\|\sfp_{g_{\widehat\theta}}  - \sfp_{g_\theta} \|_{L_2(\Xset)}^2
		\geq C_{d} \, n^{-2\beta / (2\beta + d)}.
	\end{equation}
	Let $\widehat\sfp$ be an arbitrary estimate. Since $\sup\limits_{\pstar} \E \|\widehat\sfp - \pstar\|_{L_2(\Xset)}^2 \geq \sup\limits_{\|\theta\| \leq h} \E_\theta 
	\|\widehat\sfp  - \sfp_{g_\theta} \|_{L_2(\Xset)}^2$, it is enough to show that
	\[
		\sup\limits_{\|\theta\| \leq h} \E_\theta \|\widehat\sfp  - \sfp_{g_\theta} 
		\|_{L_2(\Xset)}^2
		\gtrsim n^{-2\beta / (2\beta + d)}.
	\]
	Let us introduce
	\[
		\widetilde\theta \in \argmin\limits_{\|\theta_0\| \leq h} \|\widehat\sfp  - 
		\sfp_{g_{\theta_0}} \|_{L_2(\Xset)}^2.
	\]
	If
	\[
		\sup\limits_{\|\theta\| \leq h} \E_\theta \|\widehat\sfp  - \sfp_{g_{\widetilde \theta}} \|_{L_2(\Xset)}^2
		\geq \frac{C_{d}}{4} \, n^{-2\beta / (2\beta + d)},
	\]
	then, by the definition of $\widetilde \theta$,
	\begin{align*}
		\sup\limits_{\|\theta\| \leq h} \E_\theta \|\widehat\sfp  - \sfp_{g_{\theta}} 
		\|_{L_2(\Xset)}^2 
		&
		\geq \sup\limits_{\|\theta\| \leq h} \E_\theta \|\widehat\sfp  - \sfp_{g_{\widetilde 
		\theta}} \|_{L_2(\Xset)}^2
		\\&
		\geq \frac{C_{d}}{4} \, n^{-2\beta / (2\beta + d)}.
	\end{align*}
	On the other hand, if
	\[
		\sup\limits_{\|\theta\| \leq h} \E_\theta \|\widehat\sfp  - \sfp_{g_{\widetilde \theta}} \|_{L_2(\Xset)}^2
		\leq \frac{C_{d}}{4} \, n^{-2\beta / (2\beta + d)},
	\]
	then the Cauchy-Schwarz inequality and \eqref{eq:parametric_lower_bound_expl} 
	yield
	\begin{align*}
		\sup\limits_{\|\theta\| \leq h} \E_\theta \|\widehat\sfp  - \sfp_{g_{\theta}} 
		\|_{L_2(\Xset)}^2 
		&
		\geq \frac12 \sup\limits_{\|\theta\| \leq h} \E_\theta \|\sfp_{g_\theta}  - 
		\sfp_{g_{\widetilde \theta}} \|_{L_2(\Xset)}^2 - \sup\limits_{\|\theta\| \leq h} 
		\E_\theta \|\widehat\sfp  - \sfp_{g_{\widetilde \theta}} \|_{L_2(\Xset)}^2
		\\&
		\geq \frac{C_{d}}{4} \, n^{-2\beta / (2\beta + d)}.
	\end{align*}
	Finally, \Cref{lem:js_reg}, \Cref{lem:pmin_pmax}, and \eqref{eq:g_theta_derivative} yield that, for any $\widehat\sfp$, it holds almost surely
	\[
		 \JS(\widehat\sfp, \pstar)
		 \geq \frac{\|\widehat\sfp - \pstar\|_{L_2(\Xset)}^2}{8 e}.
	\]
	Hence, for any $\widehat\sfp$, we obtain that
	\[
		\sup\limits_{\pstar} \E \JS(\widehat\sfp, \pstar)
		\geq 
		\sup\limits_{\pstar} \frac{\E \|\widehat\sfp - \pstar\|_{L_2(\Xset)}^2}{8 e}
		\gtrsim C_{d} \, n^{-2\beta / (2\beta + d)},
	\]
	where $\gtrsim$ stands for an inequality up to an absolute constant.

\section{Conclusion and future directions}

Despite the huge recent interest to theoretical properties of generative adversarial networks, the existing papers mostly focus on the generalization ability of GANs, missing the issues occurring in their practical use. For instance, one of the most challenging problems in the GAN training process is the mode collapse phenomenon \citep{salimans2016improved,che2016mode}, which appears for various loss functions, including Wasserstein GANs, ``vanilla'' GANs, or GANs with other divergence measures. Moreover, in the present literature on Wasserstein GANs, the estimation error is mostly governed by the rates of convergence of the empirical measure to the population distribution. This leaves a question, whether WGANs are able to produce a distribution estimate, which is strictly better that just the empirical distribution. The first step in this direction was made in the recent work \citep{vardanyan23}, where the authors imposed regularity conditions on generators. This agrees with the papers on vanilla GANs in a sense that successful generalization requires smoothness of generators and discriminators. Practitioners often use regularization (see, e.g., \cite{gulrajani17}) for this purpose.

\acks{The publication was supported by the grant for research centers in the field of AI provided by the Analytical Center for the Government of the Russian Federation (ACRF) in accordance with the agreement on the provision of subsidies (identifier of the agreement 000000D730321P5Q0002) and the agreement with HSE University \textnumero70-2021-00139.}

\bigskip

\bibliography{bibliography}

\appendix

\section{Properties of smooth maps of random elements with smooth densities}
\label{app:densities}

\subsection{Proof of \Cref{lem:pf-pg}}
\label{sec:pf-pg_proof}

Due to \cref{assu:G'}, $\nabla g_w(y)$ is non-degenerate for all $w \in \Wset$ and all $y \in [0, 1]^d$.
Thus, $\det \nabla g_w$ does not change its sign, and, without loss of generality, we can 
assume that $\det \nabla g_u(g_u^{-1}(x))$ and $\det \nabla g_v(g_v^{-1}(x))$ are positive for all $x \in [0, 1]^d$.
Let $z = g_u^{-1}(x) \in [0, 1]^d$.
Then $x = g_u(z)$ and 
\begin{align*}
	\left\|g_u^{-1}(x) - g_w^{-1}(x)\right\|
	&
	= \left\| g_u^{-1}( g_u(z) ) - g_v^{-1}( g_u(z) ) \right\|
	= \left\| g_v^{-1}( g_v(z) ) - g_v^{-1}( g_u(z) ) \right\|
	\\&
	\leq \Lambda \| g_v(z) - g_u(z) \|
	\leq \Lambda \sqrt{d} \|g_v - g_u\|_{L_{\infty}([0, 1]^d)}
\end{align*}
by the mean value theorem for vector valued functions.
Furthermore, we have
\begin{align*}
	\bigl|\sfp_{u}(x) - \sfp_{v}(x)\bigr|
	&
	= \left| \det[\nabla g_u(g_u^{-1}(x))]^{-1} - \det[\nabla g_v(g_v^{-1}(x))]^{-1} \right|
	\\&
	\leq \min\left\{ \det[\nabla g_u(g_u^{-1}(x))], \det[\nabla g_v(g_v^{-1}(x))] \right\}^{-2}
	\\&\quad
	\cdot \left|\det \nabla g_u(g_u^{-1}(x)) - \det \nabla g_v(g_v^{-1}(x)) \right|
	\\&
	\leq \Lambda^{2d}\left|\det\nabla g_u(g_u^{-1}(x)) - \det\nabla g_v(g_v^{-1}(x))\right|.
\end{align*}
Here we used the fact that, due to \Cref{assu:G'},
\begin{align*}
	\det [\nabla g_u(g_u^{-1}(x))]^{-1}
	&
	= \sqrt{\det \left( [\nabla g_u(g_u^{-1}(x))]^{-\top} [\nabla g_u(g_u^{-1}(x))]^{-1} \right)}
	\\&
	\leq \sqrt{\det(\Lambda^2 \Id_{d\times d})}
	= \Lambda^d,
\end{align*}
and, similarly, $\det [\nabla g_v(g_v^{-1}(x))]^{-1} \leq \Lambda^d$.
Next, since for any $d \times d$ matrices $A$ and $B$ it holds that
\begin{align*}
	\left| \det A - \det B \right|
	&
	\leq \|A - B\|_{\mathrm{F}} \frac{\|A\|_{\mathrm{F}}^d - 
	\|B\|_{\mathrm{F}}^d}{\|A\|_{\mathrm{F}} - \|B\|_{\mathrm{F}}}
	\\&
	\leq d \max\{\|A\|_{\mathrm{F}}^d, \|B\|_{\mathrm{F}}^d\} \|A - B\|_{\mathrm{F}}
\end{align*}
and
\begin{align*}
	&
	\|\nabla g_u(g_u^{-1}(x))\|_{\mathrm{F}}^2 = \text{Tr}\left( \nabla g_u(g_u^{-1}(x))^\top \nabla 
	g_u(g_u^{-1}(x)) \right)
	\leq \Lambda^2 d,
	\\&
	\|\nabla g_v(g_v^{-1}(x))\|_{\mathrm{F}}^2 = \text{Tr}\left( \nabla g_v(g_v^{-1}(x))^\top \nabla 
	g_v(g_v^{-1}(x)) \right)
	\leq \Lambda^2 d,
\end{align*}
we obtain that
	\begin{align*}
	\left|\det \nabla g_u(g_u^{-1}(x)) - \det \nabla g_v(g_v^{-1}(x)) \right|
	&
	\leq \Lambda^{d} d^{1 + d/2} \left\Vert \nabla g_u(g_u^{-1}(x)) - \nabla g_v(g_v^{-1}(x)) \right\Vert_{\mathrm{F}}
	\\&
	\leq \Lambda^{d} d^{1 + d/2} \left\Vert \nabla g_u(g_u^{-1}(x)) - \nabla g_v(g_u^{-1}(x))\right\Vert_{\mathrm{F}}
	\\&\quad
	+ \Lambda^{d} d^{1 + d/2} \left\Vert \nabla g_v(g_u^{-1}(x)) - \nabla g_v(g_v^{-1}(x))\right\Vert_{\mathrm{F}}
	\\&
	\leq \Lambda^{d} d^{2 + d/2} \|g_u - g_v\|_{\H^1(\Yset)}
	\\&\quad
	+ \Lambda^{d} d^{2 + d/2} H_\G \|g_u^{-1}(x) - g_v^{-1}(x)\|
	\\&
	\leq \Lambda^{d} d^{2 + d/2} (1 + H_\G \Lambda \sqrt d) \|g_u - g_v\|_{\H^1(\Yset)}.
\end{align*}
Hence,
\begin{align*}
	\|\sfp_u - \sfp_v\|_{L_\infty(\Xset)}
	\leq d^{2 + d/2} \Lambda^{3d} (1 + H_\G \Lambda \sqrt d) \|g_u - g_v\|_{\H^1(\Yset)}.
\end{align*}

\subsection{Proof of \Cref{lem:pmin_pmax}}
\label{sec:pmin_pmax_proof}

According to the definition of $\H_\Lambda^2([0, 1]^d, H_\G)$, we have $\Lambda^{-2} I_{d \times d} \preceq \nabla g(y)^\top \nabla g(y) \preceq \Lambda^2 I_{d \times d}$. Hence,
\[
	\Lambda^{-d}
	= \sqrt{\det(\Lambda^{-2} \Id_{d \times d})}
	\leq \sqrt{\det\left(\nabla g(g^{-1}(x))^\top \nabla g(g^{-1}(x)) \right)}
	\leq \sqrt{\det(\Lambda^{2} \Id_{d \times d})}
	= \Lambda^d.
\]
Then the equality
\[
	\sfp(x) = |\det \nabla g(g^{-1}(x))|^{-1}
\]
yields that
\[
	\Lambda^{-d} \leq \sfp(x) \leq \Lambda^d.
\]

\section{Some properties of feed-forward neural networks with ReQU activations}

Let $N \in \mathbb{N}, \A = (p_0, p_1, \dots, p_{N+1}) \in \mathbb{N}^{N+2}$, and $f(x): \R^{p_0} \rightarrow \R^{p_{N+1}}$ be a neural network
\begin{align*}
f(x) = W_N \circ \sigma_{v_{N}} \circ W_{L-1} \circ \sigma_{v_{L-1}} \circ \dots \circ \sigma_{v_1} \circ W_0 \circ x\,,
\end{align*}
where $W_i \in \R^{p_{i+1} \times p_i}$ are weight matrices. Similarly to \citep[Lemma 5]{schmidt-hieber2020}, we introduce the following notations. For $k \in \{1,\dots,N+1\}$, $i \leq k$, $k \leq j \leq N$, we define functions $\BBB_{k,i}(x): \R^{p_{i-1}} \rightarrow \R^{p_{k}}$ and $\AAA_{j,k}f(x): \R^{p_{k-1}} \rightarrow \R^{p_{j+1}}$ as follows
\begin{equation}
\label{eq:A_plus_A_minus_def}
\begin{split}
\BBB_{k,i}(x) &= \requ_{v_{k}} \circ W_{k-1} \circ \requ_{v_{k-1}} \dots \circ \requ_{v_i} \circ W_{i-1} \circ x; \\
\AAA_{j,k}(x) &= W_{j}  \circ \requ_{v_{j}} \dots \circ W_{k} \circ \requ_{v_{k}} \circ W_{k-1} \circ x\,.
\end{split}
\end{equation}
Set by convention $\AAA_{N,N+2}(x) = \BBB_{0,1}f(x) = x$. For notation simplicity, we write $\AAA_{j}$ instead of $\AAA_{N,j}$, and $\BBB_{j}$ instead of $\BBB_{j,1}$. Note that with this notation $f(x) = \AAA_{1}(x)$.
\par
Let us introduce the functions 
\begin{equation}
\label{eq:f_one_f_two_def}
\begin{split}
\fone(x) &= \Wone_N \circ \requ_{\vone_{N}} \circ \Wone_{N-1} \circ \requ_{\vone_{N-1}} \circ \dots \circ \requ_{\vone_1} \circ \Wone_0 \circ x\,, \\
\ftwo(x) &= \Wtwo_N \circ \requ_{\vtwo_{N}} \circ \Wtwo_{N-1} \circ \requ_{\vtwo_{N-1}} \circ \dots \circ \requ_{\vtwo_1} \circ \Wtwo_0 \circ x\,,
\end{split}
\end{equation}
where the parameters $\Wone_{i}, \Wtwo_{i}, \vone_{i}, \vtwo_{i}$ satisfy
\[
\norm{\Wone_{i} - \Wtwo_{i}}_{\infty} \leq \eps, \quad \norm{\vone_{i} - \vtwo_{i}}_{\infty} \leq \eps\,, \quad \text{for all } i \in \{0,\dots,N\}\,.
\]

\subsection{Proof of \Cref{Lem:inf_norm_bound}}
\label{sec:inf_norm_bound}

Before we prove \Cref{Lem:inf_norm_bound}, we need the following auxiliary result.

\begin{Lem}
\label{lem:infty_norm_a_plus}
Let $x \in \R^{d}$, $\norm{x}_{\infty} \leq \Kxnorm$. Then for $k, i \in \{1,\dots,N\}, \, k \geq i$ 
\begin{equation}
\label{eq:A_plus_infty_bound}
\norm{\BBB_{k,i}(x)}_{\infty} \leq \biggl\{\prod_{\ell=1}^{k-i+1}(p_{k-\ell}+1)^{2^{\ell}}\biggr\} (\Kxnorm \vee 1)^{2^{k-i+1}}\,,
\end{equation}
where $\BBB_{k,i}(x)$ are defined in \eqref{eq:A_plus_A_minus_def}. Moreover, function $\AAA_{j,k}(x)$ is Lipshitz for $x,y \in \R^d: \norm{x}_{\infty} \leq \Kxnorm, \norm{y}_{\infty} \leq \Kxnorm$, that is,
\begin{equation}
\label{eq:A_minus_lip_bound}
\norm{\AAA_{j,k}(x) - \AAA_{j,k}(y)}_{\infty} \leq 2^{j-k+1} \prod_{\ell=0}^{j-k+1}(p_{j-\ell}+1)^{2^{\ell}}(\Kxnorm \vee 1)^{2^{j-k+1}}\norm{x-y}_{\infty}\,. 
\end{equation}
\end{Lem}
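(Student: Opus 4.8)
The plan is to prove \Cref{lem:infty_norm_a_plus} by induction on the number of layers involved, exploiting the fact that the ReQU activation $\sigma(t) = (t\vee 0)^2$ is quadratic, so each application of a shifted activation followed by a weight matrix roughly squares the magnitude and multiplies by a factor controlled by the layer width. First I would record the two elementary estimates on which everything rests: (i) for a vector $z \in \R^{p}$ and a shift $v \in [-1,1]^p$, the bound $\|\sigma_v(z)\|_\infty \le (\|z\|_\infty + 1)^2$, which follows since each coordinate $\sigma(z_j - v_j) = ((z_j - v_j)\vee 0)^2 \le (|z_j| + 1)^2$; and (ii) for a weight matrix $W \in [-1,1]^{q\times p}$ and a vector $z \in \R^p$, the bound $\|Wz\|_\infty \le p\,\|z\|_\infty$, since each entry of $Wz$ is a sum of $p$ terms each bounded by $\|z\|_\infty$. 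Combining (i) and (ii): applying $W_{k-1}$ after $\sigma_{v_k}$ to a vector of sup-norm at most $R$ produces a vector of sup-norm at most $p_{k-1}(R+1)^2 \le p_{k-1}(R\vee 1)^2 \cdot 4 \le (p_{k-1}+1)^2 (R\vee 1)^2$, using $(R+1)^2 \le (2(R\vee1))^2 = 4(R\vee 1)^2 \le (p_{k-1}+1)^2(R\vee1)^2$ whenever $p_{k-1}\ge 1$.

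\medskip

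\noindent\textbf{Proof of the first bound \eqref{eq:A_plus_infty_bound}.} I would induct on $m := k - i + 1 \ge 1$. For the base case $m=1$, $\BBB_{k,k}(x) = \sigma_{v_k} \circ W_{k-1} \circ x$; since $\|W_{k-1}x\|_\infty \le p_{k-1}\|x\|_\infty \le p_{k-1}(\K\vee1)$, estimate (i) gives $\|\BBB_{k,k}(x)\|_\infty \le (p_{k-1}(\K\vee1) + 1)^2 \le (p_{k-1}+1)^2(\K\vee1)^2$, matching the claimed formula with $\ell$ running over $\{1\}$. For the inductive step, observe that $\BBB_{k,i}(x) = \sigma_{v_k}\circ W_{k-1}\circ \BBB_{k-1,i}(x)$, and apply the combined estimate above with $R = \|\BBB_{k-1,i}(x)\|_\infty$: by the induction hypothesis $R \le \{\prod_{\ell=1}^{m-1}(p_{k-1-\ell}+1)^{2^\ell}\}(\K\vee1)^{2^{m-1}}$, so $\|\BBB_{k,i}(x)\|_\infty \le (p_{k-1}+1)^2 (R\vee1)^2$, and squaring the product bound (which is $\ge 1$) yields exactly $\{\prod_{\ell=1}^{m}(p_{k-\ell}+1)^{2^\ell}\}(\K\vee1)^{2^m}$ after reindexing. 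This closes the induction.

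\medskip

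\noindent\textbf{Proof of the Lipschitz bound \eqref{eq:A_minus_lip_bound}.} Here I would induct on $m := j - k + 1 \ge 0$, reading $\AAA_{j,k}$ from the outside in as $\AAA_{j,k}(x) = W_j \circ \sigma_{v_j} \circ \cdots \circ W_k \circ \sigma_{v_k} \circ W_{k-1} \circ x$. The key inequality is Lipschitz-ness of $t \mapsto \sigma(t)$ on a bounded interval: if $|a|, |b| \le M$ then $|\sigma(a) - \sigma(b)| = |(a\vee0)^2 - (b\vee0)^2| \le 2M|a-b|$, hence $\|\sigma_v(z) - \sigma_v(z')\|_\infty \le 2(\|z\|_\infty \vee \|z'\|_\infty + 1)\|z - z'\|_\infty$. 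The base case $m=0$ is $\AAA_{k,k}(x) = W_k \circ \sigma_{v_k}\circ W_{k-1}\circ x$; one writes $\|W_{k-1}x\|_\infty \le p_{k-1}(\K\vee1) \le p_{k-1}$ (here one should absorb $\K\vee1$ consistently — I would just carry $(\K\vee1)$ through), applies the $\sigma_v$-Lipschitz bound with $M = p_{k-1}(\K\vee1)$, then applies $W_k$ with its factor $p_k$, producing a constant of the claimed shape with $\ell$ ranging over $\{0,1\}$. For the inductive step, peel off the outermost block: $\AAA_{j,k}(x) = W_j \circ \sigma_{v_j} \circ \AAA_{j-1,k}(x)$ — wait, that is not quite the right decomposition; rather $\AAA_{j,k} = W_j \circ \sigma_{v_j} \circ (W_{j-1}\circ \sigma_{v_{j-1}} \circ \cdots \circ W_{k-1})$, so I would instead peel from the \emph{inside}: set $u = W_{k-1}x$, $u' = W_{k-1}y$, so $\|u-u'\|_\infty \le p_{k-1}\|x-y\|_\infty$, and then $\AAA_{j,k}(x) = \widetilde\AAA(\sigma_{v_k}(u))$ where $\widetilde\AAA = \AAA_{j,k+1}\circ W_k$ is an $m-1$-layer map; apply the $\sigma_v$-Lipschitz bound (with $M$ controlled by \eqref{eq:A_plus_infty_bound}) and the induction hypothesis for $\widetilde\AAA$, then track how the width factors and the doubling constants $2^\ell$ combine.

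\medskip

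\noindent\textbf{Main obstacle.} The delicate point is bookkeeping: making sure the product $\prod_{\ell=0}^{j-k+1}(p_{j-\ell}+1)^{2^\ell}$ with its doubly-exponential exponents, the prefactor $2^{j-k+1}$, and the power $(\K\vee1)^{2^{j-k+1}}$ all come out with \emph{exactly} the stated exponents after each peeling step — the squaring from ReQU makes the exponents of both the width factors and $(\K\vee1)$ double at each layer, and one must verify the two cross terms (the new $\sigma_v$-Lipschitz factor $2(M+1)$ and the new weight factor $p_j$) fold into the existing product without loss. I would handle this by being generous and using $(a+1) \le (p+1)(a\vee1)$ type slack at each step rather than tight constants, which is exactly the reason the bound has the somewhat lossy form stated. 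Once \Cref{lem:infty_norm_a_plus} is in hand, \Cref{Lem:inf_norm_bound} itself follows by writing $\fone(x) - \ftwo(x)$ as a telescoping sum over layers $\ell = 0,\dots,N$ of terms in which all layers below $\ell$ use the $\fone$-parameters and all above use the $\ftwo$-parameters, bounding each telescoping difference by the $\eps$-perturbation of a single weight matrix or shift vector, propagated forward by \eqref{eq:A_minus_lip_bound} and evaluated at a point bounded by \eqref{eq:A_plus_infty_bound} — but that is the content of the next subsection, not of the present lemma.
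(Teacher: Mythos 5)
Your treatment of the sup-norm bound \eqref{eq:A_plus_infty_bound} is correct and is essentially the paper's own argument: base case $\BBB_{i,i}=\requ_{v_i}\circ W_{i-1}x$ and the recursion $\norm{\BBB_{k,i}(x)}_{\infty}\leq\bigl(p_{k-1}\norm{\BBB_{k-1,i}(x)}_{\infty}+1\bigr)^2$, absorbed via $(ab+1)\leq(a+1)(b\vee 1)$.

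The gap is in the Lipschitz bound \eqref{eq:A_minus_lip_bound}. First, the decomposition you wrote down and then dismissed, $\AAA_{j,k}=W_j\circ\requ_{v_j}\circ\AAA_{j-1,k}$, is in fact exactly right (it is what the paper uses, through $\AAA_{j,k}=W_j\circ\BBB_{j,k}$ and $\BBB_{j,k}=\requ_{v_j}\circ\AAA_{j-1,k}$): peeling the \emph{outermost} block gives $\norm{\AAA_{j,k}(x)-\AAA_{j,k}(y)}_{\infty}\leq 2p_j\bigl(\norm{\AAA_{j-1,k}(x)}_{\infty}\vee\norm{\AAA_{j-1,k}(y)}_{\infty}\bigr)\norm{\AAA_{j-1,k}(x)-\AAA_{j-1,k}(y)}_{\infty}$, the sup-norm factor is controlled by part one (times $p_{j-1}$), the induction hypothesis for $\AAA_{j-1,k}$ is invoked at the \emph{original} radius $\Kxnorm$, and the exponents telescope exactly to the stated constant. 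Second, the inside-out peeling you chose instead does not land on \eqref{eq:A_minus_lip_bound} as stated. The correct identity is $\AAA_{j,k}(x)=\AAA_{j,k+1}\bigl(\requ_{v_k}(W_{k-1}x)\bigr)$ (your $\widetilde{\AAA}=\AAA_{j,k+1}\circ W_k$ double-counts $W_k$, since $\AAA_{j,k+1}$ already ends with $W_k$), and the induction hypothesis must then be applied at the inflated radius $\Kxnorm'=(p_{k-1}\Kxnorm+1)^2\leq(p_{k-1}+1)^2(\Kxnorm\vee 1)^2$; carrying this through gives a constant of order $2^{j-k+1}\prod_{\ell=0}^{j-k}(p_{j-\ell}+1)^{2^{\ell}}\,(p_{k-1}+1)^{2^{j-k+1}+2}(\Kxnorm\vee 1)^{2^{j-k+1}+1}$, i.e.\ an extra factor of roughly $(p_{k-1}+1)^2(\Kxnorm\vee 1)$ beyond the claimed bound \eqref{eq:A_minus_lip_bound}. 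This is precisely the exponent bookkeeping you flagged as the ``main obstacle'' and left unverified, so as written the second half of the lemma is not established; reverting to the outside-in induction you discarded (with the first bound supplying the sup-norm factor at each step) closes it and reproduces the stated constants.
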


\begin{proof}{\bf of \Cref{lem:infty_norm_a_plus}}
The inequality \eqref{eq:A_plus_infty_bound} follows from an easy induction in $k$. Indeed, if $k = i$, $\BBB_{i,i} = \requ_{v^{(i)}} \circ W_{i-1}x$, and 
\[
\norm{\BBB_{i,i}}_{\infty} \leq (\Kxnorm p_{i-1}+1)^2 \leq (p_{i-1}+1)^2(\Kxnorm \vee 1)^{2}\,.
\]
Using $\norm{\BBB_{k,i}}_{\infty} \leq (\norm{\BBB_{k-1,i}}_{\infty} p_{k-1}+1)^2$ completes the proof. 
\par 
To prove \eqref{eq:A_minus_lip_bound}, we use an induction in $j$. Assume that \eqref{eq:A_minus_lip_bound} holds for any $k \in \{1,\dots,N+1\}$ and $j - 1 \geq k$. Then
\begin{align*}
\norm{\AAA_{j,k}(x) - \AAA_{j,k}(y)}_{\infty} 
&\leq p_j \norm{\BBB_{j,k}(x) - \BBB_{j,k}(y)}_{\infty} \\
&\leq 2p_{j}\norm{\AAA_{j-1,k}(x) - \AAA_{j-1,k}(y)}_{\infty} \bigl(\norm{\AAA_{j-1,k}(x)}_{\infty} \vee \norm{\AAA_{j-1,k}(y)}_{\infty}\bigr) \\
&\leq 2(p_{j}+1)(p_{j-1}+1) \prod_{\ell=1}^{j-k}(p_{j-\ell-1}+1)^{2^{\ell}}(\Kxnorm \vee 1)^{2^{j-k}} \times \\
&\qquad \norm{\AAA_{j-1,k}(x) - \AAA_{j-1,k}(y)}_{\infty}\,,
\end{align*}
and the statement follows from the elementary bound 
\begin{align*}
\norm{\AAA_{k-1,k}(x) - \AAA_{k-1,k}(y)}_{\infty} \leq p_{k-1}\norm{x-y}_{\infty} \leq p_{k-1}\norm{x-y}_{\infty} (\Kxnorm \vee 1)\,.
\end{align*}
\end{proof}

\begin{proof}{\bf of \Cref{Lem:inf_norm_bound}}
Denote by $\AAA_{j}^{(1)}, \BBB_{j}^{(1)}, \AAA_{j}^{(2)}, \BBB_{j}^{(2)}$ the corresponding functions in \eqref{eq:A_plus_A_minus_def}. Following \citep[Lemma 5]{schmidt-hieber2020}, we write
\begin{align*}
\norm{\fone(x) - \ftwo(x)}_{\infty} 
\leq \sum_{k=1}^{N+1}\norm{\AAA^{(1)}_{k+1} \circ \requ_{\vone_{k}} \circ \Wone_{k-1} \circ \BBB_{k-1}^{(2)}(x) - \AAA^{(1)}_{k+1} \circ \requ_{\vtwo_{k}} \circ \Wtwo_{k-1} \circ \BBB_{k-1}^{(2)}(x)}_{\infty}\,.
\end{align*}
Due to \Cref{lem:infty_norm_a_plus}, functions $\AAA^{(1)}_{k+1} = \AAA^{(1)}_{L,k+1}$ are Lipshitz, and
\begin{align*}
\norm{\fone(x) - \ftwo(x)}_{\infty}
&\leq \sum_{k=1}^{N+1}2^{N-k}\prod_{\ell=0}^{N-k}\bigl(p_{N-\ell} + 1\bigr)^{2^{\ell}}\left(\norm{\BBB^{(2)}_{k}(x)}_{\infty} \vee 1\right)^{2^{N-k}} \times \\
&\quad \norm{\requ_{\vone_{k}} \circ \Wone_{k-1} \circ \BBB^{(2)}_{k-1}(x) - \requ_{\vtwo_{k}} \circ \Wtwo_{k-1} \circ \BBB^{(2)}_{k-1}(x)}_{\infty}
\end{align*}
Note that
\begin{align*}
&\norm{\requ_{\vone_{k}} \circ \Wone_{k-1} \circ \BBB^{(2)}_{k-1}(x) - \requ_{\vtwo_{k}} \circ \Wtwo_{k-1} \circ \BBB^{(2)}_{k-1}(x)}_{\infty} \leq \eps (p_{k-1}+1)\left(\norm{\BBB^{(2)}_{k-1}(x)}_{\infty} \vee 1\right) \times \\
&\qquad \biggl(\norm{\Wone_{k-1} \circ \BBB^{(2)}_{k-1}(x)}_{\infty} + 1 \vee \norm{\BBB^{(2)}_{k}(x)}_{\infty} + 1\biggr) \leq 2\eps (p_{k-1}+1)^2\left(\norm{\BBB^{(2)}_{k-1}(x)}_{\infty} \vee 1\right)^2\,.
\end{align*}
Combining the previous bounds and \eqref{eq:A_plus_infty_bound} yields
\begin{align*}
\norm{\fone(x) - \ftwo(x)}_{\infty} 
&\leq \eps \sum_{k=1}^{N+1}2^{N-k+1}\left\{\prod_{\ell=0}^{N-k}\bigl(p_{N-\ell} + 1\bigr)^{2^{\ell}}\right\} \bigl(p_{k-1} + 1\bigr)^2 \left(\norm{\BBB^{(2)}_{k}(x)}^2_{\infty} \vee 1\right)^{2^{N-k}} \\
&\leq \eps (N+1) 2^{N} \prod_{\ell=0}^{N}(p_{\ell}+1)^{2^{N}}\,,
\end{align*}
and the statement follows.

\end{proof}

\subsection{Proof of \Cref{Lem:inf_norm_bound_derivative}}
\label{sec:inf_norm_bound_derivative}

Note that for $f(x)$ defined in \eqref{eq:f_one_f_two_def}, it holds
\begin{equation}
\label{eq:derivative}
\nabla f(x) = 2^{N} W_{N} \prod_{\ell=1}^{N} \left\{\operatorname{diag}\left[\AAA_{N-\ell,1}(x) + v_{N-\ell+1} \vee 0\right] W_{N-\ell}\right\}\,.
\end{equation}
Let us define for $j \in \{0,\dots,N-1\}$ and $i \in \{1,2\}$ the quantities 
\begin{align*}
\Delta^{(i)}_{j} &= \AAA_{j,1}(x) + v^{(i)}_{j+1} \vee 0\,.
\end{align*}
Note that $\diagone_{j}, \diagtwo_{j} \in \R^{p_{j+1}}$. With the triangular inequality,
\begin{align*}
&\norm{\nabla \fone(x) - \nabla \ftwo(x)}_{\infty} 
\leq 2^{N} \norm{\bigl(\Wone_{N} - \Wtwo_{N}\bigr)\prod_{\ell=1}^{N}\left\{\operatorname{diag}\left[\diagone_{N-\ell}\right] \Wone_{N-\ell}\right\}}_{\infty} + 2^{N}(p_{N}+1) \times \\
&\sum_{k=1}^{N} \norm{\prod_{\ell=1}^{k-1}\left\{\operatorname{diag}\left[\diagone_{N-\ell}\right] \Wone_{N-\ell}\right\}
\left(\operatorname{diag}\left[\diagone_{k}\right] \Wone_{k} - \operatorname{diag}\left[\diagtwo_{k}\right] \Wtwo_{k}\right)
\prod_{\ell=k+1}^{N}\left\{\operatorname{diag}\left[\diagtwo_{N-\ell}\right] \Wtwo_{N-\ell}\right\}}_{\infty}
\end{align*}
Proceeding as in \Cref{Lem:inf_norm_bound}, we obtain 
\begin{align*}
\norm{\operatorname{diag}\left[\diagone_{k}\right] \Wone_{k} - \operatorname{diag}\left[\diagtwo_{k}\right] \Wtwo_{k}}_{\infty} 
&\leq \eps(k+1)2^{k}\prod_{\ell=0}^{k}(p_{\ell}+1)^{2^k} + (p_k+1)\norm{\diagone_{k}}_{\infty} \\
&\leq 2\eps (k+1)2^{k} \prod_{\ell=0}^{k}(p_{\ell}+1)^{2^k}\,,
\end{align*}
and, similarly,
\begin{align*}
\norm{\operatorname{diag}\left[\diagone_{N-\ell}\right] \Wone_{N-\ell}}_{\infty} \leq \prod_{j=1}^{N-\ell}(p_{N-j}+1)^{2^{j}}\,.
\end{align*}
Combining the previous bounds, we obtain
\begin{align*}
\norm{\nabla \fone(x) - \nabla \ftwo(x)}_{\infty} \leq \eps 2^{N}\prod_{\ell=0}^{N}(p_{\ell}+1)^{2^{N}+1} + \eps N(N+1)2^{N}\prod_{\ell=0}^{N}(p_{\ell}+1)^{2^{N+1}+1}\,,  
\end{align*}
and the statement follows.

\section{Proofs of the auxiliary results}

\subsection{Proof of \Cref{lem:Delta_D_lower_bound}}
\label{sec:Delta_D_lower_bound_proof}

	Let $D^*(x) = \pstar(x) / (\pstar(x) + \sfp_w(x))$.
	By the definition of $\JS(\sfp_w, \pstar)$ and $L(w, \theta)$, it holds that
	\begin{align*}
		&
		\JS(\sfp_w, \pstar) - \log 2 - L(w, \theta)
		= \frac12 \int \left[ \pstar(x) \log\left(\frac{D^*(x)}{D_\theta(x)}\right)
		+ \sfp_w \log\left(\frac{1 - D^*(x)}{1 - D_\theta(x)}\right) \right] \rmd \mu
		\\& \qquad
		= \int \left[ D^*(x) \log\left(\frac{D^*(x)}{D_\theta(x)}\right)
		+ (1 - D^*(x)) \log\left(\frac{1 - D^*(x)}{1 - D_\theta(x)}\right) \right] \frac{\pstar(x) + \sfp_w(x)}2 \, \rmd \mu.	
	\end{align*}
	Let $a \in [D_{\min}, D_{\max}]$ and introduce a function
	\[
		h_a(v) = (a + v) \log\left( 1 + \frac va \right) + (1 - a - v) \log\left( 1 - \frac v{1- a} \right).
	\]
	Then it holds that
	\begin{align*}
		\JS(\sfp_w, \pstar) - \log 2 - L(w, \theta)
		&
		= \int h_{D_\theta(x)}(D^*(x) - D_\theta(x)) \frac{\pstar(x) + \sfp_w(x)}2 \, \rmd \mu.
	\end{align*}
	Fix any $a \in [D_{\min}, D_{\max}]$ and prove that $h_a(v) \geq v^2$ for any $v \in [-a, 1 - a]$.
	For this purpose, compute the derivatives of $h_a(v)$:
	\begin{align*}
		&
		h_a'(v) = \log\left( 1 + \frac va \right) - \log\left(1 - \frac{v}{1 - a} \right),
		\\&
		h_a''(v) = \frac1{(a + v)(1 - a - v)} \geq 4
		\quad \text{for all $v \in (-a, 1 - a)$}.
	\end{align*}
	This yields that
	\[
		h_a(v) \geq h_a(0) + h'_a(0) v + 2 v^2 = 2 v^2
		\quad \text{for all $v \in [-a, 1 - a]$.}
	\]
	Hence,
	\begin{align*}
		\JS(\sfp_w, \pstar) - \log 2 - L(w, \theta)
		&
		= \int h_{D_\theta(x)}(D^*(x) - D_\theta(x)) \frac{\pstar(x) + \sfp_w(x)}2 \, \rmd \mu
		\\&
		\geq \int (D^*(x) - D_\theta(x))^2 (\pstar(x) + \sfp_w(x)) \, \rmd \mu.
		\\&
		= \left\| \frac{\pstar}{\pstar + \sfp_w} - D_\theta \right\|_{L_2(\pstar + \sfp_w)}^2.
	\end{align*}
	To prove \eqref{eq:delta_D_upper_bound}, it is enough to show that the following inequality holds for any $a \in [D_{\min}, D_{\max}]$ and any $v \in [-a, 1 - a]$:
    \[
        h_a(v) \leq \frac{C_a^2 v^2}{(C_a - 1)^2 a(1 - a)},
        \quad \text{where} \quad
        C_a = 1 + \sqrt{\frac{a}{(1 - a) \log(1 / (1 - a))} \wedge \frac{1 - a}{a \log(1 / a)}}.
    \]
    Then it is easy to observe that
    \[
        C_a \geq C_{\ref{eq:c_delta}} > 1
        \quad \text{and} \quad
        a(1 - a) \geq D_{\min}(1 - D_{\max})
        \quad \text{for all $a \in [D_{\min}, D_{\max}]$,}
    \]
    which yields
    \[
        h_a(v) \leq \frac{C_{\ref{eq:c_delta}}^2 v^2}{(C_{\ref{eq:c_delta}} - 1)^2 D_{\min}(1 - D_{\max})}
        \quad \text{for all $a \in [D_{\min}, D_{\max}]$ and $v \in [-a, 1 - a]$.}
    \]
    Hence,
    \begin{align*}
		&
        \JS(\sfp_w, \pstar) - \log 2 - L(w, \theta)
		= \int h_{D_\theta(x)}(D^*(x) - D_\theta(x)) \frac{\pstar(x) + \sfp_w(x)}2 \, \rmd \mu
		\\&
		\leq \frac{C_{\ref{eq:c_delta}}^2 }{(C_{\ref{eq:c_delta}} - 1)^2 D_{\min}(1 - D_{\max})} \int (D^*(x) - D_\theta(x))^2 (\pstar(x) + \sfp_w(x)) \, \rmd \mu
		\\&
		= \frac{C_{\ref{eq:c_delta}}^2 }{(C_{\ref{eq:c_delta}} - 1)^2 D_{\min}(1 - D_{\max})} \left\| \frac{\pstar}{\pstar + \sfp_w} - D_\theta \right\|_{L_2(\pstar + \sfp_w)}^2.
	\end{align*}
    Note that, for any $v \in [-a / C_a, (1 - a) / C_a]$, we have
    \[
        h_a''(v)
        = \frac1{(a + v)(1 - a - v)}
        \leq \frac1{(1 - 1/C_a)^2 a(1 - a)}.
    \]
    Since $h_a(0) = h_a'(0) = 0$, this yields that
    \[
        h_a(v) \leq \frac{C_a^2 v^2}{2(C_a - 1)^2 a(1 - a)}.
    \]
    Fix any $v \in [-a, -a / C_a]$ and consider $h_a(v)$. Since $h_a$ decreases on $[-a, 0]$, we obtain that
    \[
        h_a(v)
        \leq h_a(-a)
        = \log\frac1{1 - a}.
    \]
    On the other hand, if $v \in [-a, -a / C_a]$, then $v^2 \geq a^2 / C_a^2$. Taking into account that
    \[
        (C_a - 1)^2 \leq \frac{a}{(1 - a) \log(1 / (1 - a))}
    \]
    due to the definition of $C_a$, we obtain that, for any $v \in [-a, -a / C_a]$, the following inequality holds:
    \[
        \frac{C_a^2 v^2}{(C_a - 1)^2 a (1 - a)}
        \geq \frac{a}{(C_a - 1)^2 (1 - a)}
        \geq \log \frac1{1 - a}
        \geq h_a(v).
    \]
    Similarly, since $h_a$ increases on $[0, 1 - a]$, it holds that
    \[
        h_a(v)
        \leq h_a(1 - a)
        = \log\frac1{a}
        \quad \text{for all $v \in \left[\frac{1 - a}{C_a}, 1 - a \right]$.}
    \]
    At the same time, $v^2 \geq (1 - a)^2 / C_a^2$ for any $v \in [(1 - a) / C_a, 1 - a]$. Using the inequality
    \[
        (C_a - 1)^2 \leq \frac{1 - a}{a \log(1 / a)},
    \]
    which follows from the definition of $C_a$, we obtain that
    \[
        \frac{C_a^2 v^2}{(C_a - 1)^2 a(1 - a)}
        \geq \frac{1 - a}{a (C_a - 1)^2}
        \geq \log \frac1a
        \geq h_a(v),
        \quad \text{for all $v \in \left[\frac{1 - a}{C_a}, 1 - a \right]$,}
    \]
    and the proof is finished.

\subsection{Proof of \Cref{lem:One_Delta_Bernstein}}
\label{sec:One_Delta_Bernstein_proof}

\begin{proof}{\bf of \Cref{lem:One_Delta_Bernstein}}\quad
	The proof of the lemma is quite long, so we split it in several steps.
	
	\medskip
	
	\noindent{\bf Step 1: Bernstein's bound.}\quad
	Let us recall that
	\[
		L_n(w, \theta) = \frac1{2n} \sum\limits_{i=1}^n \log D_{\theta}(X_i) + \frac1{2n} \sum\limits_{i=1}^n \log (1 - D_{\theta}(g_w(Y_i))),	
	\]
	and $L(w, \theta)$ is the expectation of $L_n(w, \theta)$. Assumption \ref{assu:D} yields that the absolute values of the random variables $\log D_{\theta}(X_i)$ and $\log (1 - D_{\theta}(g_w(Y_i))$ do not exceed $3 C_{\ref{eq:C_D_def}} / 2$, where the constant $C_{\ref{eq:C_D_def}}$ is given by \eqref{eq:C_D_def}. Then the Bernstein inequality implies that, for any $\delta \in (0, 1)$, with probability at least $1 - \delta$, we have
	\begin{equation}
		\label{eq:bernstein_1}
		\left| L_n(w, \theta) - L(w, \theta) \right|
		\leq \sqrt{ 2 \Var[L_n(w, \theta)] \log(2/\delta)} + \frac{2 C_D \log(2/\delta)}{3 n}.
	\end{equation}
	It remains to prove that
	\[
		4n \Var[L_n(w, \theta)]
		\leq C_{\ref{eq:c_D_var}} (9 \JS(\sfp_w, \pstar) + \Delta(w, \theta)).
	\]
	\medskip
	\noindent{\bf Step 2a: bounding the variance.}\quad
	The variance of $L_n(w, \theta_w^*)$ satisfies the inequality
	\begin{align*}
		4n \Var[L_n(w, \theta)]
		&
		= \Var\big[\log D_{\theta}(X_i)\big] + \Var\big[\log (1 - D_{\theta}(g_w(Y_i)))\big]
		\\&
		= \Var\big[\log(2 D_{\theta}(X_i))\big] + \Var\big[\log (2 - 2 D_{\theta}(g_w(Y_i)))\big]
		\\&
		\leq  \E \log^2 (2 D_{\theta}(X_1)) + \log^2 (2 - 2 D_{\theta}(g_w(Y_1)))
		\\&
		= \int \left[ \pstar(x) \log^2 (2 D_{\theta}(x))  + \sfp_w(x) \log^2 (2 - 2 D_{\theta}(x)) \right] \, \rmd \mu.
	\end{align*}
	Note that
	\begin{align}
		\label{eq:var_bound}
		 &\notag
		 \int \left[ \pstar(x) \log^2 (2 D_{\theta}(x))  + \sfp_w(x) \log^2 (2 - 2 D_{\theta}(x)) \right] \rmd \mu
		 \\&\notag
		 = \int \frac{\pstar(x)}{\pstar(x) + \sfp_w(x)} \log^2(2 D_{\theta}(x)) \, (\pstar(x) + \sfp_w(x)) \, \rmd \mu
		 \\&\notag\quad
		 + \frac{\sfp_w(x)}{\pstar(x) + \sfp_w(x)} \log^2 (2 - 2 D_{\theta}(x)) \, (\pstar(x) + \sfp_w(x)) \, \rmd \mu
		 \\&
		 = \int \left[ \frac{\pstar(x)}{\pstar(x) + \sfp_w(x)} - \frac12 \right] \log^2 (2 D_{\theta}(x))
		 (\pstar(x) + \sfp_w(x)) \, \rmd \mu
		 \\&\notag\quad
		 + \int \left[ \frac{\sfp_w(x)}{\pstar(x) + \sfp_w(x)} - \frac12 \right] \log^2 (2 - 2 D_{\theta}(x))
		 (\pstar(x) + \sfp_w(x)) \, \rmd \mu
		 \\&\notag\quad
		 + \int \left[ \log^2 (2 D_{\theta}(x)) + \log^2 (2 - 2 D_{\theta}(x)) \right] \frac{\pstar(x) + \sfp_w(x)}2 \, \rmd \mu.
	\end{align}
	
	\medskip
	\noindent{\bf Step 2b: bounding the variance, the first term.}\quad
	Consider the first term in the right-hand side. The Cauchy-Schwarz inequality yields that
	\begin{align*}
		&
		\int \left[ \frac{\pstar(x)}{\pstar(x) + \sfp_w(x)} - \frac12 \right] \log^2 (2 D_{\theta}(x))
		(\pstar(x) + \sfp_w(x)) \, \rmd \mu
		\\&
		\leq \left\| \frac{\pstar}{\pstar + \sfp_w} - \frac12 \right\|_{L_2(\pstar + \sfp_w)}
		\left\| \log^2 (2 D_{\theta}) \right\|_{L_2(\pstar + \sfp_w)}. 
	\end{align*}
	Applying \Cref{lem:js_reg}, we obtain that
	\begin{align*}
		&
		\int \left[ \frac{\pstar(x)}{\pstar(x) + \sfp_w(x)} - \frac12 \right] \log^2 (2 D_{\theta}(x))
		(\pstar(x) + \sfp_w(x)) \, \rmd \mu
		\\&
		\leq 2 \sqrt{\JS(\sfp_w, \pstar)} \left\| \log^2 (2 D_{\theta}) \right\|_{L_2(\pstar + \sfp_w)}.
	\end{align*}
	According to Assumption \ref{assu:D}, $D_{\theta}(x) \in [D_{\min}, D_{\max}] \subset (0, 1)$. The map $g(u) = \log^2(2u)$ is convex on $[D_{\min}, 1]$. Thus, it holds that
	\[
		g(u) \leq g(1/2) + \frac{g(1) (u - 1/2)}{1 - 1/2}
		\quad \text{for all $u \in [1/2, D_{\max}]$}
	\]
	and
	\[
		g(u) \leq g(1/2) + \frac{g(D_{\min}) (u - 1/2)}{D_{\min} - 1/2}
		\quad \text{for all $u \in [D_{\min}, 1/2]$.}
	\]
	This implies that
	\[
		\left\| \log^2 (2 D_{\theta}) \right\|_{L_2(\pstar + \sfp_w)}
		\leq \left( \frac{\log^2(2 D_{\min})}{1/2 - D_{\min}} \vee 2 \log^2 2 \right) \left\| D_{\theta} - \frac12 \right\|_{L_2(\pstar + \sfp_w)}.
	\]
	Hence, we finally obtain that
	\begin{align*}
		&
		\int \left[ \frac{\pstar(x)}{\pstar(x) + \sfp_w(x)} - \frac12 \right] \log^2 (2 D_{\theta}(x))
		(\pstar(x) + \sfp_w(x)) \, \rmd \mu
		\\&
		\leq 2 \left( \frac{\log^2(2 D_{\min})}{1/2 - D_{\min}} \vee 2 \log^2 2 \right) \sqrt{\JS(\sfp_w, \pstar)} \left\| D_{\theta} - \frac12 \right\|_{L_2(\pstar + \sfp_w)}.
	\end{align*}
	
	\medskip
	\noindent{\bf Step 2c: bounding the variance, the second term.}\quad
	For the second term in the right-hand side of \eqref{eq:var_bound} we similarly have
	\begin{align*}
		&
		\int \left[ \frac{\sfp_w(x)}{\pstar(x) + \sfp_w(x)} - \frac12 \right] \log^2 (2 - 2 D_{\theta_w}(x))
		(\pstar(x) + \sfp_w(x)) \, \rmd \mu
		\\&
		\leq 2 \left( \frac{\log^2(2 - 2 D_{\max})}{D_{\max} - 1/2} \vee 2 \log^2 2 \right) \sqrt{\JS(\sfp_w, \pstar)} \left\| D_{\theta} - \frac12 \right\|_{L_2(\pstar + \sfp_w)}.
	\end{align*}
	
	\medskip
	\noindent{\bf Step 2d: bounding the variance, the third term.}\quad
	It remains to bound
	\[
		\int \left[ \log^2 (2 D_{\theta}(x)) + \log^2 (2 - 2 D_{\theta}(x)) \right]
		\frac{\pstar(x) + \sfp_w(x)}2 \, \rmd \mu.
	\]
	Consider the function $h(u) = \log^2(2u) + \log^2(2 - 2u)$, $u \in [D_{\min}, D_{\max}]$. It is easy to observe that $h(1/2) = 0$, $h'(1/2) = 0$, and
	\[
		h''(u)
		= \frac{2 \log(e/(2u))}{u^2} + \frac{2 \log(e/(2 - 2u))}{(1 - u)^2}
		\leq \frac{2 \log(e/(2 D_{\min}))}{D_{\min}^2} + \frac{2 \log(e/(2 - 2D_{\max}))}{(1 - D_{\max})^2}.
	\]
	Hence, it holds that
	\[
		h(u)
		\leq \left( \frac{\log(e/(2 D_{\min}))}{D_{\min}^2}
		+ \frac{\log(e/(2 - 2D_{\max}))}{(1 - D_{\max})^2} \right) \left(u - 1/2 \right)^2
		\quad \text{for all $u \in [D_{\min}, D_{\max}]$,}
	\]
	and, therefore,
	\begin{align*}
		&
		\int \left[ \log^2 (2 D_{\theta}(x)) + \log^2 (2 - 2 D_{\theta}(x)) \right]
		\frac{\pstar(x) + \sfp_w(x)}2 \, \rmd \mu
		\\&
		\leq \left( \frac{\log(e/(2 D_{\min}))}{2 D_{\min}^2}
		+ \frac{\log(e/(2 - 2D_{\max}))}{2(1 - D_{\max})^2} \right)
		\left\| D_{\theta} - \frac12 \right\|_{L_2(\pstar + \sfp_w)}^2.
	\end{align*}
	
	\medskip
	\noindent{\bf Step 3: final part.}\quad
	Steps 2a--d yield that
	\begin{align*}
		4n \Var[L_n(w, \theta)]
		&
		\leq 2 \left( \frac{\log^2(2 D_{\min})}{1/2 - D_{\min}} \vee 2 \log^2 2 \right) \sqrt{\JS(\sfp_w, \pstar)} \left\| D_{\theta} - \frac12 \right\|_{L_2(\pstar + \sfp_w)}
		\\&\quad
		+ 2 \left( \frac{\log^2(2 - 2 D_{\max})}{D_{\max} - 1/2} \vee 2 \log^2 2 \right) \sqrt{\JS(\sfp_w, \pstar)} \left\| D_{\theta_w} - \frac12 \right\|_{L_2(\pstar + \sfp_w)}
		\\&\quad
		+ \left( \frac{\log(e/(2 D_{\min}))}{2 D_{\min}^2}
		+ \frac{\log(e/(2 - 2D_{\max}))}{2(1 - D_{\max})^2} \right)
		\left\| D_{\theta} - \frac12 \right\|_{L_2(\pstar + \sfp_w)}^2.
	\end{align*}
	By the Cauchy-Schwarz inequality,
	\[
		4n \Var[L_n(w, \theta)]
		\leq C_{\ref{eq:c_D_var}} \JS(\sfp_w, \pstar) + C_{\ref{eq:c_D_var}}  \left\| D_{\theta} - \frac12 \right\|_{L_2(\pstar + \sfp_w)}^2,
	\]
	where $C_{\ref{eq:c_D_var}}$ is given by \eqref{eq:c_D_var}.
	Consider the norm $\| D_{\theta_w^*} - 1/2 \|_{L_2(\pstar + \sfp_w)}$. By the triangle inequality, it holds that
	\[
		\left\| D_{\theta_w^*} - \frac12 \right\|_{L_2(\pstar + \sfp_w)}
		\leq \left\| D_{\theta_w^*} - \frac{\pstar}{\pstar + \sfp_w} \right\|_{L_2(\pstar + \sfp_w)}
		+ \left\| \frac{\pstar}{\pstar + \sfp_w} - \frac12 \right\|_{L_2(\pstar + \sfp_w)}.
	\]
	Applying \Cref{lem:Delta_D_lower_bound} and \Cref{lem:js_reg}, we obtain that
	\begin{align*}
		\left\| D_{\theta} - \frac12 \right\|_{L_2(\pstar + \sfp_w)}^2
		&
		\leq \left( \sqrt{\JS(\sfp_w, \pstar) - \log 2 - L(w, \theta)} + 2 \sqrt{\JS(\sfp_w, \pstar)} \right)^2
		\\&
		\leq \left( \sqrt{\Delta(w, \theta)} + 2 \sqrt{\JS(\sfp_w, \pstar)} \right)^2
		\\&
		\leq \Delta(w, \theta) + 8 \JS(\sfp_w, \pstar).
	\end{align*}
	Then it holds that
	\[
		4n \Var[L_n(w, \theta)]
		\leq 9 C_{\ref{eq:c_D_var}} \JS(\sfp_w, \pstar) + C_{\ref{eq:c_D_var}} \Delta(w, \theta),
	\]
	and the claim of the lemma follows.
\end{proof}

\subsection{Proof of \Cref{lem:Delta-Lip}}
\label{sec:Delta-Lip_proof}
	Since the statement of the lemma contains four inequalities, we split the proof in several steps for the sake of clarity.
	
	\medskip
	\noindent
	{\bf Step 1.}\quad
	Prove that, for any $w_1, w_2 \in \Wset$ and $\theta \in \Tset$, we have
	\[
		\left| L_n(w_1, \theta) - L_n(w_2, \theta) \right|
		\leq \frac{\sfl_\G \sfl_\Xset \|w_1 - w_2\|_\infty}{2 - 2D_{\max}} \quad \text{almost surely.}
	\]
	It holds that
	\[
		L_n(w_1, \theta) - L_n(w_2, \theta)
		= \frac1{2n} \sum\limits_{i=1}^n \log \frac{1 - D_\theta(g_{w_1}(Y_i))}{1 - D_\theta(g_{w_2}(Y_i))}.
	\]
	Since the map $x \mapsto \log (1 - x)$ is Lipschitz on $[D_{\min}, D_{\max}]$ with the constant $1/ (1 - D_{\max})$, Assumption \ref{assu:D} yields that
	\begin{align*}
		\left| L_n(w_1, \theta) - L_n(w_2, \theta) \right|
		&
		\leq \frac1{2n} \sum\limits_{i=1}^n \frac{\left| D_\theta(g_{w_1}(Y_i)) - D_\theta(g_{w_2}(Y_i)) \right|}{1 - D_{\max}}
		\\&
		\leq \frac1{2n} \sum\limits_{i=1}^n \frac{\sfl_\Xset \left\| g_{w_1}(Y_i) - g_{w_2}(Y_i) \right\|}{1 - D_{\max}}.
	\end{align*}
	Due to Assumption \ref{assu:G}, we have
	\[
		\left| L_n(w_1, \theta) - L_n(w_2, \theta) \right|
		\leq \frac{\sfl_\G \sfl_\Xset \|w_1 - w_2\|_\infty}{2 - 2D_{\max}} \quad \text{almost surely.}
	\]
	
	\medskip
	\noindent
	{\bf Step 2.}\quad
	Let us show that, for any $w \in \Wset$ and $\theta_1, \theta_2 \in \Tset$, we have
	\[
		\left| L_n(w, \theta_1) - L_n(w, \theta_2) \right|
		\leq \frac{L_\Tset \| \theta_1 - \theta_2 \|_\infty}{D_{\min} \wedge (1 - D_{\max})} \quad \text{almost surely.}
	\]
	It holds that
	\[
		L_n(w, \theta_1) - L_n(w, \theta_2)
		= \frac1{2n} \sum\limits_{i = 1}^n \log \frac{D_{\theta_1}(X_i)}{D_{\theta_2}(X_i)}
		+ \frac1{2n} \sum\limits_{i = 1}^n \log \frac{1 - D_{\theta_1}(g_w(Y_i))}{1 - D_{\theta_2}(g_w(Y_i))}
	\]
	The maps $x \mapsto \log x$ and $x \mapsto \log (1 - x)$ are Lipschitz on $[D_{\min}, D_{\max}]$ with the constants $1 / D_{\min}$ and $1 / (1 - D_{\max})$, respectively. Hence, we have
	\[
		\left| L_n(w, \theta_1) - L_n(w, \theta_2) \right|
		\leq \frac1{2n} \sum\limits_{i = 1}^n \frac{| D_{\theta_1}(X_i) - D_{\theta_2}(X_i) |}{D_{\min}}
		+ \frac1{2n} \sum\limits_{i = 1}^n \frac{| D_{\theta_1}(X_i) - D_{\theta_2}(X_i) |}{1 - D_{\max}}.
	\]
	Then, due to Assumption \ref{assu:D}, it holds that
	\[
		\left| L_n(w, \theta_1) - L_n(w, \theta_2) \right|
		\leq \frac{\sfl_\Tset \| \theta_1 - \theta_2 \|_\infty}{2 D_{\min}}
		+ \frac{\sfl_\Tset \| \theta_1 - \theta_2 \|_\infty}{2 - 2 D_{\max}}
		\leq \frac{\sfl_\Tset \| \theta_1 - \theta_2 \|_\infty}{D_{\min} \wedge (1 - D_{\max})}
	\]
	almost surely.
	
	\medskip
	\noindent
	{\bf Step 3.}\quad
	Finally, due to the Jensen inequality, it holds that
	\begin{align*}
		\left| L(w_1, \theta) - L(w_2, \theta) \right|
		&
		= \left| \E L_n(w_1, \theta) - \E L_n(w_2, \theta) \right|
		\\&
		\leq \E \left| L_n(w_1, \theta) - L_n(w_2, \theta) \right|
		\\&
		\leq \frac{\sfl_\G \sfl_\Xset \|w_1 - w_2\|_\infty}{2 - 2D_{\max}}
	\end{align*}
	and, similarly,
	\begin{align*}
		\left| L(w, \theta_1) - L(w, \theta_2) \right|
		&
		= \left| \E L_n(w, \theta_1) - \E L_n(w, \theta_2) \right|
		\\&
		\leq \E \left| L_n(w, \theta_1) - L_n(w, \theta_2) \right|
		\\&
		\leq \frac{\sfl_\Tset \| \theta_1 - \theta_2 \|_\infty}{D_{\min} \wedge (1 - D_{\max})}.
	\end{align*}

\subsection{Proof of \Cref{lem:js_reg}}
\label{sec:js_reg_proof}

	Let $h(u) = (1 + u) \log(1 + u) + (1 - u) \log(1 - u)$, $u \in [-1, 1]$, and rewrite the Jensen-Shannon divergence between $\sfp$ and $\sfq$ in the following form:
	\begin{align*}
		\JS(\sfp, \sfq)
		&
		= \frac12 \left[ \int \log\left( \frac{2 \sfp(x)}{\sfp(x) + \sfq(x)}\right) \sfp(x) \rmd \mu
		+ \int \log \frac{2 \sfq(x)}{\sfp(x) + \sfq(x)} \sfq(x) \rmd \mu \right]
		\\&
		= \int \Bigg[ \frac{2 \sfp(x)}{\sfp(x) + \sfq(x)} \log \frac{2 \sfp(x)}{\sfp(x) + \sfq(x)} 
		+  \frac{2 \sfq(x)}{\sfp(x) + \sfq(x)} \log \frac{2 \sfq(x)}{\sfp(x) + \sfq(x)} \Bigg] \frac{\sfp(x) + \sfq(x)}4 \rmd \mu 
		\\&
		= \int h\left( \frac{\sfp(x) - \sfq(x)}{\sfp(x) + \sfq(x)} \right) \frac{\sfp(x) + \sfq(x)}4 \rmd \mu.
	\end{align*}
	Note that the function $h(u) / u^2$ is even and increasing on $[0, 1]$. Hence, it attains its maximum on $[-1, 1]$ at the points $u = -1$ and $u = 1$ and its minimum at $u = 0$. This yields that
	\[
		1 = \lim\limits_{v \rightarrow 0} \frac{h(v)}{v^2} \leq \frac{h(u)}{u^2} \leq \frac{h(1)}{1^2} = 2\log 2
		\quad \text{for all $u \in [-1, 1]$.}
	\]
	Since $(\sfp(x) - \sfq(x)) / (\sfp(x) + \sfq(x)) \in [-1, 1]$ for all $x$ from $\text{supp}(\sfp) \cup \text{supp}(\sfq)$, it holds that
	\[
		\frac14 \int \frac{(\pstar(x) - \sfp_w(x))^2}{\pstar(x) + \sfp_w(x)} \rmd \mu
		\leq \JS(\sfp_w, \pstar)
		\leq \frac{\log 2}2 \int \frac{(\pstar(x) - \sfp_w(x))^2}{\pstar(x) + \sfp_w(x)} \rmd \mu.
	\]

\subsection{Proof of \Cref{co:js_metric_holder}}
\label{sec:js_metric_holder_proof}

Fix any $u$ and $v$ from $\Wset$.
The fact that the square root of the $\JS$-divergence is a metric (see, e.g., \citep{endres03}) implies that
\[
	\left| \sqrt{\JS(\sfp_u, \pstar)} - \sqrt{\JS(\sfp_v, \pstar)} \right|
	\leq \sqrt{\JS(\sfp_u, \sfp_v)}
\]
Then the claim of the corollary follows from the inequalities
\begin{align*}
	\JS(\sfp_u, \sfp_v)
	&
	\leq \frac{\log 2}2  \int \frac{(\sfp_u(x) - \sfp_v(x))^2}{\sfp_u(x) + \sfp_v(x)} \rmd \mu
	\\&
    \leq \frac{\log 2}2  \int \left|\sfp_u(x) - \sfp_v(x) \right| \rmd \mu 
	\\&
	\leq \frac{\log 2}2 \left\| \sfp_u - \sfp_v \right\|_{L_\infty(\Xset)}^{1/2} \int \left|\sfp_u(x) - \sfp_v(x) \right|^{1/2} \rmd \mu
    \\&
    \leq \frac{\log 2}2 \left\| \sfp_u - \sfp_v \right\|_{L_\infty(\Xset)}^{1/2} \sqrt{\int \left|\sfp_u(x) - \sfp_v(x) \right| \rmd \mu}
	\\&
    \leq \frac{\sfl_\sfp \log 2}{\sqrt 2} \|u - v\|_\infty^{1/2}
    < \sfl_\sfp \|u - v\|_\infty^{1/2}.
\end{align*}
where we used \Cref{lem:js_reg} and Assumption \ref{assu:p}.

\section{Proofs related to Theorem \ref{th:lower_bound}}
\subsection{Proof of Lemma \ref{lem:van_trees}}
\label{sec:van_trees_proof}

The main ingredient we use is the multivariate van Trees inequality (see \citep{vt68}, p. 
72, and \citep{gl95}).
Choose a density $\lambda$ of a prior distribution on the set of parameters $\{\theta \in 
\R^M : \|\theta\| \leq h \}$ of the form
\[
	\lambda(\theta) = \frac{1}{h^M} \lambda_0\left( \frac{\theta}{h} \right),
\]
where $\lambda_0$ is a smooth density supported on $\B(0, 1) \subset \R^M$.
We write $\E_\lambda$ for the expectation with respect to the density $\lambda$.
Then, for any estimate $\widehat\theta$, it holds that
\begin{equation}
	\label{van_trees}
	\sup\limits_{\|\theta\| \leq h} \E_\theta \|\widehat \theta - \theta\|^2
	\geq \E_{\lambda} \E_{\theta} \|\widehat\theta - \theta\|^2
	\geq \frac{M^2}{n \sum\limits_{j=1}^M \E_\lambda \I_j(\theta) + \J(\lambda)},
\end{equation}
where
\[
	\I_j(\theta)
	= \E_\theta \left( \frac{\partial \log \sfp_{g_\theta}(X)}{\partial \theta_j} \right)^2
\]
is the Fischer information of one observation, $p_{g_\theta}(x)$ is the density of $X$, and
\[
	\J(\lambda)
	= \int\limits_{\|\theta\|\leq h} 
	\sum\limits_{j=1}^M \left(\frac{\partial\lambda(\theta)}{\partial \theta_j}\right)^2 
	\frac{d\theta}{\lambda(\theta)}.
\]
First, let us bound $\J(\lambda)$.
\[
	\J(\lambda)
	= \sum\limits_{j=1}^M \int\limits_{\|\theta\| \leq h} \frac1{h^{2M}} \left( 
	\frac{\partial \lambda_0(\theta/h)}{\partial \theta_j} \right)^2 
	\frac{h^M}{\lambda_0(\theta/h)} d\theta
\]
Substituting $\theta = h \nu$, $\nu \in \B(0, 1)$, we obtain
\[
	\J(\lambda)
	= \sum\limits_{j=1}^M \int\limits_{\|\nu\| \leq 1} h^{-2} \left(\frac{\partial 
	\lambda_0(\nu)}{\partial \nu_j} \right)^2 \frac{d \nu}{\lambda_0(\nu)}.
\]
Since $\int\limits_{\|\nu\| \leq 1} \left(\frac{\partial \lambda_0(\nu)}{\partial \nu_j} \right)^2 
\frac{d \nu}{\lambda_0(\nu)}$ is finite, we conclude that
\[
	\J(\lambda) \lesssim \frac{M}{h^2}.
\]
Now, we focus on $\I_j(\theta)$:
\begin{align*}
	\log \sfp_{g_\theta}(x)
	&
	= \log \det\left( I_{d \times d} + h^{\beta - 1} \sum\limits_{j=1}^M \theta_j \nabla^2 \varphi \left(\frac{x - x_j}h \right) \right).
\end{align*}
For any $j \in \{1, \dots, M\}$ the partial derivative of $\log \sfp_{g_\theta}$ with respect to 
$\theta_j$ is equal to
\[
	\frac{\partial \log \sfp_{g_\theta}(x)}{\partial \theta_j}
	= h^{\beta - 1} \text{Tr} \left( A_\theta(x)^{-1} \nabla^2 \varphi \left(\frac{x - x_j}h \right) 
	\right),
\]
where
\[
	A_\theta(x) = I_{d \times d} + \sum\limits_{j=1}^M \theta_j h^{\beta - 1} \nabla^2 \varphi \left(\frac{x - x_j}h \right).
\]
Consider
\begin{align*}
	&
	\left| \text{Tr} \left( A_\theta(x)^{-1} \nabla^2 \varphi \left(\frac{x - x_j}h \right) \right) 
	\right|
	\\&
	= \left| \text{Tr} \left( \left( I_{d\times d} + h^{\beta - 1} \sum\limits_{k=1}^M \theta_k 
	\nabla^2 \varphi \left(\frac{x - x_k}h \right) \right)^{-1} \nabla^2 \varphi \left(\frac{x - x_j}h \right) \right) \right|.
\end{align*}
If $x \notin \B(x_j, h)$ then $\nabla^2 \varphi ((x - x_j) / h) = O_{d \times d}$ and, consequently, $\text{Tr} ( 
A_\theta(x)^{-1} \nabla^2 \varphi ((x - x_j) / h)) = 0$.
Otherwise, we have $\nabla^2 \varphi ((x - x_k) / h) = O_{d \times d}$ for all $k \neq j$.
Hence,
\begin{align*}
	&
	\left| \text{Tr} \left( \left( I_{d\times d} + h^{\beta - 1} \sum\limits_{k=1}^M \theta_k 
	\nabla^2 \varphi  \left(\frac{x - x_k}h \right) \right)^{-1} \nabla^2 \varphi \left(\frac{x - x_j}h \right) \right) \right|
	\\&
	= \left| \text{Tr} \left( \left( I_{d\times d} + h^{\beta - 1} \theta_j
	\nabla^2 \varphi \left(\frac{x - x_j}h \right) \right)^{-1} \nabla^2 \varphi \left(\frac{x - x_j}h \right) \right) \right|
	\\&
	= \left| \text{Tr} \left( \sum\limits_{k=0}^\infty (h^{\beta - 1} \theta_j)^k
	\left(\nabla^2 \varphi \left(\frac{x - x_j}h \right) \right)^{k+1} 
	\right) \right|
	\\&
	\lesssim \left| \text{Tr} \left( \nabla^2 \varphi \left(\frac{x - x_j}h \right) \right) \right|
	\lesssim d.
\end{align*}
Here we used the fact that $h^{\beta - 1} |\theta_j| \leq h^\beta$ is small, provided that 
the sample size is large enough.
Consider
\[
	\I_j(\theta)
	= \int\limits_{\Xset} \left( h^{\beta - 1} \text{Tr} \left( A_\theta(x)^{-1} \nabla^2 \varphi 
	\left(\frac{x - x_j}h \right) \right) \right)^2 \sfp_{g_\theta}(x) \rmd x.
\]
Since $\varphi$ is supported on $\B(0, 1)$, the last expression is equal to
\begin{align*}
	\I_j(\theta)
	&
	= h^{2\beta - 2} \int\limits_{\B(0, h)} \left( \text{Tr} \left( A_\theta(x)^{-1} 
	\nabla^2 \varphi \left(\frac{x - x_j}h \right) \right) \right)^2 \sfp_{g_\theta}(x) \rmd x 
	\\&
	\lesssim h^{2\beta - 2} d^2 \int\limits_{\B(0, h)} \sfp_{g_\theta}(x) \rmd x
	\lesssim h^{2\beta + d - 2} d^2.
\end{align*}
Here the last inequality follows from \eqref{eq:g_theta_derivative} and \Cref{lem:pmin_pmax}.
Thus, it holds that
\[
    \sup\limits_{\|\theta\| \leq h} \E_\theta \|\widehat \theta - \theta\|^2
    \geq \E_{\lambda} \E_{\theta} \|\widehat\theta - \theta\|^2
    \geq \frac{M^2}{n \sum\limits_{j=1}^M \E_\lambda \I_j(\theta) + \J(\lambda)}
    \gtrsim \frac{M h^2}{n h^{2\beta + d} d^2 + 1}.
\]

\subsection{Proof of Lemma \ref{lem:log_density}}
\label{sec:log_density_proof}
Recall that
\begin{align*}
	\log \sfp_{g_\theta}(x)
	&
	= \log \det\left( I_{d \times d} + h^{\beta - 1} \sum\limits_{j=1}^M \theta_j
	\nabla^2 \varphi \left(\frac{x - x_j}h \right) \right).
\end{align*}
Then, for any $j \in \{1, \dots, M\}$ the partial 
derivative of $\log \sfp_{g_\theta}$ with respect to $\theta_j$ is equal to
\[
	\frac{\partial \log \sfp_{g_\theta}(x)}{\partial \theta_j}
	= h^{\beta - 1} \text{Tr} \left( A_\theta(x)^{-1} \nabla^2 \varphi \left(\frac{x - x_j}h \right) 
	\right),
\]
where
\[
	A_\theta(x) =  I_{d \times d} + \sum\limits_{j=1}^M \theta_j h^{\beta - 1} \nabla^2 \varphi \left(\frac{x - x_j}h \right).
\]
Since, for any $j \in \{1, \dots, J\}$, $\psi_j$ is supported on $\B(0, 1)$ and $\{x_1, \dots, 
x_M\}$ is a $(2h)$-packing on $\Xset$, then, for any $j \in \{1, \dots, J\}$ and for any $x 
\in \B(x_j, h/2)$,
\[
	A_\theta(x) =  I_{d \times d} +  \theta_j h^{\beta - 1} \nabla^2 \varphi \left(\frac{x - 
	x_j}h \right).
\]
Consider $\text{Tr} \left( A_\theta(x)^{-1} \nabla^2 \varphi \left((x - x_j) / h \right) \right)$.
We recall that, by the construction,
\[
	-\nabla^2 \varphi (0)
	\succeq I_{d \times d}.
\]
Since $\varphi \in \H^{\beta+2}(\Xset, L)$, $\beta > 1$ and $r_0$ is such that $r_0 \leq h / (2 H_{\varphi})$,
we have
\[
	- \nabla^2 \varphi \left(\frac{x - x_j}h \right)
	\succeq 0.5 I_{d \times d}, \quad \text{for all } x \in \B(x_j, r_0).
\]
Taking into account that
\[
	\left\| \nabla^2 \varphi \left(\frac{x - x_j}h \right) \right\|
	\leq H_{\varphi}, 
\]
we obtain
\[
	\left\|  \theta_j h^{\beta - 1} \nabla^2 \varphi \left(\frac{x - x_j}h \right) \right\|
	\leq H_{\varphi} h^\beta < 1
	\quad \text{for any $h <  H_{\varphi}^{-1/\beta}$}.
\]
Then
\begin{align*}
	&
	\left( I_{d\times d} + \theta_j h^{\beta - 1} \nabla g_0 (g_0^{-1}(x))  
	\nabla \psi_j \left(\frac{x - x_j}h \right) \right)^{-1} 
	\\&
	= \sum\limits_{k=0}^\infty \theta_j^k  h^{k\beta - k} \left( -\nabla g_0 (g_0^{-1}(x))  
	\nabla \psi_j \left(\frac{x - x_j}h \right) \right)^k.
\end{align*}
This implies
\begin{align*}
	&
	- \text{Tr} \left( \left( I_{d\times d} + \theta_j h^{\beta - 1} \nabla^2 \varphi \left(\frac{x - x_j}h \right) \right)^{-1} \nabla^2 \varphi \left(\frac{x - x_j}h \right) \right)
	\\&
	= \text{Tr} \left( \sum\limits_{k=0}^\infty \theta_j^k  h^{k\beta - k}
	\left( -\nabla^2 \varphi \left(\frac{x - x_j}h \right) \right)^{k+1} \right)
	\\&
	\geq  \sum\limits_{k=0}^\infty \frac{\theta_j^k h^{k\beta - k}}{2^{k + 1}} \text{Tr} (I_{d \times d})
	\geq d/2.
\end{align*}
Then
\[
	\left| \frac{\partial \log \sfp_{g_\theta}(x)}{\partial \theta_j} \right|
	= h^{\beta - 1} \left| \text{Tr} \left( A_\theta(x)^{-1} \nabla^2 \varphi \left(\frac{x - x_j}h 
	\right) \right) \right|
	\geq h^{\beta - 1}d/2
	\quad \text{for all $x \in \B(x_j, r_0)$.}
\]

\end{document}